\documentclass[11pt,reqno]{amsart}

\usepackage{amssymb,amsmath,amscd,amsthm,xspace,color,tocvsec2, array, caption}
\usepackage[mathscr]{euscript}
\usepackage{enumerate}
\usepackage[breaklinks=true]{hyperref}
\usepackage[all, cmtip]{xy}
\usepackage[ampersand]{easylist}
\usepackage{stmaryrd}
\usepackage[myheadings]{fullpage}
\usepackage{csquotes}
\usepackage{mathtools}

\captionsetup[figure]{name=Table}

\newcommand{\cgk}{\widehat{\check{\mathfrak{g}}}_{\check{\kappa}}}

\newcommand{\msf}{\mathsf}

\newcommand{\sH}{\mathscr{H}}

\newcommand{\Res}{\operatorname{Res}}

\newcommand{\aff}{\operatorname{aff}}

\newcommand{\Hom}{\operatorname{Hom}}

\newcommand{\g}{\widehat{\mathfrak{g}}}

\newcommand{\fL}{\mathfrak{L}}

\newcommand{\Gr}{\operatorname{Gr}}

\newcommand{\halpha}{\check{\alpha}}

\newcommand{\ft}{\mathfrak{t}}

\newcommand{\id}{\text{id}}

\newcommand{\fg}{\mathfrak{g}}



\newcommand{\comment}[1]{}

\newcommand{\OO}{\mathscr{O}}

\newcommand{\Sym}{\operatorname{Sym}}

\newcommand{\fb}{\mathfrak{b}}
\newcommand{\gk}{\g_\kappa}
\newcommand{\fn}{\mathfrak{n}}

\newcommand{\on}{\operatorname}

\newcommand{\scc}{\mathscr{C}}

\newtheorem{theo}[subsubsection]{Theorem}
\newtheorem*{theo*}{Theorem}
\newtheorem{lem}[subsubsection]{Lemma}
\newtheorem{lemma}[subsubsection]{Lemma}
\newtheorem{cor}[subsubsection]{Corollary}
\newtheorem{pro}[subsubsection]{Proposition}
\newtheorem{conj}[subsubsection]{Conjecture}

\theoremstyle{remark}

\newtheorem{re}[subsubsection]{Remark}

\newtheorem{example}[subsubsection]{Example}

\numberwithin{equation}{section}

\newcommand{\gmk}{\widehat{\fg}_{-\kappa}}
\newcommand{\ck}{\check{\kappa}}

\newcommand{\cg}{\check{G}}
\newcommand{\sJ}{\mathscr{J}}
\newcommand{\cG}{\check{G}}
\newcommand{\cC}{\mathscr{C}}
\newcommand{\fh}{\mathfrak{h}}

\newcommand{\sW}{\EuScript{W}}

\newcommand{\Vect}{\mathsf{Vect}}

\renewcommand{\mod}{\text{\textendash}\mathsf{mod}}
\newcommand{\Whit}{\mathsf{Whit}}
\newcommand{\QCoh}{\mathsf{QCoh}}
\newcommand{\Rep}{\mathsf{Rep}}
\newcommand{\xar}[1]{\xrightarrow{#1}}
\newcommand{\DGCat}{\mathsf{DGCat}}

\begin{document}

\frenchspacing

\title{Fundamental local equivalences in quantum geometric Langlands}

\author{Justin Campbell, Gurbir Dhillon, and Sam Raskin}

\date{\today}

\begin{abstract}

In quantum geometric Langlands, the Satake equivalence plays
a less prominent role than in the classical theory.
Gaitsgory--Lurie proposed a conjectural substitute, later 
termed the \emph{fundamental local equivalence}. 
With a few exceptions, we prove this 
conjecture and its extension to the affine flag variety by using what amount to 
Soergel module techniques.

\end{abstract}

\maketitle

\setcounter{tocdepth}{1}
\tableofcontents

\section{Introduction}

\subsection{}

In the early days of the geometric Langlands program, experts 
observed that the fundamental objects of study deform over the
space of \emph{levels} $\kappa$ for the reductive group $G$. For example,
if $G$ is simple, this is a 1-dimensional space. 
Moreover, levels admit duality as well: a level $\kappa$ for $G$ gives rise
to a \emph{dual} level\footnote{We include a translation by critical levels
in the definition of $\check{\kappa}$, cf. Section \ref{ss:levels} for our 
conventions on levels.} 
$\check{\kappa}$ for the Langlands dual group $\check{G}$.
This observation suggested the existence of a \emph{quantum} geometric Langlands program,
deforming the usual Langlands program.

The first triumph of this idea appeared in the work of Feigin--Frenkel 
\cite{ff-duality}, where they proved duality of affine $\sW$-algebras:
$\sW_{G,\kappa} \simeq \sW_{\check{G},\check{\kappa}}$. We emphasize that
this result is quantum in its nature: the level $\kappa$ appears.
For the \emph{critical} level $\kappa = \kappa_c$, 
which corresponds to classical geometric Langlands, 
Beilinson--Drinfeld \cite{bdh} used Feigin--Frenkel duality to give
a beautiful construction of Hecke eigensheaves for certain irreducible local 
systems. 

\subsection{} 

The major deficiency of the quantum geometric Langlands was 
understood immediately:
the Satake equivalence is more degenerate. 

For instance, in the classical case where 
$\kappa = \kappa_c$ is critical, 
the compatibility of global geometric Langlands
with geometric Satake essentially characterizes the equivalence.
Concretely, this means that for an 
irreducible $\check{G}$-local system $\sigma$
on a smooth, projective curve, one expects there to exist
a canonical Hecke eigensheaf $\EuScript{A}_{\sigma}$ with
eigenvalue $\sigma$.

This does not hold in the quantum setting. For instance,
if $\kappa$ is \emph{irrational},
then the Satake category is equivalent to 
$\Vect$ and the Hecke eigensheaf condition is vacuous. 
For \emph{rational} $\kappa$, 
one hopes for a neutral
gerbe of irreducible Hecke eigensheaves. This is known
for a torus, but the gerbe is 
not at all canonically trivial, cf. 
\cite{lysenko-torus} Section 5.2. For general reductive 
$G$, we are not aware of any conjecture explicitly 
describing the relevant gerbe.

We remark that quantum geometric Langlands for rational $\kappa$ is 
closely tied to the theory of 
automorphic forms on metaplectic groups, 
cf. \cite{gaitsgory-lysenko}. 
It is well-known that 
Hecke eigenvalues form too coarse a decomposition
in the metaplectic setting. Indeed, already 
in his first announcement \cite{hecke-announcement}
of his eponymous operators,
Hecke himself made this observation.

\begin{displayquote}
\vspace{.1cm}
{\it

Es sei zum Schlau\ss{} noch erw\"ahnt, da\ss{} f\"ur die Formen halbzahlinger Dimension, wie 
die einfachen Thetareihen und deren Potenzprodukte, sich eine \"ahnliche Theorie 
nicht aufbauen l\"a\ss{t}.
Da n\"amlich f\"ur diese die Zuordnung zu einer Stufe und zu einer Kongruenzgruppe 
bekanntlich nicht mehr so einfach wie bei ganzzahliger Dimension ist, so kann 
man die Operatoren $T_m$ nur f\"ur Quadratzahlen 
$m=n^2$ definieren, und man erh\"alt so 
einen Zusammenhang nur zwischen den Koeffizienten $a(N)$ und $a(Nn^2)$.

(In conclusion, it should be mentioned that a similar theory cannot be 
developed for forms of half-integral weight, 
such as simple theta functions and monomials in them. 
Since the assignment of a level and a congruence group is not
as easy as for integer dimensions, 
the operators $T_m$ can only be defined 
for square numbers $m = n ^ 2$, 
and a relationship is obtained only between the 
coefficients $a (N)$ and $a (Nn^2)$.)}

\vspace{.1cm}

\end{displayquote}

In the study of metaplectic automorphic forms, one often 
repeatedly finds the guiding principle: 
\emph{it is more fruitful to study Whittaker coefficients than
Hecke eigenvalues}. Indeed, above Hecke exactly observes
the gap that appears between the two in the metaplectic context,
and that Whittaker coefficients provide the finer 
information. See \cite{gps} for a classical application
of these ideas.

We refer to \cite{ggw} for further discussion 
and more recent perspectives. 

\subsection{}

In the geometric setting, Gaitsgory 
made significant advances in applying
the above perspective.  
In \cite{gtw} and \cite{quantum-langlands-summary},
Gaitsgory, pursuing unpublished ideas he developed jointly with
Lurie, formulated a series of conjectures regarding
$\kappa$-twisted Whittaker $D$-modules on the 
affine Grassmannian $\Gr_G = \fL G/\fL^+ G$ and the affine flag 
variety $\fL G/I$.

 Let $\kappa$ be a {nondegenerate} level for $G$ 
 and let 
 $\check{\kappa}$
denote the dual level for $\check{G}$.
Let $\Whit_{\kappa}^{\on{sph}}$ denote the DG category of 
$\kappa$-twisted Whittaker $D$-modules on $\Gr_G$,
and let $\Whit_{\kappa}^{\aff}$ denote the similar
category for $\fL G/I$. 

{ We write $\cgk$ for the affine Lie algebra associated to $\cg$ and 
$\ck$, and  
let
\[ \cgk\mod^{\fL^+ \check{G}} \quad \text{and} \quad 
\cgk\mod^{\check{I}}.
\]
denote the DG categories of Harish--Chandra modules 
for the pairs $(\cgk, \fL^+ \check{G})$ and $(\cgk, \check{I})$, respectively. 
We refer to Section \ref{s:notation} for further clarification
on the notation.}
\begin{conj}[Gaitsgory--Lurie, 
\cite{gtw} Conjecture 0.10]\label{conj:fle-sph}

There is an equivalence of DG categories
\begin{equation} \label{e:gl} 
\Whit_{\kappa}^{\on{sph}}
\simeq 
\cgk\mod^{\fL^+ \cG}.
\end{equation}

\end{conj}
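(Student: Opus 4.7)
The plan is to reduce the equivalence to a comparison of endomorphism algebras via Soergel-type functors, in the spirit of Soergel's \emph{Endomorphismensatz} and \emph{Struktursatz} for BGG category $\OO$.

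As a preliminary step, I would reduce (\ref{e:gl}) to its Iwahori-level analog $\Whit_{\kappa}^{\aff} \simeq \cgk\mod^{\cI}$. On the Whittaker side, $\Whit_{\kappa}^{\on{sph}}$ arises as $W_f$-invariants in $\Whit_{\kappa}^{\aff}$ via pullback along $\fL G/I \to \Gr_G$; on the Kac-Moody side, $\cgk\mod^{\fL^+ \cG}$ is correspondingly the $W_f$-invariant part of $\cgk\mod^{\cI}$. Working at Iwahori level has the advantage that both sides acquire a rich stratification by standard objects indexed by the extended affine Weyl group, which is the natural combinatorial input for Soergel-style arguments.

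Next, I would construct a Soergel functor on each side, landing in modules over a common algebra. On the Kac-Moody side, the functor is $\RHom(P, -)$ for a suitable projective or tilting generator of a block, with target governed by a quantum version of the affine coinvariant algebra; the computation of $\End(P)$ proceeds via the Feigin--Frenkel center and its deformation. On the Whittaker side, the analogous functor is built from a standard Whittaker sheaf. Both functors should intertwine the monoidal action of the Iwahori-equivariant affine Hecke category, which forces the two endomorphism algebras to coincide and pins down the equivalence up to a check on a single standard object.

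The main obstacle will be the Struktursatz on the Whittaker side: proving that the Whittaker Soergel functor is fully faithful. Classically this relies on translation functors, wall-crossing, and self-duality of tilting objects, and transporting these tools to the $\kappa$-twisted Whittaker setting on $\fL G/I$ requires genuinely new input. I expect the ``few exceptions'' mentioned in the abstract to correspond to small rational levels at which the relevant highest-weight structure degenerates, causing the Struktursatz to fail as stated.
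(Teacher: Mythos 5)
Your plan is in the right family of ideas (Soergel/Fiebig-style combinatorial determination of blocks), and in fact the authors acknowledge in Remark \ref{r:Soergelmethods} that a direct ``$\mathbb{V}$-functor'' argument on the Whittaker side, \`a la Bezrukavnikov--Yun, should work and would even remove the goodness hypothesis. But the paper's actual route is different and worth contrasting with yours. Rather than building a Struktursatz for $\Whit_\kappa^{\aff}$ directly, the paper first proves a \emph{same-side} equivalence $\Whit_{e,\lambda}^{\aff} \simeq \widehat{\fg}_{-\kappa}\mod^{I,\lambda}_\Theta$ (Theorem \ref{t:whit-km-neut}), realized by convolving with an explicit $\sW$-algebra module, and then invokes Fiebig's theorem (Theorem \ref{t:fiebig}) as a black box: an antidominant block of affine Category $\OO$ is determined by the Coxeter system $W_\lambda$ together with the stabilizer subgroup $W_\lambda^\circ$. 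This avoids proving any new fully-faithfulness statement for a Whittaker $\mathbb{V}$-functor. The reduction from spherical/parahoric to Iwahori is also not via ``$W_f$-invariants'': it is carried out at the abelian-categorical level by realizing $D_{-\kappa}(P\backslash \fL G/\fL N^-,\psi)^{\heartsuit,c}$ as a Serre subcategory of the Iwahori version and matching simple objects under the already-constructed Iwahori equivalence (Theorem \ref{t:galu}).

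There is also a genuine gap in your outline as written: you never address how to identify the two sides of Langlands duality combinatorially. Even with a working Struktursatz on both sides, the endomorphism algebras live over dual root data, and one must exhibit an isomorphism of Coxeter systems $W_{\fg,\kappa} \simeq W_{\check{\fg},\check{\kappa}}$ compatible with the dot actions and block decompositions. This is the content of Theorem \ref{t:intw}, which the authors flag as a combinatorial shadow of quantum Langlands duality and which seems not to have appeared before; your proposal would need it (or an equivalent) and currently treats the matching as automatic. Finally, your guess about the ``few exceptions'' is off: the constraint (goodness) is not that the highest-weight structure degenerates, but that one needs to find antidominant weights in the weight lattice with any prescribed finite parabolic stabilizer inside $W_{\fg,\kappa}$ (Proposition \ref{p:adapt}); this fails exactly when the denominator of the level shares a factor with a bad prime of the root system. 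Notably, the Bezrukavnikov--Yun-style approach you propose is precisely what the authors expect would \emph{eliminate} these exceptions, rather than being the source of them.
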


\begin{conj}[Gaitsgory, \cite{quantum-langlands-summary} Conjecture 3.11]\label{conj:fle-tame}

There is an equivalence of DG categories
\begin{equation} \label{e:tfle}
\Whit_{\kappa}^{\aff} 
\simeq 
\cgk\mod^{\check{I}}.
\end{equation}

\end{conj}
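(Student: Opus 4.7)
The strategy is to bootstrap the tame equivalence \eqref{e:tfle} from the spherical equivalence of Conjecture \ref{conj:fle-sph}, using the action of the affine Hecke category on both sides together with a Soergel-module style recognition theorem. Both $\Whit_\kappa^{\aff}$ and $\cgk\mod^{\cI}$ carry natural right convolution actions of a suitably $\kappa$-twisted monoidal DG category $\sH^{\aff}_\kappa$ of Iwahori-biequivariant $D$-modules on $\fL G/I$: on the Kac--Moody side via the Arkhipov--Bezrukavnikov-type action, and on the Whittaker side via the right $I$-action on $\fL G/I$. A preliminary step is to prove an Iwahori-level analog of the FLE for the Hecke categories themselves (amenable to Soergel bimodule techniques, using the spherical FLE as input), identifying $\sH^{\aff}_\kappa$ with its $\check{G}$-side counterpart under the duality $\kappa \leftrightarrow \ck$.

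Next I would produce a distinguished generating object on each side together with a candidate functor $F : \Whit_\kappa^{\aff} \to \cgk\mod^{\cI}$. On the Kac--Moody side the natural choice is a vacuum Wakimoto module $\operatorname{Wak} \in \cgk\mod^{\cI}$; on the Whittaker side it is the pullback $\operatorname{Vac}$ of the spherical vacuum along the natural projection $\Whit_\kappa^{\aff} \to \Whit_\kappa^{\on{sph}}$. Combining the spherical FLE \eqref{e:gl} with the Hecke identification above, one shows that the endomorphism DG algebras of these two objects, computed relative to their respective Hecke actions, agree; the functor $F$ is then characterized as the unique $\sH^{\aff}_\kappa$-linear functor sending $\operatorname{Vac}$ to $\operatorname{Wak}$.

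To show $F$ is an equivalence I would invoke a Soergel-type recognition: if both sides are generated under $\sH^{\aff}_\kappa$ by the chosen object, then fully faithfulness and essential surjectivity of $F$ follow at once from the endomorphism matching of the previous step. Thus the main obstacle lies in establishing this Hecke-generation property. On the Kac--Moody side it amounts to showing that every compact Iwahori-equivariant Harish-Chandra module admits a Wakimoto-type resolution (by Hecke translates of $\operatorname{Wak}$) at a general nondegenerate level $\ck$; on the Whittaker side the analog requires a cell-filtration and cleanness input for Iwahori-Whittaker $D$-modules on $\fL G/I$. Both of these inputs can degenerate at certain exceptional levels, which presumably accounts for the ``few exceptions'' noted in the abstract.
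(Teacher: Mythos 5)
Your proposal takes a genuinely different route from the paper, and unfortunately it contains a critical logical inversion as well as a key unproven step.

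First, the logical order. You take the spherical FLE (Conjecture \ref{conj:fle-sph}) as an input and propose to bootstrap up to the tame version. But in the paper the implication runs the other way: the parahoric (in particular spherical) statement is \emph{deduced} from the Iwahori statement in Section \ref{ss:parahoric}, by realizing both parahoric categories as abelian Serre subcategories of the corresponding Iwahori categories and matching essential images. The spherical FLE at rational levels was not previously known --- only the irrational case was established by Gaitsgory in \cite{gtw} --- so using it as a given renders the argument either circular or restricted to irrational $\kappa$, which is already covered. The paper's actual strategy avoids the Langlands dual Kac--Moody side entirely until the very end: it first identifies each block of $\Whit_\kappa^{\aff}$ with a block of Category $\mathscr{O}$ for $\widehat{\fg}_{-\kappa}$ (the \emph{same} group $G$, not $\check G$), via a Whittaker-to-singular comparison (Theorem \ref{t:whit-km-neut}) built on the affine Skryabin equivalence and an explicit $\sW_{-\kappa}$-module; then it invokes Fiebig's combinatorial rigidity theorem (Theorem \ref{t:fiebig}) together with a new isomorphism of integral Weyl groups $W_{\fg,\kappa}\simeq W_{\check\fg,\check\kappa}$ as Coxeter systems (Theorem \ref{t:intw}) to transport to $\check{\fg}$.

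Second, the central unproven input in your proposal is Hecke-generation on the Kac--Moody side, i.e.\ that a vacuum Wakimoto module generates $\cgk\mod^{\cI}$ under the affine Hecke action, equivalently that compact Iwahori-equivariant modules admit Wakimoto-type resolutions at a general nondegenerate level $\check\kappa$. This is essentially the content of Arkhipov--Bezrukavnikov at critical level, and there is no known proof at general rational $\check\kappa$; in fact at those levels Wakimoto modules are not simple and the combinatorics of their Jordan--H\"older constituents is governed precisely by the singular stabilizers that drive the paper's block analysis. You flag this as the ``main obstacle'' but do not close it, and it is where the argument would fail. Relatedly, the ``few exceptions'' (the non-good levels) are not explained by any degeneration of the two inputs you gesture at --- in the paper they arise from Proposition \ref{p:adapt}, which characterizes exactly when a weight with a prescribed parabolic stabilizer in $W_{\fg,\kappa}$ exists; that has nothing to do with cleanness or with resolutions breaking down, and a bootstrapping argument from the spherical case would a priori produce a different (and unclear) exceptional set. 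Finally, even granting both generation claims, identifying the relative endomorphism DG algebras of your two generators is itself a substantial task that you have not addressed; it is morally equivalent to the comparison of Soergel-type functors that Remark \ref{r:Soergelmethods} sketches as a potential \emph{alternative} to the paper's Fiebig-based route, and it would need the combinatorial duality of Theorem \ref{t:intw} in any case.
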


We now state a preliminary version of our main result, and then discuss the 
hypothesis that appears in it. 

\begin{theo*}\label{t:main}

Suppose that $\kappa$ is 
\emph{good} in the sense of Section \ref{sss:good-defin}.
Then Conjectures \ref{conj:fle-sph} and \ref{conj:fle-tame}
are true.

\end{theo*}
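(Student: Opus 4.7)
The plan is to establish the tame equivalence~(\ref{e:tfle}) first and then obtain the spherical equivalence~(\ref{e:gl}) as a consequence: on each side, the spherical category is recovered from the Iwahori one by taking invariants for the finite Hecke subcategory (equivalently, by $W_f$-averaging), so once~(\ref{e:tfle}) is produced with the correct compatibilities, the spherical case is a formal consequence of matching the two finite Hecke actions.

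For the tame equivalence, the approach is Soergel-theoretic. Both $\Whit_\kappa^{\aff}$ and $\cgk\mod^{\cI}$ carry compatible actions of affine Hecke DG categories (metaplectic twisted on the Whittaker side, $\ck$-twisted on the Kac--Moody side), and in the good range these two monoidal actions should already be identified by known results on affine Hecke categories. Granted this, it suffices to exhibit on each side a compact generator with a Hecke-equivariant identification of its endomorphism DG algebra. The natural candidates are the Iwahori-Whittaker vacuum on the Whittaker side and the dual Wakimoto (or big tilting) module with Iwahori equivariance on the Kac--Moody side. Once the generators and their endomorphisms are matched compatibly, the equivalence follows from Barr--Beck / Morita reconstruction of a compactly generated DG category from a generator.

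The heart of the argument is then the computation of these endomorphism DG algebras. Classically, Soergel identifies $\End$ of the big projective in category $\OO$ with the coinvariant algebra; the affine/quantum analog here is an identification of the $\End$ of each generator with the appropriate affine $\sW$-algebra, realized on the Whittaker side by Drinfeld--Sokolov reduction of $\gk$-modules, and on the Kac--Moody side via the Feigin--Frenkel realization of the center of $\cgk$. The duality $\sW_{G,\kappa} \simeq \sW_{\cg,\ck}$ is precisely the input that matches the two computations, and it already naturally identifies the two monoidal centers acting on the Hecke-equivariant generators.

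The main obstacle, I expect, is the derived computation of these endomorphism algebras and the transfer of Hecke-module structures across the Feigin--Frenkel identification. At the level of underlying graded objects the computation is classical, but controlling the higher $\Ext$ groups -- in particular concentration of the relevant $\Hom$ complexes in cohomological degree zero -- is what actually has to be proved, and it is here that the ``good $\kappa$'' hypothesis plays its role. Outside the good range, the affine Hecke categories and the relevant Wakimoto/Whittaker modules degenerate in ways analogous to root-of-unity phenomena for quantum groups; the generator may fail to be compact or projective, and the Soergel-type functor may fail to be conservative, breaking the reduction to an algebra isomorphism. Under the good hypothesis these pathologies do not occur, and the program above goes through.
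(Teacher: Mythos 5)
Your proposal shares the Soergel-theoretic spirit of the paper but follows a genuinely different route, and several of the steps you take for granted are not available.

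The paper does \emph{not} match two affine Hecke DG category actions and then reconstruct each side from a compact generator via Barr--Beck/Morita. You assume that the metaplectic-twisted affine Hecke category for $G$ and the $\ck$-twisted one for $\cG$ ``should already be identified by known results,'' but no such identification exists in the affine setting; the Soergel-bimodule description of twisted Hecke categories in \cite{ly} is finite-type only, and the paper explicitly flags the extension to affine type as an \emph{open} variant strategy (Remark~\ref{r:Soergelmethods}), not an input. What the paper actually does is: (i) establish a block decomposition of $\Whit^{\aff}_\kappa$ (Theorem~\ref{t:whit-blocks}); (ii) identify each block with a block of affine Category $\OO$ for $\widehat{\fg}_{-\kappa}$ --- i.e.\ for the \emph{same} group $G$, not $\cG$ --- via a $D_{-\kappa}(\fL G)$-equivariant functor built from the affine Skryabin equivalence and a specific $\sW_{-\kappa}$-module (Theorem~\ref{t:whit-km-neut}); (iii) invoke Fiebig's reconstruction theorem \cite{fiebig}, which determines a block of affine $\OO$ as an abelian highest weight category purely from the Coxeter system $W_\lambda$ and the stabilizer $W_\lambda^\circ$; (iv) cross Langlands duality \emph{combinatorially} by proving an isomorphism of Coxeter systems $W_{\fg,\kappa}\simeq W_{\cfg,\ck}$ (Theorem~\ref{t:intw}) and matching stabilizers. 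Your proposed use of Feigin--Frenkel duality $\sW_{G,\kappa}\simeq\sW_{\cg,\ck}$ as the crossing device does not occur in the paper: the $\sW$-algebra enters only to move from Whittaker $D$-modules to Kac--Moody representations for $G$ itself, and the duality from $G$ to $\cG$ is mediated entirely by Fiebig's theorem plus Theorem~\ref{t:intw}.

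Two further concrete mismatches. First, your explanation of where ``good'' enters is not correct. In the paper, $\kappa$ is good precisely when, for every finite parabolic subgroup $W_\circ\subset W_{\fg,\kappa}$, one can (after an integral translate of the level) find a simple Verma module whose highest weight has stabilizer exactly $W_\circ$; Proposition~\ref{p:adapt} shows this is a lattice-in-alcove condition, equivalent to the denominator of $\kappa$ being coprime to the bad primes of Figure~\ref{f:badprimes}. It is not a projectivity or compactness condition on a generator. Second, the paper does not obtain the spherical/parahoric case by $W_f$-averaging or by taking invariants under the finite Hecke subcategory; instead it realizes the parahoric categories as Serre subcategories (at the abelian level) of the Iwahori categories on both sides and matches essential images by classifying which simple objects they contain (Theorem~\ref{t:galu}). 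The averaging you propose would need the equivalence to be Hecke-module-equivariant in a strong sense that the Fiebig-based construction does not provide canonically, so this is also a genuine gap in your outline, not just a stylistic difference.
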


\subsubsection{} Briefly, a level $\kappa$ is good if and only if 
after restriction 
to every simple factor of $\fg$, $\kappa$ is 
either irrational, or a rational level 
whose denominator is coprime to the bad primes of the root system. We recall 
the latter always lie in $\{ 2, 3, 5\}$. For 
$G$ of type $A$, there are no bad primes, so every level is good in this
case. For an explicit description for general $G$, see Figure \ref{f:badprimes}.

%
%

\subsection{Related works} As emphasized by Gaitsgory in the initial papers 
\cite{gtw} and  
\cite{quantum-langlands-summary}, 
Conjectures \ref{conj:fle-sph} and 
\ref{conj:fle-tame} provide quantum analogues of theorems in classical local 
geometric 
Langlands, i.e. for $\kappa$ or $\check{\kappa}$ at the critical level. We 
presently review these statements.  

\subsubsection{}

For $\kappa = \kappa_c$ critical, Conjecture \ref{conj:fle-sph} 
is known from work of Frenkel--Gaitsgory--Vilonen, and
is a variant of the geometric Satake equivalence.

In this case, heuristically we have
$\check{\kappa} = \infty$;
to interpret this carefully, we refer to
\cite{yifei} for details. Standard arguments then give:

\[
\widehat{\check{\fg}}_{\infty}\mod^{\fL^+ \cG} =
\QCoh(\check{\fg}[[t]]dt/\fL^+ \cG) = \QCoh(\mathbb{B} \cG) =
\Rep(\cG).
\]

\noindent Here the action of $\fL^+ \cG$ on 
$\check{\fg}[[t]]dt$ is the gauge action. 

Then by \cite{fgv}, the composition:

\begin{equation}\label{eq:cs}
\Rep(\cG) \to D(\Gr_G)^{\fL^+ G} \to \Whit(\Gr_G)
\end{equation}

\noindent is an equivalence, where the first functor
is the geometric Satake functor \cite{mirkovic-vilonen}
and the second functor is given by convolution
on the unit object of the right hand side.

We emphasize that unlike with the Satake equivalence,
the equivalence $\Rep(\cG) \xar{\sim} \Whit(\Gr_G)$ is
an equivalence of derived categories, not merely
abelian categories. This amounts to the \emph{cleanness}
property of spherical Whittaker sheaves from \cite{fgv}
and the geometric Casselman--Shalika formula from
\emph{loc. cit}.

\begin{re}
That \eqref{eq:cs} is an equivalence is special to 
integral levels. 
That is, at non-integral levels, the spherical Hecke
category produces only a small part of the spherical Whittaker 
category. This failure, especially at rational levels, is 
part of our interest in Theorem \ref{t:main}.
\end{re}

\subsubsection{}
	
	For $\kappa = \kappa_c$ critical, a version of Conjecture 
	\ref{conj:fle-tame} 
	is the main result of \cite{arkhipov-bezrukavnikov}.
	This deep work of Arkhipov--Bezrukavnikov was
	one of the most significant breakthroughs in geometric
	Langlands, and underlies seemingly countless
	advances in the area since. 

\subsubsection{}
By the above discussion, for $\kappa = \kappa_c$, Conjecture \ref{conj:fle-sph}
is a sort of variant of the geometric Satake theorem. One of 
Gaitsgory's key insights is that in quantum geometric Langlands,
Hecke operators play a diminished role, while Conjecture \ref{conj:fle-sph} 
plays the fundamental role that Satake plays in the classical theory.

\subsubsection{}

For $\check{\kappa} = \check{\kappa}_c$ critical, 
Conjecture \ref{conj:fle-sph} 
is the main result of \cite{fg-sph}, where Frenkel--Gaitsgory construct an 
equivalence

\[
\widehat{\check{\fg}}_{crit}\mod^{\fL^+ \check{G}} \simeq 
\QCoh(\on{Op}_G^{\on{unr}}).
\]

\noindent Here the right hand side is the indscheme of
\emph{unramified (or monodromy-free) opers} for 
$G$; this category is the
$\kappa \to \infty$ limit of $\Whit_{\kappa}(\Gr_G)$ as 
above, and is defined in \emph{loc. cit}. For the centrality
of this result in the geometric Langlands program,
see \cite{dennis-laumonconf} Section 11.

Conjecture \ref{conj:fle-tame} in this case is a folklore
extension of the main results of \cite{dmod-aff-flag} and
\cite{fg-fusion}, but whose complete proof 
is not recorded in the literature.

\subsubsection{}

Finally, let us discuss the previously known cases of Conjectures 
\ref{conj:fle-sph} and \ref{conj:fle-tame}. In the original paper \cite{gtw}, 
Gaitsgory proved  Conjecture \ref{conj:fle-sph} for $\kappa$ irrational, and in 
fact a 
stronger version of it, as we presently describe below. As far as we are aware, 
no other cases of the conjectures have been obtained.  

\subsection{Factorization}

In fact, Gaitsgory conjectured more, related to the
factorization of the Beilinson--Drinfeld affine
Grassmannian.

In Conjecture \ref{conj:fle-sph}, he conjectured
an equivalence of \emph{factorization categories}, 
cf. \cite{gtw} and \cite{chiralcats}. 
Similarly, in Conjecture \ref{conj:fle-tame}, it is expected that
the equivalence should be one of factorization
modules for the (conjecturally equivalent) 
factorization categories appearing in 
Conjecture \ref{conj:fle-sph}. 

When $\kappa = \kappa_c$, these goals are 
implicit in the original work. 
In the spherical case, this is spelled out 
in \cite{cpsi} Theorem 6.36.1.
In the Iwahori case, a weak version of the 
compatibility with factorization module structures
was shown in \cite{cpsii} Theorem 10.8.1.

\subsection{The role of this paper}

In the decade since their formulation, Gaitsgory has been 
advancing an ambitious program
to establish the fundamental local equivalences.
We refer to \cite{paris-notes} for an overview
of this project;
\cite{gtw}, \cite{fact9}, and \cite{fact9} for early work on it;
and \cite{fact3}, \cite{fact6},
\cite{fact7}, \cite{fact5}, \cite{fact1}, \cite{fact2}, and \cite{fact8}
for some of his recent advances in this project.

Gaitsgory's program, though still incomplete, represents 
a new paradigm for Kac-Moody algebras, quantum groups, and 
quantum geometric Langlands. It is full of lovely,
innovative constructions and numerous breakthroughs. 
It is also quite sophisticated, as seems always to be the case
when working with factorization algebras.

Our work is not intended to 
supersede the eventual conclusion of Gaitsgory's project.
Rather, we regard the equivalences of Conjectures \ref{conj:fle-sph} and
\ref{conj:fle-tame} (i.e., forgetting factorization) 
to be interesting results in geometric representation theory
and geometric Langlands. 

For example, as discussed above, the $\kappa = \kappa_c$ analogues of our
results include the geometric Casselman--Shalika formula \cite{fgv}
and the deep work of Arkhipov--Bezrukavnikov \cite{ab}. In fact, as 
we hope to 
explain elsewhere, the geometric part of our study of 
$\Whit_{\kappa}^{\on{sph}}$, suitably adapted to the 
function-field setting, should imply some new function-theoretic results on  
the 
metaplectic Casselman--Shalika formula. 

Moreover, while our present results are expected to be interesting outcomes
of Gaitsgory's methods, we find it desirable to have a 
more direct argument.

\subsection{Methods}\label{ss:methods}

Our techniques are remarkably elementary in comparison to 
the above work of Gaitsgory or e.g. \cite{ab}.
Our main input is classical methods developed by Soergel and his
school.




\subsubsection{}

In his initial work \cite{soergel}, Soergel showed that a
block of Category $\EuScript{O}$ for $\fg$ can be reconstructed 
from the Weyl group of $G$.
Fiebig \cite{fiebig} extended this work to Kac--Moody algebras.
As a consequence of Fiebig's work,
the category $\cgk\mod^{\check{I}}$ can be completely recovered
from the combinatorial datum of the root datum of $\cG$ and the level $\ck$.

To prove Conjecture \ref{conj:fle-tame}, we 
provide a similar Coxeter-theoretic 
description of $\Whit_{\kappa}^{\aff}$. 
We do this by relating $\Whit_{\kappa}^{\aff}$ to 
$\widehat{\fg}_{\kappa}\mod^I$,\footnote{In finite type, an analogous
	result appears in Milicic--Soergel \cite{ms}. 
	Their techniques are not available in the affine setting, so our
	methods differ. 
	
	We use the perspective of 
	loop group actions on categories to
	study Kac--Moody representations. We convolve by an explicit object,
	constructed from $\sW$-algebras.
	
	In contrast, in the finite-type setting,
	\cite{ms} relies on good properties
	of Harish--Chandra bimodules with generalized
	central characters. The theory of affine Harish--Chandra 
	bimodules is in its infancy and is
	much more difficult than in finite type.
	As we hope to explain 
	elsewhere, our methods are sufficient to {\em establish} 
	similar properties of a suitable 
	category of Harish--Chandra bimodules in 
	affine type. However, it should also be possible to prove this equivalence 
	directly by a 
Soergel module argument, cf. Remark \ref{r:Soergelmethods}.} which allows us to 
apply Fiebig's results directly to 
$\Whit_{\kappa}^{\aff}$. 

We then prove Conjecture \ref{conj:fle-tame} 
by matching Langlands dual combinatorics. 
Here we draw the reader's attention to 
Theorem \ref{t:intw}, which is
a combinatorial shadow of quantum Langlands duality.

It is striking that these fundamental conjectures of Gaitsgory
have been open for over a decade, but admit a solution that almost
could have been given at the time. 
\begin{re}

In fact, Theorem \ref{t:intw}, combined 
with the description of twisted Hecke categories as Soergel bimodules, obtained 
in finite type recently in \cite{ly}, 
should yield quantum Langlands duality
for affine Hecke categories. 
Therefore, Soergel's methods, as 
applied in our paper, should suffice to 
prove the local quantum geometric 
Langlands correspondence for categorical representations generated by Iwahori 
invariant vectors. 

\end{re}

\begin{re}

Because of our reliance on \cite{fiebig}, our 
construction is a little non-canonical. 
Indeed, in \emph{loc. cit}., 
there is a choice of projective cover of simple
objects. With that said, 
hewing closer to the Koszul dual picture
as in \cite{ly} would provide canonical equivalences.

\end{re}

\begin{re} After completing this paper, we learned of the thesis of Chris Dodd 
\cite{dodd}, which reproves the results of Arkhipov--Bezrukavnikov \cite{ab} by 
a Soergel 
module argument. Our 
argument may be thought of as a quantum deformation of his approach. We thank 
Roman 
Bezrukavnikov for bringing this to our attention. 
\end{re}

\subsection{Comparison}

In short, we relate Langlands dual categories using Fiebig's
combinatorial description of blocks of affine Category $\EuScript{O}$.

Gaitsgory's program compares these categories via a
factorization algebra $\Omega_q$ (and some of its cousins),
which may also be constructed directly from the root datum of
$G$, cf. \cite{fact4}, \cite{paris-notes} and \cite{fact1}.

It would be quite interesting to find a direct relationship between
these two perspectives.

\begin{re}

We highlight one point of departure in our perspective as compared
to Gaitsgory's. At negative levels, our equivalence is $t$-exact
by construction and matches highest weight structures. 
This was previously anticipated in the spherical case by Gaitsgory,
but was ambiguous in the affine case. After we told him about
our results, he found an argument showing 
that a similar property must hold 
for the equivalence he is working on.

In our approach, these properties are key in deducing the
parahoric version of the theorem from the Iwahori version. 

\end{re}

\vspace{2mm}

\noindent {\bf Acknowledgments.} We would like to thank 
D. Ben-Zvi, R. Bezrukavnikov, A. 
Braverman, D. Bump, D. 
Gaitsgory, S. Kumar, S. Lysenko, I. Mirkovic, W. Wang, B. Webster, D. Yang, and 
Z. Yun for 
interest, encouragement 
and helpful 
discussions. 

Part of this work was carried out at MSRI, where
S.R. was in residence. 
In addition, this research was supported in part by 
Perimeter Institute for Theoretical
Physics. Research at Perimeter Institute is supported by the Government of 
Canada through
the Department of Innovation, Science, and Economic Development, and by the 
Province of
Ontario through the Ministry of Research and Innovation.
We thank all these institutions for their 
hospitality. 

\section{Preliminary material} \label{s:notation}

In this section, we collect standard definitions and notation. We invite the reader 
to skip to the next section and refer back as needed. 

\subsection{Notation for groups}Let $G$ be a reductive group over 
$\mathbb{C}$.\footnote{Our arguments
	apply more generally for any split reductive group over a field $k$
	of characteristic $0$. That is, the cohomology that appears is purely de 
	Rham,
	never \'etale or Betti.} 

\subsubsection{}We fix once and for all a pinning $(T,B,\psi)$ of $G$. 
That is, we fix $T \subset B \subset G$
with $T$ (resp. $B$) a Cartan (resp. Borel) subgroup of $G$.
In addition, for $N$ the unipotent radical of $B$,
we fix a nondegenerate character $\psi:N \to \mathbb{G}_a$.

\subsubsection{}

Given the above data, there is a canonically defined 
Langlands dual group $\check{G}$, which
also comes with a pinning. In particular,
we have Cartan and Borel subgroups
$\check{T} \subset \check{B} \subset \check{G}$.
Again, $\check{N}$ denotes the unipotent radical
of $\check{B}$. 

\subsubsection{}

The data $T \subset B \subset G$ determines a Borel
$B^-$ opposite to $B$, so $B^- \cap B = T$. We denote its
radical by $N^-$. The same applies for $\check{G}$.

\subsubsection{}

We denote the appropriate Lie algebras by 
$\fg, \fb, \fn, \ft, \fb^-, \fn^-, \check{\fg}, \check{\fb}, 
\check{\fn}, \check{\ft}, \check{\fb}^-$, and $\check{\fn}^-$.

\subsubsection{}

We write $\check{\Lambda}_G$ for the lattice
of coweights of $G$, i.e., the cocharacter lattice 
of $T$, and $\Lambda_G$ for the lattice of weights
of $G$, i.e., the character lattice of $T$. We denote the root lattice,
i.e., the integral span of the roots by \[Q \subset \Lambda_G.\]
In other words, $Q = \Lambda_{G^{ad}}$ for $G^{ad}$ the
adjoint group of $G$. Similarly, we let  
$\check{Q} \subset \check{\Lambda}_{\check{G}}$ denote the coroot lattice, and 
one has  $\check{Q} = 
\Lambda_{\check{G}^{ad}}$. 
\subsubsection{}
We let $\mathscr{I}$ denote the set of nodes of the
Dynkin diagram of $G$.
For $i \in \mathscr{I}$, the corresponding 
simple roots and coroots are denoted 
\[
\alpha_i \in \Lambda_G \quad \text{and} \quad \halpha_i \in \check{\Lambda}_G. 
\] 

\subsubsection{}\label{ss:involution}

Our choice of pinning defines a standard
involutive anti-homomorphism\footnote{One sometimes
	finds this involution called the Chevalley involution, or the \emph{Cartan} 
	involution,
	but the latter terminology is potentially misleading.}  \[\tau:\fg 
	\xar{\sim} \fg.\]
More precisely, for $i \in \mathscr{I}$, let
$e_i \in \fn$ be the unique vector of weight $\alpha_i$
such that $\psi(e_i) = 1$. 
Then $\tau$ is the unique involution such that
$\tau([x,y]) = -[\tau(x),\tau(y)]$,
$\tau|_{\ft} = \id_{\ft}$, and
\[[e_i,\tau(e_i)] = \check{\alpha_i}, \quad \quad \text{for } i \in 
\mathscr{I}.\]

We observe that $\tau$ lifts to 
an involutive anti-homomorphism on $G$,
which we also denote by $\tau$.

\subsection{Loops and arcs}

\subsubsection{}

For any affine variety $Z$ of finite type, 
we let $\fL Z$ denote its algebraic loop space and
$\fL^+ Z$ denote its algebraic arc space. The former is
an ind-scheme, while the latter is an affine scheme. 
There is a canonical evaluation map $\fL^+ Z \to Z$ given by
evaluation of a jet at the origin.

For $Z = H$ an affine algebraic group, 
$\fL H$ is a group ind-scheme, with $\fL^+H \subset \fL H$ 
a group subscheme.
\subsubsection{}

The Borel subgroup $B \subset G$ defines the \emph{Iwahori}
subgroup \[I := \fL^+ G \times_G B \subset \fL^+ G \subset \fL G.\]
Dually, we a preferred Iwahori subgroup 
$\check{I} \subset \fL \check{G}$. We denote the prounipotent radical of $I$ by 
$\mathring{I}$ and note the canonical isomorphism 
\[
T \simeq I / \mathring{I}.
\]

\subsection{Weyl groups} 

The combinatorics of affine Weyl groups plays an important
role in this paper. We recall notation and fundamental
constructions below.

\subsubsection{}

We let $W_f$ denote the Weyl group of $G$, and remind that
$W_f$ is also the Weyl group of $\check{G}$. 

\subsubsection{}

The extended affine Weyl group of 
$G$ is the semidirect product 
\[ \widetilde{W} \coloneqq W_f \ltimes \check{\Lambda}_G.\]
The subgroup of $\widetilde{W}$ given by 
\[
W \coloneqq W_f \ltimes \check{Q} 
\] 
\noindent is the affine Weyl group, where we remind
that $\check{Q}$ was the coroot lattice.

\subsubsection{}

Let $S_f \subset W_f$ denote the set of simple reflections
$s_i$ for $i \in \mathscr{I}$. 
Let $S \subset W$ denote the union of $S_f$ with 
the set of simple affine reflections as in 
Section 7 of \cite{kitty}. We remind that the
simple affine reflections are indexed by simple
factors of $G$.

The pairs $(W_f,S_f)$ and $(W,S)$ are {Coxeter systems},
i.e., Coxeter groups with preferred choices of simple
reflections.
We remind that the Bruhat order and length function on $W$ 
each extend in a standard way to $\widetilde{W}$. 

%

\subsection{Categories} 

We repeatedly work with DG categories and their 
symmetries. In this setting, we use the following conventions.

\subsubsection{}

We let $\DGCat_{cont}$ denote the symmetric monoidal
$\infty$-category of cocomplete DG categories and
continuous DG functors as 
defined in \cite{grbook} Section I.1.10.
We denote the binary product underlying the symmetric
monoidal structure by $-\otimes -$; this is the
\emph{Lurie tensor product} of \cite{higheralgebra}.

For simplicity, we sometimes refer to $\infty$-categories
as \emph{categories}, and similarly for DG categories.

\subsubsection{}

Given a $t$-structure on a DG category $\cC$, 
we let $\cC^{\leqslant 0}$ and  $\cC^{\geqslant 0}$ denote
the subcategories of connective and coconnective
objects. That is, we use cohomological indexing notations. 

We denote the heart of such a $t$-structure by 
\[\cC^\heartsuit := \cC^{\leqslant 0} \cap \cC^{\geqslant 0}.\]

\subsection{$D$-modules} 

We make essential use of categories of $D$-modules on 
ind-pro-finite-type schemes such as $\fL G$, as developed in \cite{beraldo} 
and \cite{rdm}. For an indscheme $X$, we denote by $D(X)$ what is in {\em loc. 
	cit.} denoted by 
$D^*(X)$. 

\subsubsection{}
By functoriality, $D(\fL G)$ carries a canonical convolution 
monoidal structure. We denote the corresponding 
($\infty$-)category of DG categories 
equipped with an action of $D(\fL G)$ by 
\[
D(\fL G)\mod \coloneqq D(\fL G)\mod(\DGCat_{cont}).
\] 
\noindent We use similar notation for other group  
(ind-)schemes 
such as $\check{I}$ and $\fL N$.

\subsection{Invariants and coinvariants} 

\subsubsection{}

Given a group ind-scheme $H$, and a $D(H)$-module 
$\cC$, we denote its categories of invariants and coinvariants 
respectively by 
\[  \cC^H \coloneqq
\mathsf{Hom}_{D(H)\mod}(\Vect,\cC)
 \quad  \text{and} \quad  
 \cC_H \coloneqq \Vect \underset{D(H)}{\otimes} \cC.
\]
See \cite{beraldo} for further discussion. 

\subsubsection{}

Similarly, for a multiplicative $D$-module $\chi$ on $H$, 
we denote the 
corresponding categories of twisted invariants and twisted coinvariants by 
\[ \cC^{H, \chi} \quad  \text{and} \quad  \cC_{H, \chi}.
\]
Our multiplicative $D$-modules will be obtained
by one of the following two procedures.
\subsubsection{}

First, any character \[\lambda \in 
\Hom(H,\mathbb{G}_m) \otimes_{\mathbb{Z}} \mathbb{C}\] 
determines a character $D$-module $``t^\lambda"$ of $H$.
In this case, we denote twisted
invariants by $\cC^{H,\lambda}$.
We apply this construction particularly for $H = T$ and $H = I$.

\subsubsection{}
Similarly, given an additive character
\[
\psi: H \rightarrow \mathbb{G}_a,
\]
we obtain a character $D$-module 
$``e^\psi"$ on $H$. Here we denote twisted
invariants by $\cC^{H,\psi}$.
We apply this constructions for $H = \fL N$. 

\subsubsection{}

Suppose $H$ is an affine group scheme with 
pro-unipotent radical $H^u$. We suppose that
$H/H^u$ is finite type.

By \cite{beraldo}, the canonical 
forgetful map $\cC^H \rightarrow \cC$ admits a 
continuous right adjoint 
$\on{Av}_*^H$.
Moreover, there is a canonical
equivalence $\cC_H \simeq \cC^H$ fitting into a commutative
diagram
\begin{equation} \label{e:inv1}
\vcenter{\xymatrix{
\cC \ar[d] \ar[dr]^{\on{Av}^H_*} & \\
\cC_{H} \ar[r] & \cC^{H}.
}}
\end{equation}
\noindent The same applies in the presence of a multiplicative
$D$-module $\chi$.

\subsubsection{}

Now suppose $\cC \in D(\fL G)\mod$. 
Let $\psi:\fL N \to \mathbb{G}_a$ denote the Whittaker character of 
$\fL N$. 

In this case, \cite{whit} Theorem 2.1.1 provides
a canonical equivalence
\begin{equation}\label{e:inv2} 
\cC_{\fL N, \psi} \simeq \cC^{\fL N, \psi}.
\end{equation}
\noindent We highlight that this case is more subtle than
that of an affine group scheme considered above. 

\subsubsection{}\label{sss:quot-not}

As a final piece of notation, for $X$ an ind-scheme with 
an action of a group ind-scheme $H$,
we use the notation
\begin{equation} \label{e:notinv}
D(X/H, \chi) \coloneqq D(X)_{H, \chi}.
\end{equation}
\noindent By \cite{rdm} Proposition 6.7.1, this
notation is unambiguous. 

In the setting of either \eqref{e:inv1} or \eqref{e:inv2},
we remark that these coinvariants coincide with invariants. 

\subsection{Levels} \label{ss:levels}

Recall that a {level} $\kappa$ for $G$ is a $G$-invariant
symmetric bilinear form \[\kappa:\Sym^2(\fg) \to \mathbb{C}.\]

\subsubsection{}
We let $\kappa_{\fg,c}$ denote the 
critical level for $G$, i.e., $-\frac{1}{2}$ times the Killing form
of $G$. Where $G$ is unambiguous, we simply write $\kappa_c$.

\subsubsection{}
Suppose $G$ is simple. A level $\kappa$ is rational 
if $\kappa$ is a rational multiple of the Killing form
and irrational otherwise.
We say $\kappa$ is positive if
$\kappa-\kappa_c$ is a positive rational multiple
of the Killing form. We say a level $\kappa$ is 
negative
if $\kappa$ is not positive or critical.
In particular, any irrational level is negative.

For general reductive $G$, we say a level $\kappa$
is rational, irrational, positive,
or negative if its restrictions to each simple factor
are so.

\subsubsection{}
A level $\kappa$ is nondegenerate if $\kappa-\kappa_c$ is 
nondegenerate as a bilinear form. 
For such $\kappa$, the dual level
$\check{\kappa}$ for $\check{G}$ is the unique nondegenerate level
such that the restriction of $\check{\kappa}-\check{\kappa}_{\check{\fg},c}$ 
to $\ft^*$
and the restriction of $\kappa-\kappa_{\fg,c}$ to ${\ft}$ are dual symmetric 
bilinear forms.

\subsubsection{}
For a simple Lie algebra $\mathfrak{g}$, the
basic level $\kappa_{\fg,b} = \kappa_b$ 
is the unique positive level such that the short coroots have squared length 
two, i.e. \[\min_{i \in \mathscr{I}} \kappa_b(\halpha_i,\halpha_i) = 2. \]

\subsection{Twisted $D$-modules} 

Given a level $\kappa$, there is a canonical 
monoidal DG category of 
twisted $D$-modules, $D_\kappa(\fL G)$, see for example
\cite{whit} Section 1.29. We again use the notation
\[ 
D_\kappa(\fL G)\mod \coloneqq D_\kappa(\fL G)\mod(\DGCat_{cont}).
\]

\subsubsection{} We recall that the multiplicative twisting defined by $\kappa$ 
is 
canonically trivialized on $\fL^+ G$ and 
$\fL N$. In particular, for \begin{equation} \label{e:object}\cC \in 
D_\kappa(\fL G)\mod,\end{equation}
we can make sense of invariants and coinvariants of
$\fL^+ G$ and $\fL N$ with coefficients in $\cC$.
The same applies for any subgroup, in particular for
$I \subset \fL^+ G$. Also, the same applies with a twisting,
e.g. in the Whittaker setup for $\fL N$. We remark that the identification 
\eqref{e:inv2}
of Whittaker invariants and
coinvariants \cite{whit} was proved more generally
for $D_{\kappa}(\fL G)\mod$.

\begin{example}

$D_\kappa(\fL G)$ carries commuting actions of 
$D(\fL N)$ and $D(I)$, so following the convention of
Section \ref{sss:quot-not}, we use the notation
\[
D_\kappa(\fL N, \psi \backslash \fL G / I) 
\]
\noindent for the appropriate invariants = coinvariants
category.

\end{example}

\subsection{Affine Lie algebras} 

\subsubsection{}

Let $\fL \fg$ denote the 
Lie algebra of $\fL G$, considered with its natural
inverse limit topology, i.e. \[\fL \fg = \fg\otimes \mathbb{C}(\!(t)\!) = 
\lim_n \fg \otimes \mathbb{C}(\!(t)\!)/t^n \mathbb{C}[\![t]\!].\]
 Given a level $\kappa$, one obtains a continuous 2-cocycle $\fL \fg \otimes 
 \fL 
 \fg \to \mathbb{C}$ given by 
\[
\begin{gathered}
\xi_1 \otimes \xi_2 \mapsto 
\Res \kappa(\xi_1,d\xi_2).
\end{gathered}
\]

\noindent Here $d$ is the exterior derivative
and $\Res$ is the residue. We denote the corresponding central extension by
\[ 0 \rightarrow \mathbb{C} \mathbf{1} \rightarrow \gk \rightarrow \fL 
\fg 
\rightarrow 0.
\]

\subsubsection{}

We denote by $\gk\mod^{\heartsuit}$ the abelian category of 
smooth representations of $\gk$
on which the central element $\mathbf{1}$ acts via the identity. 

We let $\gk\mod$ denote the DG category introduced by
Frenkel--Gaitsgory in Sections 22 and 23 of \cite{fg}. 
This DG category is compactly generated and carries a canonical
$t$-structure with heart $\gk\mod^{\heartsuit}$. However,
there are some non-zero objects in \[\gk\mod^{-\infty} \coloneqq 
\bigcap_n \hspace{1mm} \gk\mod^{\leqslant -n},\]so $\gk\mod$ is not the derived
category of $\gk\mod^{\heartsuit}$.

\subsubsection{}

In Sections 10 and 11 of \cite{mys}, a $D_{\kappa}(\fL G)$-module
structure on $\gk\mod$ was constructed, 
enhancing previous constructions of Beilinson--Drinfeld
and Frenkel--Gaitsgory, cf. \cite{bdh} Section 7 and
\cite{fg2} Section 22.

\subsubsection{} Let $H$ be a sub-group scheme of $\fL^+ G$ of finite 
codimension, and consider the corresponding category of equivariant objects
\[
 \gk\mod^{H}. 
\]
By \cite{whit} Lemma A.35.1, the bounded below category
\[ \gk\mod^{H,+}\]
\noindent canonically identifies with bounded below derived category of its 
heart
$\gk\mod^{H,\heartsuit}$, which are Harish--Chandra modules for the pair $(\gk, 
H)$. Moreover, $\gk\mod^H$ is compactly generated by inductions of finite 
dimensional $H$-modules.

\subsubsection{}
\label{sss:dotaction}
Let $\kappa$ be a level. 
We abuse notation in letting $\kappa$ also denote the map
$\kappa:\ft \to \ft^*$.
 
The dot action of
$\widetilde{W}$ on $\ft^*$ is defined by having $W_f$ act through the usual dot 
action, and $\check{\Lambda}_G$ act through translations via $\kappa - 
\kappa_c$. I.e., writing $\rho$ for the half sum of the positive roots, for any 
$\lambda \in \ft^*$ we have
\[
\begin{gathered}
w \cdot \lambda \coloneqq w(\lambda+\rho)-\rho, 
\quad \quad w \in W_f \subset
\widetilde{W},  \\
\check{\mu} \cdot \lambda \coloneqq 
\lambda+(\kappa-\kappa_c)(\check{\mu}), 
\quad\quad  \check{\mu} \in \check{\Lambda} \subset 
\widetilde{W}.
\end{gathered}
\]
\subsubsection{} To discuss integral Weyl groups, we need some more standard 
facts about this dot action. Recall that the affine real coroots of $\gk$ are a 
subset of 
$\ft \oplus \mathbb{C} \mathbf{1}$. In particular, to a such a coroot 
$\halpha$, we may associate its classical part $\halpha_{cl}$, i.e. its 
projection to $\ft$. This is a coroot of $\fg$, and in particular we may 
associate a classical root $\alpha_{cl} \in \ft^*$.

\subsubsection{}\label{sss:affcoroots} Via a standard construction, $\halpha$ 
acts as an affine 
linear functional on $\ft^*$, which we denote by $\langle \halpha, -\rangle$,  
in such a way that, 
writing $s_{\halpha}$ for the associated reflection in $\widetilde{W}$, one has 
\[
 s_{\halpha} \cdot \lambda = \lambda - \langle\halpha, \lambda + \rho \rangle 
 \alpha_{cl}, \quad \quad \lambda \in \ft^*. 
\]
Briefly, this arises via restricting a linear action of $\widetilde{W}$ on 
$(\ft \oplus \mathbb{C} \mathbf{1})^*$ to the affine hyperplane of functionals 
whose pairing with $\mathbf{1}$ is 1. We refer the reader to 
Sections 3.1 and 
3.4 of \cite{lpw}, where this is reviewed in greater detail.

\section{Fundamental local equivalences} \label{s:fle}

In this section, we suppose $G$ is simple of adjoint type
and $\kappa$ is a negative level for $G$. Under these assumptions,
we prove the main theorem, i.e., the fundamental local
equivalence for good $\kappa$. In Appendix \ref{a:reduc}, we deduce the 
same result for general $G$ and general good $\kappa$ from this case.

\subsection{Overview of the argument}\label{ss:overview}

The proof of the main theorem requires fine arguments involving
combinatorics of affine Lie algebras. To help the reader understand what
follows, we begin with an overview
of the main ideas. 
This inherently requires referring to concepts that have not been
introduced yet, so the reader may safely skip this material and
refer back as necessary.

We omit some technical considerations at this point
in the discussion. For example, we do not carefully
distinguish here between abelian and derived
categories.

\subsubsection{}

Suppose $\lambda \in \ft^{*}$ is a weight of $\fg$.
Let us denote the block of Category $\OO$ for $\gk$ containing the Verma module 
$M_\lambda$ by 
\[\OO_{\kappa,\lambda} \subset 
\widehat{\fg}_{\kappa}\mod^{\mathring{I}}.\]
\noindent In Section \ref{sss:intweylgp}, we recall that $\lambda$ determines a 
subgroup
$W_{\lambda} \subset W$, its integral Weyl group.\footnote{In spite of 
the notation, $W_{\lambda}$ depends also on $\kappa$.} This is a 
Coxeter group, i.e. comes equipped with a set of simple reflections.

We make important use of Theorem \ref{t:fiebig}, which is due to 
Fiebig \cite{fiebig}, following earlier
work of Soergel \cite{soergel}. 
This result asserts that 
if $\lambda$ is antidominant\footnote{In the sense of affine Kac--Moody
algebras. In particular, the definition depends on $\kappa$.} 
$\OO_{\kappa,\lambda}$ is 
determined as a category by the data of (i) the Coxeter group $W_{\lambda}$ 
along with (ii) the subgroup 
\begin{equation}
\label{e:sfd} W_\lambda^\circ \subset W_\lambda\end{equation} 
stabilizing $\lambda$ under the 
dot action of $W$ on $\ft^*$, cf. 
Section \ref{sss:dotaction}.

\begin{re}

For us, the most important case is when $\lambda$ is integral. 
Here we write $W_{\fg,\kappa}$
in place of $W_{\lambda}$. 
In this case, $W_{\fg,\kappa}$ contains
the finite Weyl group $W_f$. 
If $\kappa$ is irrational, the two are equal, and 
the simple reflections in $W_{\fg,\kappa} = W_f$ 
are the usual ones determined by our fixed Borel.
If $\kappa$ is rational, there is one additional simple reflection;
we provide an explicit formula for it in 
Lemma \ref{l:simprefs}.

\end{re}

\subsubsection{}

In Theorem \ref{t:whit-blocks}, we find a block decomposition 
for the Whittaker category $\Whit_{\kappa}^{\aff}$ 
on the affine flag variety for $G$.
These blocks are indexed by $W_{\fg,\kappa}$-orbits in 
$\check{\Lambda}_G = \Lambda_{\check{G}}$. 

In Theorem \ref{t:whit-km-neut}, we show that up to varying
$\kappa$ by an integral translate (which does not affect the Whittaker
category), its neutral block is equivalent to an integral block of
Category $\OO$ for $\widehat{\fg}_{-\kappa}$. 
We generalize this to general blocks at good\footnote{In our actual exposition, 
the definition of good level is essentially rigged so such a result holds. 
The content is rather in Proposition \ref{p:adapt}, which provides a concrete 
description of good levels.} 
levels in Corollary \ref{c:wtokm}. These identifications preserve
the natural highest weight structures on both sides. From now on, we assume 
that $\kappa$ is good.

\subsubsection{}

For each block of $\Whit_{\kappa}^{\aff}$, we explicitly compute
the combinatorial datum \eqref{e:sfd} of the corresponding block of 
Category $\OO$ for the corresponding Kac--Moody 
algebra\footnote{Again,
this Kac--Moody algebra is essentially $\widehat{\fg}_{-\kappa}$, 
except that we may need to replace $-\kappa$ by an integral translate.}
and provide a form of Langlands duality for this datum. 

Essentially by construction, for any such block the corresponding integral 
Weyl group is $W_{\fg,\kappa}$, with simple
reflections as indicated above. In Theorem \ref{t:intw}, we construct
an isomorphism $W_{\fg,\kappa} \simeq W_{\check{\fg},\check{\kappa}}$
preserving simple reflections, and 
that is the identity on $W_f$.\footnote{We are not aware of the identification
of Theorem \ref{t:intw} having appeared previously in the literature.} 

Note that for any block of $\OO$ for the Kac--Moody algebra, the
corresponding integral Weyl group canonically acts on the set
of isomorphism classes of simple objects in this block. Therefore,
the above considerations provide an action of $W_{\check{\fg},\check{\kappa}}$
on the set of isomorphism classes of simple objects in 
$\Whit_{\kappa}^{\aff}$, which is canonically
identified with $\check{\Lambda}_G = \Lambda_{\check{G}}$. 
Again, by construction, this action coincides with the dot action of
Section \ref{sss:dotaction}, but for $\check{G}$ rather than $G$.


In Corollary \ref{c:minel} and in the proof of 
Theorem \ref{t:fle-tame},
we check that simple objects of 
$\Whit_{\kappa}^{\aff}$ corresponding to
antidominant weights of $\check{G}$ are also standard objects for
the highest weight structure on this category. 
These observations amount to 
matching the data \eqref{e:sfd} with that of affine Category $\OO$ for
$\check{G}$, completing the proof of Theorem \ref{t:main} in
the Iwahori case. 

\subsubsection{}

In Section \ref{ss:parahoric}, under the hypotheses of this
section, we deduce the parahoric version of
Theorem \ref{t:main} from the Iwahori version.
In particular, this includes the spherical version of the theorem. 

We do this by identifying the parahoric categories as full\footnote{This 
fully faithfulness only holds at the abelian categorical level.}
subcategories of the corresponding Iwahori categories and then identifying the
essential images under the isomorphism of Theorem \ref{t:fle-tame}.

\subsection{Block decomposition for the Whittaker category}

We begin by decomposing the Whittaker category into 
blocks.  

\subsubsection{}To do so, it will be useful to simultaneously consider the case 
of twisted Whittaker categories. Thus, we fix $\lambda \in \ft^*$, and 
consider

\[
\Whit_{\lambda}^{\aff} \coloneqq 
D_{\kappa}(\fL N,\psi \backslash \fL G / I,-\lambda).
\]
While these DG categories depends on both $\lambda$ and $\kappa$, we will study 
them with fixed $\kappa$ and varying $\lambda$, and for this reason suppress 
$\kappa$ for ease of notation. 


\subsubsection{}\label{sss:indexing}

For indexing reasons, it will be convenient to rewrite this category  
as follows. Consider the automorphism of $\fL G$ given by 
\[
g \mapsto \tau(g) \check{\rho}(t^{-1}),
\]
where $\tau$ is as in Section \ref{ss:involution}. This induces an equivalence
\begin{equation} \label{e:condone}
D_\kappa(\fL N, \psi \backslash \fL G / I,-\lambda) \simeq 
D_{-\kappa}(I,\lambda \backslash \fL 
G /\fL N^-, \psi)
\end{equation}
where the $\psi$ on the right-hand side of \eqref{e:condone} denotes a 
nondegenerate character of $\fL N^-$ of 
conductor one. In the 
following discussion, we will think of $\Whit_{\lambda}^{\aff}$
via the latter expression.

\subsubsection{} \label{sss:intweylgp}It will be important for us to consider 
$\Whit_{\lambda}^{\aff}$ as a module for the following version of the
affine Hecke category. 

Let $W_\lambda$ denote the integral Weyl group of $\lambda$.\footnote{Later in 
the paper, the twist will be fixed to $\lambda = 0$, 
but the group and level will vary. We will accordingly denote this integral 
Weyl group by $W_{\fg, -\kappa}$ instead. We hope this does not cause 
confusion, and will reintroduce this change in notation when it first occurs.} 
Recall 
this 
is the subgroup of $W$ generated the reflections $s_{\halpha}$ 
corresponding to affine coroots $\halpha$ satisfying \[\langle \halpha, \lambda 
\rangle \in 
\mathbb{Z},\]where the pairing is via the level $-\kappa$ action as in 
Section \ref{sss:affcoroots}. For $w \in W_\lambda$, write $j_{w, !*}$ for the 
intermediate 
extension of the 
simple object of 
\[
D_{-\kappa}(I, \lambda \backslash IwI / I, -\lambda)^\heartsuit.
\]
We define $\sH_\lambda$ to be the full subcategory 
\begin{equation} \label{e:heckealg} 
D_{-\kappa}(I, \lambda \backslash \fL G / 
I, -\lambda)
\end{equation}
generated under colimits and shifts by the objects
\[ j_{w, !*}, \quad \text{for} \quad  w \in W_\lambda.\]

By Section 4 of \cite{ly},\footnote{While {\em loc. cit.} is written ostensibly 
over a finite field and in the finite type setting, the arguments we cite from
it straightforwardly adapt to the present situation.} $\sH_\lambda$ is closed 
under convolution,
so admits a unique monoidal DG structure for which its embedding
into $\eqref{e:heckealg}$ is monoidal. In addition, \emph{loc. cit}. 
shows that $\sH$ is the neutral block of 
$\eqref{e:heckealg}$, i.e., it is the minimal direct summand containing the 
identity element.

\subsubsection{}
\label{sss:iwd}

Recall that the Iwasawa decomposition provides $\fL G$ with
a stratification by the double cosets
\[I w \fL N^-, \quad \quad \text{for}  \quad w \in \widetilde{W}.\] 
Let $\widetilde{W}^f \subset \widetilde{W}$ denote the subset
of elements of minimal length in their left $W_f$-cosets. We now describe the 
affine Whittaker category for a single stratum.

\begin{lemma} For $w \notin \widetilde{W}^f$, the affine Whittaker category 
on 
$Iw \fL N^-$ vanishes, i.e. we have %
	\[ D_{-\kappa}(I, \lambda \backslash I w \fL N^-/\fL N^-, \psi) \simeq 0.
	\]
	For $w \in \widetilde{W}^f$, $!$-restriction to any closed point gives an 
	equivalence
	\[
	D_{-\kappa}(I, \lambda \backslash I w \fL N^- / \fL N^-, \psi) \simeq \Vect.
	\]
\end{lemma}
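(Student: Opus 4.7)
The plan is to realize each stratum $Iw\fL N^-$ as a single $I \times \fL N^-$-orbit under $(i,n)\cdot g = ign^{-1}$, and compute the twisted equivariant category via its stabilizer.

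Unwinding, the stabilizer of $w \in \fL G$ is identified with
\[
H_w := \fL N^- \cap w^{-1} I w, \qquad n \mapsto (wnw^{-1},\, n).
\]
Since $wH_w w^{-1}$ is a pro-unipotent subgroup of $I$, it lies in the pro-unipotent radical $\mathring{I}$. As $\lambda$ factors through $T = I/\mathring{I}$, its pullback to $H_w$ is trivial, so the twisted equivariant category on the orbit identifies with
\[
\Vect^{H_w,\, \psi^{-1}|_{H_w}}.
\]
The statement then reduces to showing that $\psi|_{H_w}$ is nontrivial when $w \notin \widetilde{W}^f$, and trivial when $w \in \widetilde{W}^f$. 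In the trivial case the invariants identify with $\Vect$ (via $!$-restriction at the closed point $w$); in the nontrivial case the invariants by a pro-unipotent group with a nontrivial character vanish.

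I would verify the dichotomy by decomposing $H_w$ along affine real root subgroups: $U_\beta \subset H_w$ precisely for those affine real roots $\beta$ with negative finite part (so $U_\beta \subset \fL N^-$) and with $w\beta$ a positive affine root (so $U_\beta \subset w^{-1}Iw$). The conductor-one Whittaker character $\psi$ is nontrivial on a specific set of ``simple'' affine root directions in $\fL N^-$, and trivial on a cofinal pro-unipotent subgroup. The condition $w \in \widetilde{W}^f$, equivalent to $w^{-1}\alpha_i > 0$ as an affine root for every $i \in S_f$, is exactly what excludes any Whittaker-active root subgroup from appearing in $H_w$; conversely, an $i \in S_f$ with $w^{-1}\alpha_i < 0$ produces such a subgroup and makes $\psi|_{H_w}$ nontrivial.

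The main obstacle is bookkeeping: identifying the Whittaker-active directions in $\fL N^-$ precisely in terms of $S_f$, and choosing pro-unipotent filtrations of $H_w$ compatible with conjugation by $w$ and with the conductor-one character. Once this combinatorial matching is carried out, the lemma follows immediately from the pro-unipotent Mackey-type analysis above.
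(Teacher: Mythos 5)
Your approach is essentially the same as the paper's: stratum-by-stratum, compute the stabilizer $H_w = \fL N^- \cap w^{-1}Iw$ and decide whether the Whittaker character survives restriction to it, using the standard Mackey-type fact that a pro-unipotent stabilizer with trivial (resp. nontrivial) character gives $\Vect$ (resp.\ $0$). The paper's proof is a compressed version of exactly this argument. Your observations that $wH_ww^{-1}$ lands in $\mathring{I}$ (so $\lambda$ restricts trivially), and that $U_\beta \subset H_w$ precisely for $\beta$ with negative finite part and $w\beta > 0$, are correct. (You should also remark that the $\kappa$-twisting itself trivializes on each orbit, as the paper does, but that is routine.)

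However, the final combinatorial matching in your sketch contains a real error, and since the lemma \emph{is} that matching, it needs fixing. You assert that $w\in\widetilde{W}^f$ is equivalent to $w^{-1}\alpha_i > 0$ for all $i\in S_f$. With the paper's convention, $\widetilde{W}^f$ is the set of \emph{minimal length representatives of the cosets $wW_f$} (cf.\ the use of $\widetilde{W}/W_f$ and the factorization $w = w^f w_f$ in the proof of Theorem \ref{t:whit-blocks}), so the condition is $w\alpha_i > 0$ for all $i$, \emph{not} $w^{-1}\alpha_i > 0$. This is more than a notational slip, because your own root-subgroup analysis gives the correct condition on the other side: the conductor-one character $\psi$ is nontrivial exactly on the directions $\fg_{-\alpha_i}\otimes t^0$ (a point you leave unpinned in the sketch), and $U_{(-\alpha_i,0)}\subset H_w$ if and only if $w(-\alpha_i,0) > 0$, i.e.\ $w\alpha_i < 0$. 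So $\psi|_{H_w}$ is trivial if and only if $w\alpha_i > 0$ for all $i$. Your two conditions --- $w\alpha_i>0$ from the stabilizer analysis, and $w^{-1}\alpha_i>0$ from the claimed characterization of $\widetilde{W}^f$ --- do not agree; for instance $w = t^{\check\lambda}$ with $\check\lambda$ antidominant and nonzero lies in $\widetilde{W}^f$ and has $\psi|_{H_w}$ trivial, yet $w^{-1}\alpha_i = (\alpha_i,\langle\alpha_i,\check\lambda\rangle)$ is negative whenever $\langle\alpha_i,\check\lambda\rangle<0$. Once you replace $w^{-1}\alpha_i>0$ by $w\alpha_i>0$ (and record that the Whittaker-active roots for the conductor-one character are precisely $(-\alpha_i,0)$, $i\in S_f$), the dichotomy closes up and the proof is complete.
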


\begin{proof}The twistings corresponding to both $\kappa$ and $\lambda$
	are trivializable on a double coset. Moreover, $\psi$ is trivial
	on the stabilizer in $\fL N^-$ of the point
	$w \in I\backslash\fL G$ if and only if $w \in \widetilde{W}^f$, which 
	implies the desired identities. 
\end{proof}

%
%

For $w \in \widetilde{W}^f$ we denote the corresponding standard, simple, 
and 
costandard 
objects of $\Whit_{\lambda}^{\aff}$ by 
\[ j_{w, !}^\psi, \quad  
j_{w, !*}^\psi, \quad \text{and} \quad j_{w, *}^\psi.\]
Explicitly, under the identification with 
$\Vect$ above, they correspond to the relevant
extensions of $\mathbb{C}[-\ell(ww_{\circ})] \in \Vect$,
where $\ell$ denotes the length function on $\widetilde{W}$, and $w_\circ$ 
denotes the 
longest element of $W_f$. 

\subsubsection{}We next obtain the block decomposition of 
$\Whit_{\lambda}^{\aff}$. To state it, for any double coset $$W_\lambda y 
W_f$$we write 
$\Whit_{\lambda,y}^{\aff}$ 
for the full subcategory of $\Whit_{\lambda}^{\aff}$ generated under colimits 
and shifts by the objects 
\[
j_{x, *}^\psi, \quad \quad \text{for } x \in W_\lambda y W_f 
\hspace{1mm}\cap \hspace{1mm} 
{\widetilde{W}^f}.
\]

\begin{theo}\label{t:whit-blocks} Each $\Whit_{\lambda, y}^{\aff}$ is preserved 
by the 
	action of $\sH_\lambda$, and the direct sum of inclusions yields an 
	$\sH_\lambda$-equivariant equivalence 
	\begin{equation} \label{e:blockwhit}
	\underset{ y \in W_\lambda \backslash \widetilde{W} / W_f }
	{\bigoplus} \Whit_{\lambda, y}^{\aff} 
	\to \Whit_\lambda^{\aff}.
	\end{equation}
\end{theo}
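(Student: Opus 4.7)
The statement has two parts: the $\sH_\lambda$-stability of each block $\Whit_{\lambda, y}^{\aff}$, and the direct-sum decomposition \eqref{e:blockwhit}. My plan attacks each separately.

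For $\sH_\lambda$-stability, the strategy is to reduce to the action of generators on generators. Since $\sH_\lambda$ is generated under colimits, shifts, and monoidal convolution by the simple perverse sheaves $j_{s,!*}$ for $s$ a simple reflection of the Coxeter group $W_\lambda$, and each $\Whit_{\lambda, y}^{\aff}$ is generated under colimits and shifts by $\{j_{x,*}^\psi : x \in W_\lambda y W_f \cap \widetilde{W}^f\}$, it is enough to verify that $j_{s,!*} \star j_{x,*}^\psi \in \Whit_{\lambda, y}^{\aff}$. Since convolution is computed as pushforward from a twisted product along multiplication, and since the Bruhat cell product in $\fL G$ satisfies $IsI \cdot IxI \subseteq IsxI \cup IxI$, the support of the convolution lies in the two Iwasawa strata indexed by $sxW_f$ and $xW_f$. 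As $s \in W_\lambda$, both cosets belong to the single double coset $W_\lambda y W_f$; a standard filtration argument along the stratification presents the convolution as an extension of standards $j_{x',*}^\psi$ with $x' \in W_\lambda y W_f \cap \widetilde{W}^f$, and hence puts it in $\Whit_{\lambda, y}^{\aff}$.

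For the decomposition, essential surjectivity is immediate: every simple $j_{x,!*}^\psi$ lies in a unique $\Whit_{\lambda,y}^{\aff}$, and collectively these compactly generate $\Whit_\lambda^{\aff}$. Full faithfulness reduces by compact generation to semi-orthogonality, namely $\Hom^*(j_{x,*}^\psi, j_{x',!*}^\psi) = 0$ whenever $x$ and $x'$ lie in distinct double cosets in $W_\lambda \backslash \widetilde{W}/W_f$. My plan for the orthogonality is to produce, via convolution with Wakimoto-type objects $J^{\check\mu}$ parametrized by $\check\mu \in \check\Lambda_G$, a family of commuting invertible endofunctors of $\Whit_\lambda^{\aff}$ that act on each standard $j_{x,*}^\psi$ by an explicit monodromy scalar. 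The resulting character on $\check\Lambda_G$ depends on $x$ only through its image in $W_\lambda \backslash \widetilde{W}/W_f$ (precisely because the integrality relations defining $W_\lambda$ render this character $W_\lambda$-invariant under the dot action of Section \ref{sss:dotaction}), and distinct double cosets yield distinct characters. The induced diagonal action of the corresponding central algebra on Hom-spaces between standards then forces the required vanishing.

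The main obstacle will be the orthogonality step. Constructing the Wakimoto objects and computing their monodromy in the twisted, affine Whittaker setting requires genuine care: one must check that the characters they produce separate full double cosets rather than only left $W_\lambda$-cosets, and that the induced action on Hom-spaces is genuinely diagonal. If needed, a fallback is to exploit the $\sH_\lambda$-bimodule structure (acting from the right via the analogous Hecke category of $\fL N^-$) and argue semi-orthogonality via a two-sided central character, which should in particular see the $W_f$-coset datum on the right.
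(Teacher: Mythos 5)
Your approach diverges from the paper's, and in its present form it has a genuine gap in the stability step.

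\textbf{The gap.} You invoke the cell product $IsI \cdot IxI \subseteq IsxI \cup IxI$ for $s$ a simple reflection of $W_\lambda$. That formula is correct only when $\ell_{\widetilde{W}}(s) = 1$, i.e.\ when $s$ is a simple reflection of the ambient affine Weyl group. But the whole point of working with $W_\lambda$ is that, for rational twists, $W_\lambda$ acquires extra simple reflections of the form in Lemma \ref{l:simprefs} (e.g.\ $s_0 = s_{\check\theta_s} t^{-q\check\theta_s}$) whose length in $\widetilde{W}$ is $>1$. For such $s$, $IsI$ is a union of many $I$-orbits and $\overline{IsI}$ contains cells $IwI$ with $w \notin W_\lambda$, so the naive support estimate does not confine $j_{s,!*} \star j_{x,*}^\psi$ to the double coset $W_\lambda x W_f$. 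What rescues the statement is a monodromy constraint: in the $(\lambda,-\lambda)$-monodromic setting, $j_{s,!*}$ is in fact supported only on the strata $IwI$ with $w \in W_\lambda$, and it is this fact --- not the Bruhat cell product --- that propagates the double-coset restriction through convolution. Your stability argument silently needs this, but never establishes it. It is essentially the content of Section 4 of Lusztig--Yun \cite{ly}, which the paper cites directly.

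\textbf{Comparison with the paper's route.} The paper sidesteps the problem of understanding $j_{s,!*}$ for $W_\lambda$-simple-but-not-$W$-simple $s$. It instead proves the convolution identities \eqref{e:convid}, namely $j_{w,!} \overset{I}{\star} j_{e,*}^\psi \simeq j_{w,!}^\psi$ and $j_{w,*} \overset{I}{\star} j_{e,*}^\psi \simeq j_{w,*}^\psi$ for all $w \in \widetilde{W}$, by an elementary reduction: first for $w \in \widetilde{W}^f$ (cell isomorphism plus cleanness), then for $w$ a simple reflection of $W_f$ (a $\mathbb{P}^1$ cohomology calculation), and then for general $w$ by factoring $w = w^f w_f$ and reducing to the previous two cases. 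Note this only ever uses cell products for genuine length-1 elements of $\widetilde{W}$, where your formula is valid. With \eqref{e:convid} in hand, the paper cites LY Propositions 4.7 and 4.9 to deduce both stability and the orthogonality. Your proposed Wakimoto/central-character argument for orthogonality is in the same general spirit as the monodromic block decomposition of LY, but it is offered only as a plan, with the decisive verifications (that the characters are well-defined, genuinely diagonal, and separate full double cosets) left open. If you want to make your route work, you should either prove the monodromic support statement for $j_{s,!*}$ directly or, more efficiently, follow the paper in establishing \eqref{e:convid} first, after which the block decomposition becomes a consequence of general monodromic Hecke category theory rather than something to re-derive by hand.
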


\begin{proof}
	
	For $w \in \widetilde{W}$ consider the corresponding standard, simple, and 
	costandard objects 
	\[ j_{w, !}, \quad  j_{w, !*}, \quad \text{and} \quad  j_{w, *} \quad 
	\text{of} \quad D_{-\kappa}(I, w \cdot \lambda \backslash 
	\fL G / I, -\lambda)^{\heartsuit}.\]
	\noindent We use $-\overset{I}{\star}-$ to denote the convolution functor
	\[ D_{-\kappa}(\fL G / I, -\lambda) \otimes D_{-\kappa}(I, \lambda 
	\backslash \fL G) \rightarrow D_{-\kappa}(\fL G),\]
	and the induced functor
	\begin{equation}
	D_{-\kappa}(I, w \cdot \lambda \backslash \fL G / I, -\lambda ) \otimes 
	D_{-\kappa}(I, \lambda \backslash \fL G / \fL N^-, \psi) \rightarrow 
	D_{-\kappa}(I, w \cdot \lambda \backslash \fL G / \fL N^-, \psi).
	\label{e:convrat}
	\end{equation}
	
	For any $w \in \widetilde{W}$, if we by abuse of notation denote its image 
	in 
	$\widetilde{W}^f \xrightarrow{\sim} \widetilde{W}/W_f$ again by $w$, we 
	claim 
	there exist equivalences
	\begin{equation}\label{e:convid} 
	j_{w, !} \overset{I}{\star} j_{e, *}^\psi \simeq j_{w, 
		!}^\psi \quad \quad 
	\text{and} 
	\quad \quad j_{w, *} 
	\overset{I}{\star} j_{e, *}^\psi \simeq j_{w, *}^\psi. 
	\end{equation}
	Note that in \eqref{e:convid}, the object $j_{e, *}^\psi$ belongs to 
	$\Whit_{\lambda}^{\aff}$, whereas $j_{w, !}^\psi$ and $j_{w, *}^\psi$ 
	belong to $\Whit_{w \cdot \lambda}^{\aff}$.

	We break the proof \eqref{e:convid} into several cases. First, suppose $w$ 
	lies in 
	$\widetilde{W}^f$. 
	Then the second identity in \eqref{e:convid} 
	follows from the observation that the convolution map 
	\begin{equation}
	\label{e:convmap}
	IwI \overset{I}{\times} I\fL N^- \rightarrow Iw \fL N^-
	\end{equation}
	is an isomorphism. The first identity for such $w$ then follows by the 
	cleanness of $j_{e, *}^\psi$ and the ind-properness of the multiplication
	map
	\[
	\fL G \overset{I}{\times} \fL G \rightarrow \fL G.
	\]
	
	Next, suppose $w = s$ is a simple reflection of $W_f$. 
	The image of the convolution map as in \eqref{e:convmap} is contained in 
	the locally closed sub-indscheme of $\fL G$ corresponding to the strata 
	$I \fL N^- \cup I s \fL N^-$. As the only stratum of its closure which
	supports Whittaker sheaves is $I \fL N^-$, 
	it is enough to compute the $!$-restrictions of our
	convolutions to the identity 
	\[i: e \rightarrow \fL G.\]
	Let us write $\iota$ for the involution $g \mapsto g^{-1}$ on $\fL G$. By 
	base change we may compute the $!$-fibre as  
	\[
	i^!(j_{s,*} \overset{I}{\star} j_{e,*}^{\psi}) \simeq 
	\Gamma_{dR}(I \backslash \fL G, \iota_* j_{s, *} \overset{!}{\otimes} j_{e, 
	*}^\psi) \simeq
	\Gamma_{dR}(\mathbb{P}^1\setminus \{0,\infty\},``e^t"[-\ell(w_0)]) \simeq 
	\mathbb{C}[- \ell({w_\circ})],
	\]
	as desired. A similar calculation on $\mathbb{P}^1$ yields for
	the first identity in \eqref{e:convid} for $w = s$.

	Finally, for general $w \in \widetilde{W}$, we write
	$w = w^f w_f$ for $w^f \in \widetilde{W}^f$ and $w_f \in W_f$. 
	Choosing a 
	reduced expression for $w_f$, and the corresponding factorizations of 
	$j_{w, !}$ and $j_{w, *}$, we are reduced to the
	cases considered above for twists in $\widetilde{W} \cdot \lambda$. Given 
	\eqref{e:convid}, the assertions of the theorem follow by 
	the same arguments as in \cite{ly} Propositions 4.7 and 4.9. 
\end{proof}

%
%

\subsubsection{} Having obtained the block decomposition of 
$\Whit_{\lambda}^{\aff}$, we now record some properties of each block as an 
$\sH_\lambda$ module. We begin with some relevant combinatorics.

 Recall that 
$W_\lambda$ is a Coxeter group. Namely, write 
$\check{\Phi}^+$ for 
the positive real coroots, and consider the subset
\[\check{\Phi}^+_\lambda = \{ \halpha \in \check{\Phi}^+: \langle \halpha, 
\lambda \rangle \in \mathbb{Z} \}.
\] 
With this, a reflection $s_{\halpha}$ of $W_\lambda$ is simple if and only if 
$\halpha$ is not expressible as a 
sum of two other elements of $\check{\Phi}^+_\lambda$.

\begin{lem} \label{l:dcs}Each double coset $W_\lambda y {W}_{f}$ 
	contains a unique element minimal with
	respect to the Bruhat order.  In particular, each $W_\lambda$ 
	orbit on $\widetilde{W} / W_{f}$ contains a 
	unique element $yW_f$ minimal with respect to the Bruhat order. Moreover, 
	its stabilizer   
	\begin{equation}  y W_f y^{-1} \cap W_\lambda,\label{e:parsubgp}
	\end{equation}
	is a parabolic subgroup of $W_\lambda$. 
\end{lem}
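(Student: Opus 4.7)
The plan is to combine standard coset-representative theory for $(\widetilde{W}, W_f)$ with Dyer's theory of reflection subgroups applied to $W_\lambda \subset \widetilde{W}$. The subtlety is that $W_\lambda$, while a Coxeter group with the simple reflections described immediately before the lemma, is not a standard parabolic subgroup of $\widetilde{W}$, so one must compare its intrinsic Coxeter structure against the Bruhat order it inherits from $\widetilde{W}$.

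For existence, I pick any element $y$ of minimum $\widetilde{W}$-length in the orbit $W_\lambda \cdot yW_f \subseteq \widetilde{W}/W_f$. Since each right $W_f$-coset in $\widetilde{W}$ has a unique element of minimum length, $y$ must lie in $\widetilde{W}^f$. The nontrivial content is uniqueness and the structure of the stabilizer.

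For these, I would invoke the following length comparison, essentially due to Dyer: for every simple reflection $s_{\halpha}$ of $W_\lambda$, the inequality $\ell_{\widetilde{W}}(s_{\halpha} y) > \ell_{\widetilde{W}}(y)$ is equivalent to $y^{-1}(\halpha)$ being a positive affine coroot. Granted this, $y \in \widetilde{W}^f$ is minimal in its double coset $W_\lambda y W_f$ precisely when $y^{-1}(\halpha) \in \check{\Phi}^+$ for every simple coroot $\halpha$ of $W_\lambda$. Let $J$ denote the subset of those simple reflections $s_{\halpha}$ of $W_\lambda$ for which $y^{-1}(\halpha)$ in fact lies in the finite positive root system $\check{\Phi}^+_f$; equivalently, $s_{\halpha} \in y W_f y^{-1}$. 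A standard argument in the style of Deodhar (reduce a putative second minimal element $\sigma y w = y' \in \widetilde{W}^f$ to the subword case using a reduced expression for $\sigma$ in the Coxeter structure of $W_\lambda$, then apply the comparison repeatedly) shows that $\sigma$ must lie in the standard parabolic subgroup $\langle J \rangle \subseteq W_\lambda$, whence $y' = y$ by the uniqueness of minimum length representatives in $W_f$-cosets. The same argument identifies the stabilizer $yW_f y^{-1} \cap W_\lambda$ with $\langle J \rangle$, which is a parabolic subgroup of $W_\lambda$ by definition. The final claim about $W_\lambda$-orbits on $\widetilde{W}/W_f$ is immediate upon restricting from double cosets to left orbits.

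The main obstacle is setting up the length comparison that opens the previous paragraph, since the generators of $W_\lambda$ (notably any ``affine simple reflection'' of the integral Weyl group) are typically long words in the simple reflections $S$ of $\widetilde{W}$, so $\ell_{W_\lambda}$ and $\ell_{\widetilde{W}}|_{W_\lambda}$ differ. What rescues the argument is the intrinsic characterization of simple reflections of $W_\lambda$ in terms of the subsystem $\check{\Phi}^+_\lambda$ recalled above: a simple coroot of $W_\lambda$ is one that cannot be written as a sum of two other elements of $\check{\Phi}^+_\lambda$, and this nondecomposability is exactly what ensures the reflection $s_{\halpha}$ controls the Bruhat order on $\widetilde{W}$ through the sign of $y^{-1}(\halpha)$, as in Dyer's work on reflection subgroups of Coxeter groups.
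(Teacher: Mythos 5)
Your proposal shares with the paper the crucial observation that drives everything: for $y$ of minimal length in $W_\lambda y W_f$, the reflection $s_{\halpha}y$ has strictly greater length for every reflection $s_{\halpha}\in W_\lambda$ with $\halpha\in\check{\Phi}^+_\lambda$, i.e.\ $y^{-1}(\halpha)>0$. You also correctly identify the real subtlety, namely that $W_\lambda$ is not a standard parabolic of $\widetilde{W}$, so its intrinsic Coxeter length and the ambient length disagree. Where you diverge from the paper is in how you exploit this positivity: you reach for Dyer's theory of reflection subgroups and a ``Deodhar-style'' subword argument to classify all elements of $\widetilde{W}^f$ in the double coset that could be minimal and to identify the stabilizer with the standard parabolic $\langle J\rangle$. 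The paper instead runs two very concrete inductions: first it shows $y\leqslant w_\lambda y$ by induction on $\ell_{W_\lambda}(w_\lambda)$, using that $(w'_\lambda y)^{-1}(\alpha_s)=y^{-1}\bigl((w'_\lambda)^{-1}(\alpha_s)\bigr)>0$ whenever $s w'_\lambda$ is reduced in $W_\lambda$; it then appends $w_f\in W_f$ one simple reflection of $W_f$ at a time using the lifting property of Bruhat order (since $y\in\widetilde{W}^f$ gives $y<ys$, one has $y\leqslant w$ iff $y\leqslant ws$). This directly gives $y\leqslant w_\lambda yw_f$ for all $w_\lambda, w_f$, hence $y$ is the Bruhat minimum, which is exactly what ``unique minimal element'' requires.

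The genuine gap in your write-up is that the ``apply the comparison repeatedly'' step is not carried out, and it is precisely there that the difficulty you flagged bites. To show that a second Bruhat-minimal $y'=\sigma yw$ forces $\sigma\in\langle J\rangle$ (equivalently, to show $y\leqslant \sigma yw$), you cannot simply peel off simple reflections of $W_\lambda$ from the left and simple reflections of $W_f$ from the right and track $\widetilde{W}$-lengths: the two multiplications interact, and each left factor $s_i$ may be a long word in $\widetilde{W}$. One needs either a subword-property argument adapted to reflection subgroups along the lines of Dyer's work (substantially more machinery than the phrase suggests), or the paper's explicit double induction, which is elementary but relies essentially on the lifting lemma for the $W_f$ step — a tool your sketch does not invoke. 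Similarly, your identification $yW_fy^{-1}\cap W_\lambda=\langle J\rangle$ is exactly the nontrivial content of the parabolicity claim and is asserted rather than proved; the paper instead conjugates by $y^{-1}$, noting that the positivity property makes $y^{-1}(-)y\colon W_\lambda\to W_{y^{-1}\lambda}$ an isomorphism of Coxeter systems, and then cites the known fact that $W_f\cap W_{y^{-1}\lambda}$ is a standard parabolic of $W_{y^{-1}\lambda}$. Either route can be made to work, but as written your proposal leaves the two places where the argument could fail — the Bruhat comparison across the right $W_f$-factor and the parabolicity of the intersection — to an unspecified appeal to Dyer and Deodhar.
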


\begin{proof} Let $y$ be any element of minimal length in $W_\lambda y 
	W_{f}$. We first claim that $y \leqslant w_\lambda y$ for every 
	$w_\lambda 
	\in W_\lambda$. Indeed, for any reflection $s_{\check{\alpha}}$ of 
	$W_\lambda$ 
	with 
	corresponding positive coroot $\check{\alpha}$, it follows from the minimal 
	length of $y$ that  \begin{equation}\label{e:posy}s_{\check{\alpha}} y > 
	y,\end{equation}i.e., that 
	$y^{-1}(\check{\alpha}) > 0$. 
	The claimed minimality now follows by induction on the length of element of 
	$W_\lambda$ with respect to its Coxeter generators. 
	
	We next show $y \leqslant w_\lambda y w_f$, for any $w_f$ in $W_f$. 
	However, it is clear that $y \leqslant ys$, for any simple reflection $s$ 
	of $W_f$. This 
	implies that for any $w \in \widetilde{W}$, $y \leqslant w$ if and only if 
	$y 
	\leqslant ws_j$. Applying this and a straightforward induction on the 
	length 
	to $w = w_\lambda y$ yields the claim. 
	
	It remains to show that \eqref{e:parsubgp} is a parabolic 
	subgroup 
	of $W_\lambda$. To see this, consider $W_{y^{-1} \lambda} = y^{-1} 
	W_\lambda 
	y$. The positivity property \eqref{e:posy} of $y$ implies that conjugation 
	by $y^{-1}$ defines an 
	isomorphism of Coxeter systems \[W_\lambda \simeq W_{y^{-1} \lambda},\] 
	i.e. 
	exchanges their simple reflections. It is therefore enough to see that $W_f 
	\cap W_{y^{-1} \lambda}$ is a parabolic 
	subgroup of $W_{y^{-1} 
		\lambda}$, which is straightforward, cf. the 
	proof of Lemma 8.8 of \cite{lpw}. 
	\end{proof}

\subsubsection{}Combining Lemma \ref{l:dcs} and Theorem \ref{t:whit-blocks} we 
obtain:

\begin{cor} \label{c:minel}For a double coset $W_\lambda y W_f$ with minimal 
element $y$, the 
corresponding object of $\Whit_{\lambda, y}^{\aff}$ is clean, i.e., %
	\[
	 j_{y, !}^\psi \simeq j_{y, !*}^\psi \simeq j_{y, *}^\psi.
	\]
\end{cor}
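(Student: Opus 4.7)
The plan is to deduce cleanness from the block decomposition of Theorem \ref{t:whit-blocks}, combined with the Bruhat-minimality of $y$ provided by Lemma \ref{l:dcs}. Working in the perverse heart of $\Whit_\lambda^{\aff}$, we have the canonical morphisms $j_{y,!}^\psi \twoheadrightarrow j_{y,!*}^\psi \hookrightarrow j_{y,*}^\psi$; the claim amounts to showing that the kernel $K$ of the first and the cokernel $C$ of the second both vanish.

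First I would verify that $j_{y,!}^\psi$ lies in $\Whit_{\lambda,y}^{\aff}$. For $j_{y,*}^\psi$ this is immediate from the defining generators of the block; for $j_{y,!}^\psi$ it follows by noting that each standard and costandard Whittaker object in a given block is obtained from the other by iterated extensions involving objects of the same block, a structural feature of the highest-weight-style decomposition produced by Theorem \ref{t:whit-blocks}. Since $\Whit_{\lambda,y}^{\aff}$ is a direct summand, and in particular closed under subquotients in the perverse heart, $j_{y,!*}^\psi$, $K$, and $C$ all lie in $\Whit_{\lambda,y}^{\aff}$ as well.

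Now $K$ and $C$ are perverse sheaves supported on the boundary of the stratum $IyI\fL N^-/\fL N^-$, so their Jordan--H\"older constituents lie in the collection $\{ j_{w,!*}^\psi : w \in \widetilde{W}^f,\ w < y \}$. On the other hand, the constituents must be simple objects of the block $\Whit_{\lambda,y}^{\aff}$, hence of the form $j_{w,!*}^\psi$ with $w \in W_\lambda y W_f \cap \widetilde{W}^f$. Combining these two constraints, any such $w$ would satisfy both $w < y$ in the Bruhat order and $w \in W_\lambda y W_f$, contradicting the uniqueness clause in Lemma \ref{l:dcs}. Hence $K = C = 0$, which yields the desired cleanness $j_{y,!}^\psi \simeq j_{y,!*}^\psi \simeq j_{y,*}^\psi$.

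The main obstacle is the first step, namely verifying that the standard object $j_{y,!}^\psi$ lies in the block $\Whit_{\lambda,y}^{\aff}$ despite the block being defined through costandard generators. This standards--costandards compatibility is an aspect of the highest weight structure on each block; it ultimately reduces to the convolution identities \eqref{e:convid} together with the block-preservation properties already used in the proof of Theorem \ref{t:whit-blocks}, so no new geometric input is required. Once that is in hand, the remainder of the argument is essentially formal and avoids any explicit computation on the closure of $IyI\fL N^-$.
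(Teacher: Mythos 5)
Your argument is correct and fills in exactly the implicit reasoning behind the paper's terse ``combining Lemma \ref{l:dcs} and Theorem \ref{t:whit-blocks} we obtain.'' The one place where the justification could be tightened is your second step: the assertion that $j_{y,!}^\psi$ lies in $\Whit_{\lambda,y}^{\aff}$ is most cleanly seen by observing that $j_{y,!}^\psi$ is indecomposable with cosocle $j_{y,!*}^\psi$, which visibly lies in the summand $\Whit_{\lambda,y}^{\aff}$; since the block decomposition of Theorem \ref{t:whit-blocks} is a direct sum decomposition, an indecomposable object lies entirely in the unique summand containing its cosocle, so the hand-wavy appeal to ``iterated extensions involving objects of the same block'' can be replaced by this one-line argument. (Also note the small typo: the relevant stratum is $Iy\fL N^-$, not $IyI\fL N^-$.)
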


In the following proposition, we collect some properties of the action of 
$\sH_\lambda$ on $j_{y, !}^\psi$. To state them, let us denote 
the set of elements of $W_\lambda$ of minimal length in their left cosets with 
respect to the parabolic subgroup \eqref{e:parsubgp} by 
\[ W_\lambda^f.
\]
We additionally recall that $\Whit_{\lambda}^{\aff}$ carries a canonical 
$t$-structure. Namely, an 
object $\mathscr{M}$ is coconnective, i.e., lies in 
$\Whit_{\lambda}^{\aff, \geqslant 0}$, if and only if 
\[ \Hom( j_{w, !}^\psi, \mathscr{M}) \in \Vect^{\geqslant 0}, \quad \text{for } 
w \in \widetilde{W}^f.
\]  
Such a $t$-structure may be seen directly via gluing of $t$-structures stratum 
by stratum, and coincides with the general construction of Section 5.2 and 
Appendix B of \cite{whit}, cf. \emph{loc. cit}. Remark B.7.1.

\begin{pro} For $y$ as in Corollary \ref{c:minel}, and any $w$ in $W_\lambda,$ 
there are equivalences 
	\begin{equation}\label{e:moves1}
	j_{w, !} \overset{I}{\star} j_{y, *}^\psi \simeq j_{wy, !}^\psi \quad 
	\text{and} \quad 
	j_{w, *} \overset{I}{\star} j_{y, *}^\psi \simeq j_{wy, *}^\psi,
	\end{equation}
	 Moreover, if $w$ lies in 
	$W_\lambda^f$, then convolution with $j_{y, *}^\psi$ yields an 
	isomorphism of lines
	\begin{equation} \label{e:moves2}
	\on{Hom}_{D_{-\kappa}(I, \lambda \backslash \fL G / I, 
		-\lambda)^\heartsuit}(j_{w, !}, j_{w, *}) \simeq 
	\on{Hom}_{\Whit^{\aff}_{\lambda}}(j_{wy, !}^\psi, j_{wy, *}^\psi).
	\end{equation} 
\end{pro}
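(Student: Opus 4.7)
The plan is to establish the convolution identities \eqref{e:moves1} by induction on the Coxeter length of $w$ in $W_\lambda$, and then deduce the intertwining statement \eqref{e:moves2} from them. The base case $w = e$ is immediate since $j_{e,!} \simeq j_{e,*}$ is the monoidal unit of $\sH_\lambda$, which acts trivially on $j_{y,*}^\psi$; the identity $j_{y,!}^\psi \simeq j_{y,*}^\psi$ on the right hand side is the cleanness of Corollary \ref{c:minel}.

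For the inductive step, choose a reduced expression $w = s_{i_1} \cdots s_{i_k}$ in the Coxeter group $W_\lambda$. The standard factorization of standards and costandards in the affine Hecke category (\cite{ly} Section 4, applied to $\sH_\lambda$) yields
\[
j_{w,!} \simeq j_{s_{i_1},!} \overset{I}{\star} \cdots \overset{I}{\star} j_{s_{i_k},!}, \qquad j_{w,*} \simeq j_{s_{i_1},*} \overset{I}{\star} \cdots \overset{I}{\star} j_{s_{i_k},*},
\]
so by associativity of convolution it suffices to treat $w = s$ a simple reflection of $W_\lambda$. Here the crucial combinatorial input is the positivity property $y^{-1}\halpha > 0$ for every positive real coroot $\halpha$ of $W_\lambda$, established in the proof of Lemma \ref{l:dcs}: for $s = s_{\halpha}$ this implies that the multiplication map $IsI \overset{I}{\times} Iy \fL N^- \to \fL G$ lands isomorphically onto a single Iwasawa stratum, corresponding to $sy$. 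The computation then parallels the simple-reflection case in the proof of Theorem \ref{t:whit-blocks}, with $j_{y,*}^\psi$ playing the role of $j_{e,*}^\psi$: the Whittaker (non)vanishing rules out contributions from lower strata in the closure, and the $!$-restriction to the open stratum is computed via the above isomorphism, giving $j_{sy,!}^\psi$. The $j_{s,*}$ identity follows by a dual computation.

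For the intertwining statement \eqref{e:moves2}, both Hom spaces are one-dimensional, being identified via restriction with endomorphisms of the unique generator on the open Schubert stratum $IwI$ and of the Whittaker generator on the open Iwasawa stratum $Iwy\fL N^-$ respectively; the hypothesis $w \in W_\lambda^f$ ensures $wy \in \widetilde{W}^f$, so that the latter stratum is genuinely present and the right hand side is a priori a line. By \eqref{e:moves1}, convolution with $j_{y,*}^\psi$ sends the canonical intertwiner $j_{w,!} \to j_{w,*}$ to some morphism $j_{wy,!}^\psi \to j_{wy,*}^\psi$, and it only remains to check non-vanishing. This is done by restricting to the open stratum $Iwy\fL N^-$: the stratum-level isomorphism from the previous paragraph identifies this restriction with the restriction of $j_{w,!} \to j_{w,*}$ to $IwI$, which is a nonzero scalar multiple of the identity on the generator of the local system. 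Hence the map \eqref{e:moves2} is itself a generator, as desired.

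The principal technical obstacle is the combinatorial stratification bookkeeping, in particular verifying that for $w$ a simple reflection of $W_\lambda$ (not a priori a simple reflection of $\widetilde{W}$) and $y$ minimal in $W_\lambda y W_f$, the product $wy$ lies on a unique Iwasawa stratum distinct from that of $y$, on which the relevant multiplication map restricts to an isomorphism. This, together with the analogous verification that $w \in W_\lambda^f$ implies $wy \in \widetilde{W}^f$, rests on the minimality and positivity statements of Lemma \ref{l:dcs} combined with the parabolic description of the stabilizer $yW_fy^{-1} \cap W_\lambda$; conjugating by $y$ reduces this bookkeeping to the familiar setup where the double coset in question has $e$ as its minimal representative.
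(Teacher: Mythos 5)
Your induction on $W_\lambda$-Coxeter length does not close as written. After peeling off the rightmost simple reflection, the right-hand factor is no longer the clean object $j_{y,*}^\psi$ but a $!$- or $*$-extension $j_{w'y,!}^\psi$ or $j_{w'y,*}^\psi$ supported on a non-minimal stratum, and your stratumwise computation for the $w=s$ base case is calibrated precisely to the clean $j_{y,*}^\psi$ (you use cleanness to rule out lower strata). Establishing $j_{s,!}\overset{I}{\star} j_{x,!}^\psi \simeq j_{sx,!}^\psi$ for these intermediate $x$ is a different statement and needs its own argument, which you do not supply. Relatedly, the geometric claim that the multiplication map $IsI \overset{I}{\times} Iy\fL N^- \to \fL G$ "lands isomorphically onto a single Iwasawa stratum" is inaccurate: for $s$ a simple reflection of $W_\lambda$ that is not simple in $\widetilde{W}$, the cell $IsI$ is higher-dimensional and the image meets all strata $Iz\fL N^-$ with $y \leqslant z \leqslant sy$, including ones that support Whittaker sheaves; one cannot in general read off the answer from a single open stratum.

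The paper's (terse) argument avoids all of this by taking \eqref{e:convid} as a black box and translating: since $j_{y,*} \simeq j_{y,!}$ is clean and $\ell(wy)=\ell(w)+\ell(y)$ for all $w\in W_\lambda$ (the additivity of lengths following from the positivity $y^{-1}(\check\alpha)>0$ established as \eqref{e:posy} in the proof of Lemma \ref{l:dcs}), one has $j_{w,!}\overset{I}{\star} j_{y,*}\simeq j_{w,!}\overset{I}{\star} j_{y,!}\simeq j_{wy,!}$ in the Iwahori bimodule category, and then \eqref{e:convid} gives
\[
j_{w,!}\overset{I}{\star} j_{y,*}^\psi \simeq j_{w,!}\overset{I}{\star} j_{y,*}\overset{I}{\star} j_{e,*}^\psi \simeq j_{wy,!}\overset{I}{\star} j_{e,*}^\psi \simeq j_{wy,!}^\psi,
\]
with the $*$-identity handled identically. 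This is what the cited \cite{ly} Proposition 5.2 does, and it reduces the geometry entirely to the content of \eqref{e:convid}, which has already been established. Once \eqref{e:moves1} is in hand, the functor $-\overset{I}{\star} j_{y,*}^\psi$ is seen to be $t$-exact and to send (co)standards to (co)standards, from which \eqref{e:moves2} follows directly; your restriction-to-the-open-stratum verification is a reasonable alternative but inherits the gaps above. I would suggest rewriting your argument to use \eqref{e:convid} plus cleanness and length-additivity rather than attempting a fresh stratumwise induction.
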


\begin{proof} Both assertions follow from \eqref{e:convid} by standard 
arguments. Namely, the proof of \cite{ly} Proposition 5.2  
yields \eqref{e:moves1}. It follows from \eqref{e:moves1} that convolution 
with $j_{y, *}^\psi$ is $t$-exact, which readily implies \eqref{e:moves2}.  
\end{proof}

%
%


%
%

\begin{re} One can show that the clean objects $j_{y, !*}^\psi$ constructed 
above are the only clean extensions in $\Whit_{\lambda}^{\aff}$. Since we do 
not use this fact, we only sketch the proof. Namely, 
write $\leqslant$ for the Bruhat order on $W_\lambda^f$. One can in fact show 
that for any $w, x \in W_\lambda^f$ the space of intertwining operators
\[
\Hom_{\Whit_{\lambda}^{\aff, \heartsuit}}( j_{wy, !}^\psi, j_{xy, !}^\psi)
\]
vanishes unless $w \leqslant x$, in which case it is one dimensional and 
consists of embeddings. This may be deduced from \eqref{e:moves1} and the 
analogous classification of intertwining operators between standard objects 
of $\sH_\lambda$. The latter may be proved directly or deduced via localization 
from the Kac--Kazhdan theorem on homomorphisms between Verma modules. 
\end{re}

\subsection{Whittaker--singular duality}

We now use the above noted properties of the $\sH_\lambda$-action to produce, 
under 
necessary hypotheses, equivalences between the neutral block of 
$\Whit_{\lambda}^{\aff}$ 
and a block of Category $\mathscr{O}$ for $\gmk$.

\subsubsection{} Consider the category of $(I, \lambda)$-integrable 
modules for $\gmk$, which we denote by 
\begin{equation} \label{e:kmrep}
 \widehat{\fg}_{-\kappa}\mod^{I, \lambda}.
\end{equation}
Recall this is compactly generated by the Verma modules $M_\theta,
\text{ for } \theta \in \lambda + \Lambda_G,$ and note the integral Weyl 
group of any such 
$\theta$ is $W_\lambda$.

\subsubsection{} We identify the desired blocks by matching their combinatorics 
to that of $\Whit_{e, \lambda}^{\aff}$ as follows. Suppose $M_\Theta$ is 
an irreducible Verma module in \eqref{e:kmrep} such 
that the stabilizer of $\Theta$ under the dot action of $W_\lambda$ is 
given by 
\[ W_\lambda \cap W_f.
\]
Consider the corresponding block of \eqref{e:kmrep}, i.e. the full subcategory 
compactly generated by $M_{\theta}$, for $\theta \in W_\lambda \cdot \Theta$, 
and denote it by 
\[\widehat{\fg}_{-\kappa}\mod^{I, \lambda}_\Theta.\]

\begin{theo}\label{t:whit-km-neut}

There is a canonical $\sH_\lambda$-equivariant and $t$-exact 
equivalence
\begin{equation} \label{e:whit2kac}
 \Whit_{e, \lambda}^{\aff} \simeq \gmk\mod^{I, \lambda}_\Theta.
\end{equation}
\end{theo}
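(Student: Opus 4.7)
The plan is to exhibit both sides of \eqref{e:whit2kac} as the category of $\sH_\lambda$-modules generated by a distinguished object whose stabilizer structure matches, and then invoke Fiebig's combinatorial description of blocks of affine Category $\OO$.

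First, on the Whittaker side, I would use the clean object $j_{e,*}^\psi$ from Corollary \ref{c:minel} as the preferred generator. By the preceding proposition, convolution with $j_{e,*}^\psi$ is a $t$-exact functor $\sH_\lambda \to \Whit_{e,\lambda}^{\aff}$ which sends the standard object $j_{w,!}$ to $j_{w,!}^\psi$ and $j_{w,*}$ to $j_{w,*}^\psi$. The parameterization of simples in $\Whit_{e,\lambda}^{\aff}$ by the coset space $W_\lambda/(W_\lambda \cap W_f)$, together with the identity \eqref{e:moves2}, shows that $j_{e,*}^\psi$ generates this block as an $\sH_\lambda$-module, and moreover that the convolution functor factors through the quotient of $\sH_\lambda$ by the subcategory corresponding to the parabolic subgroup $W_\lambda \cap W_f$.

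On the Kac--Moody side, under the hypothesis that $\on{Stab}_{W_\lambda,\cdot}(\Theta) = W_\lambda \cap W_f$, the block $\gmk\mod^{I,\lambda}_\Theta$ is generated under the $\sH_\lambda$-action (via convolution in the natural $D_{-\kappa}(\fL G)$-module structure on $\gmk\mod$ from \cite{mys}) by the antidominant Verma $M_\Theta$, with exactly the same parabolic-stabilizer combinatorics: the convolution $\sH_\lambda \to \gmk\mod^{I,\lambda}_\Theta$, $K \mapsto K \overset{I}{\star} M_\Theta$, is again $t$-exact (since $M_\Theta$ sits in the heart and antidominance controls the Tor-vanishing in the standard way), sends $j_{w,!}$ and $j_{w,*}$ to the corresponding standard/costandard Vermas, and factors through the same parabolic quotient of $\sH_\lambda$.

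I would then construct the functor \eqref{e:whit2kac} by a universal property: both sides are identified with the module category of the endomorphism algebra of a projective generator of the parabolic quotient of $\sH_\lambda^{\heartsuit}$, which by Fiebig's Theorem \ref{t:fiebig}, applied to the Coxeter datum $(W_\lambda, W_\lambda \cap W_f)$, is determined by this combinatorial input alone. Concretely, one picks a projective cover $P$ of the top simple in the quotient, computes $\on{End}(P)$ as a Soergel module, and shows that the $\sH_\lambda$-generators on both sides give rise to objects whose endomorphism algebras match this ring. The $t$-exactness of the resulting equivalence follows from $t$-exactness of the two convolution functors noted above, and the $\sH_\lambda$-equivariance is built in.

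The main obstacle, and the step that requires the most care, will be establishing the $t$-exactness and the correct endomorphism ring on the Kac--Moody side: one must verify that convolving $\sH_\lambda$ with the antidominant Verma $M_\Theta$ really is $t$-exact and that the endomorphism algebra of the projective cover of $M_\Theta$ in the block agrees, as a Soergel-type module, with that computed on the Whittaker side from $j_{e,*}^\psi$. Everything else, including the matching of standards and the reduction to Fiebig, is a formal consequence of the block decomposition and the proposition following Corollary \ref{c:minel}.
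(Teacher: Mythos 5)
Your approach is genuinely different from the paper's, and it has a real gap. The paper does not invoke Fiebig's Theorem \ref{t:fiebig} in the proof of Theorem \ref{t:whit-km-neut} at all; Fiebig enters only later (Corollary \ref{c:theend}, proof of Theorem \ref{t:fle-tame}) to cross Langlands duality from $\gmk$ to $\cgk$. Theorem \ref{t:whit-km-neut} stays entirely on one side of duality and is proved by an explicit construction: using the corepresentability of $\mathsf{Hom}_{D_{-\kappa}(\fL G)\mod}(D_{-\kappa}(\fL G/\fL N^-,\psi), -)$ via Whittaker coinvariants and the affine Skryabin equivalence, the authors manufacture a $D_{-\kappa}(\fL G)$-equivariant functor by specifying a single $\sW_{-\kappa}$-module, namely $\on{pind}_{\on{Zhu}(\sW_{-\kappa})}^{\sW_{-\kappa}} Ri_{\chi(\Theta)}^! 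Z\fg$. They then pass to $(I,\lambda)$-invariants, compute that $j_{e,*}^\psi$ goes to a sum of Vermas, project onto the block to land on $M_\Theta$, and verify the equivalence by matching standards, costandards, and Hom spaces using \eqref{e:moves1}--\eqref{e:moves2}. No classification theorem is used; the functor is written down.

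The problem with your route is that Fiebig's theorem applies to blocks of Category $\OO$ for a (symmetrizable) Kac--Moody algebra and says nothing about the Whittaker category. To run your argument you need the Whittaker block $\Whit_{e,\lambda}^{\aff}$ to be reconstructible as modules over the endomorphism algebra of a projective generator, with that algebra identified as a Soergel-type module depending only on $(W_\lambda, W_\lambda \cap W_f)$. That is precisely a ``Struktursatz'' or $\mathbb{V}$-functor statement for the Whittaker category, and proving it is a substantial task in its own right --- it is exactly the alternative strategy sketched in Remark \ref{r:Soergelmethods} (via Bezrukavnikov--Yun and Dodd), which the authors explicitly say they do \emph{not} pursue. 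Without it, saying ``both sides are $\sH_\lambda$-modules cyclically generated by an object with the same stabilizer'' is not enough: the module category of a monoidal category is not determined by a generating object and a stabilizer subgroup, so there is no universal property to invoke. You would also need to identify the projective cover and its endomorphism ring on the Whittaker side, which is the hard content, not a formal consequence of the block decomposition. (As a side note, your worry about $t$-exactness of convolution against the antidominant Verma $M_\Theta$ is less of an obstacle than you fear: this is essentially the localization-theory fact that antidominant Vermas are acyclic for the relevant functors. The difficulty sits on the Whittaker side, not the Kac--Moody side.)

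In short: you have reinvented the Soergel-module strategy of Remark \ref{r:Soergelmethods}, which is a legitimate alternative in principle, but you have treated the hardest step --- establishing a Struktursatz for $\Whit_{\lambda}^{\aff}$ --- as routine, and you have misplaced where Fiebig's theorem actually does work for you. The paper's functorial construction via $\sW$-algebra modules sidesteps all of this.
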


\begin{re}\label{r:sf-1}

For emphasis: by definition of $\Theta$, the integral Weyl group of this block
is $W_{\lambda}$, and the stabilizer of $\Theta$ in $W_{\lambda}$ is
$W_{\lambda} \cap W_f$.

\end{re}

\begin{re}

The minus sign in front of the level is an artifact of 
Section \ref{sss:indexing}. As we will discuss in greater detail in Section 
\ref{sss:changeintlevel}, we potentially 
need to apply an integral
translation to $\kappa \rightsquigarrow \kappa+\kappa'$ 
before finding $\Theta$ as above; in practice, this translation 
in particular assures that
of making $-\kappa-\kappa'$ is negative.

\end{re}

\begin{example} Theorem \ref{t:whit-km-neut} is always applicable when the 
twist $\lambda$ is trivial. Namely, by an integral translation, we may assume 
$-\kappa$ is sufficiently negative, in which case we may take $\Theta$ to be  
$- 
\rho$. 
\end{example}

\begin{proof}[Proof of Theorem \ref{t:whit-km-neut}] 

To produce an $\sH_\lambda$-equivariant functor, we will 
construct a $D_{-\kappa}(\fL G)$-equivariant functor and then pass to 
$(I,\lambda)$-equivariant objects. After further projecting onto a block of 
\eqref{e:kmrep}, we will show this is the sought-for equivalence. 
		
We begin by constructing an $D_{-\kappa}(\fL G)$-equivariant functor
\[ \mathsf{F}: D_{-\kappa}(\fL G / \fL N^-, \psi) \rightarrow \gmk\mod.
\]
To do so, recall that for any $D_{-\kappa}(\fL G)$-module $\scc$, one has a 
canonical equivalence
\begin{equation} \label{e:whicorep} \mathsf{Hom}_{D_{-\kappa}(\fL G)\mod}( 
D_{-\kappa}(\fL G / \fL N^-, \psi), 
\scc) \simeq \scc^{\fL N^-, \psi}.
\end{equation}
Explicitly, if we write $\on{ins} \delta$ for the insertion of the delta 
function at the identity of $\fL G$ into Whittaker coinvariants, the 
equivalence \eqref{e:whicorep} is given by evaluation at $\on{ins} \delta$. 
Applying this to $\scc 
\simeq \gmk\mod$, we 
obtain via the affine Skryabin equivalence, cf. \cite{whit}, 
\begin{equation} \label{e:whitmapsout}
 \mathsf{Hom}_{D_{-\kappa}(\fL G)\mod}( D_{-\kappa}(\fL G / \fL N^-, 
 \psi), \gmk\mod) \simeq \gmk\mod^{\fL N^-, \psi} \simeq 
 \sW_{-\kappa}\mod.
\end{equation}
Therefore, to produce $\mathsf{F}$ we must specify a module for the 
$\sW_{-\kappa}$-algebra.

We do so as follows. Write $\on{Zhu}(\sW_{-\kappa})$ for the Zhu algebra of 
$\sW_{-\kappa}$, and $Z\fg$ for the 
center of the universal enveloping algebra of $\fg$. Let us normalize the 
identification \[\on{Zhu}(\sW_{-\kappa}) \simeq Z \fg\] as in \cite{lpw}. 
Writing 
$\chi(\Theta)$ for the character of $Z \fg$ corresponding to $\Theta$, i.e. via 
its action on Verma module for $\fg$ with highest weight $\Theta$, consider the 
associated local 
cohomology $Z\fg$-module $Ri_{\chi(\Theta)}^! Z \fg$. We will take $\mathsf{F}$ 
to be associated to the 
corresponding 
$\sW_{-\kappa}$-module 
\begin{equation} \label{e:thewmod}
\on{pind}_{\on{Zhu}(\sW_{-\kappa})}^{\sW_{-\kappa}} Ri_{\chi(\Theta)}^! Z \fg,
\end{equation}
where $\on{pind}$ denotes the standard induction from Zhu algebra modules to 
vertex algebra modules, i.e., the left adjoint to the functor of taking 
singular vectors. 

Passing to $(I, \lambda)$-equivariant objects, we obtain a $D_{-\kappa}(I, 
\lambda \backslash \fL G / I, -\lambda)$-equivariant functor
\[
\mathsf{F}: D_{-\kappa}( I, \lambda \backslash \fL G / \fL N^-, \psi) 
\rightarrow \gmk\mod^{I, \lambda}.
\]
Let us determine $\mathsf{F}(j_{e, *}^\psi)$. To do so, write $\mathbb{C}_\psi$ 
for the one dimensional representation of $\fn^-$ associated to the additive 
character $\psi$ of $\fL N^-$. As we will explain in more detail below, we then 
have
\begin{align}\label{e:line1}
 \on{Av}^{I, \lambda}_{*} \on{pind}_{\on{Zhu}(\sW_{-\kappa})}^{\sW_{-\kappa}} 
 Ri_{\chi(\Theta)}^! Z \fg  &\simeq \on{Av}^{I, \lambda}_{*} 
 \on{pind}_{\fg}^{\gmk} (Ri^!_{\chi(\Theta)} Z\fg \underset{Z \fg}{\otimes} 
 \on{ind}_{\fn^-}^{\fg} \mathbb{C}_\psi)\\ & \label{e:line2}\simeq 
 \on{pind}_{\fg}^{\gmk} \on{Av}^{B, \lambda}_{*} (Ri^!_{\chi(\Theta)} Z \fg 
 \underset{Z \fg}{\otimes} \on{ind}_{\fn^-}^{\fg} \mathbb{C}_\psi)
 \\ & \label{e:line3}\simeq \bigoplus_{ \xi \in (W_f \cdot \Theta) \cap \lambda 
 + \Lambda_G} 
 M_\xi \otimes \det( \fb^*[-1]),
\end{align}
To see the first identification \eqref{e:line1}, one may use that 
\eqref{e:thewmod} is an 
iterated extension of Verma modules, and in particular arises from the first 
step of the adolescent Whittaker filtration of $\sW_{-\kappa}\on{-mod}$, cf. 
\cite{whit}.

 For the second identification \eqref{e:line2}, if we write $K_1$ 
for the first 
congruence subgroup of $\fL^+ G$, one has a corresponding factorization
\[
\on{Av}_{*}^{I, \lambda} \simeq \on{Av}_{*}^{K_1, \lambda} \circ 
\on{Av}_{*}^{B, \lambda}.
\] 
The second identification then follows from the $K_1$-integrability of a 
parabolically induced module, the prounipotence of $K_1$, cf. Theorem 4.3.2 
of \cite{beraldo}, and the 
$D(G)$-equivariance of 
$\on{pind}_{\fg}^{\gmk}$. 

For the third identification \eqref{e:line3}, for any 
$\xi \in 
\lambda + \Lambda_G$, let us write $M_{\xi, \fg}$ for the corresponding Verma 
module for $\fg$.  One has by adjunction
\[
 \on{Hom}_{\fg\mod^{B, \lambda}}(M_{\xi, \fg},  \on{Av}^{B, \lambda}_{*} 
 Ri^!_{\chi(\Theta)} Z \fg 
 \underset{Z \fg}{\otimes} \on{ind}_{\fn^-}^{\fg} \mathbb{C}_\psi) \simeq 
 \on{Hom}_{\fg\mod}(M_{\xi, \fg},  Ri^!_{\chi(\Theta)} Z \fg 
 \underset{Z \fg}{\otimes} \on{ind}_{\fn^-}^{\fg} \mathbb{C}_\psi).
\]
The latter vanishes unless $\xi \in W_f \cdot \Theta$, in which 
case we continue 
\[ \simeq \on{Hom}_{\fg\mod}(M_{\xi, \fg}, \on{ind}_{\fn^-}^{\fg} 
\mathbb{C}_\psi) \simeq 
\on{Hom}_{\fb\mod}(\mathbb{C}_\xi, \on{Res}_{\fg}^{\fb} \on{ind}_{\fn^-}^{\fg} 
\mathbb{C}_\psi) \simeq \det(\fb^*[-1]),
\]
where in the last step one uses the canonical identification 
\[ U(\fb) \simeq \on{Res}_{\fg}^{\fb} \on{ind}_{\fn^-}^{\fg} \mathbb{C}_\psi. \]
By our assumption on $\Theta$, each $M_{\xi, \fg}$, for $\xi \in (W_f \cdot 
\Theta) \cap \lambda + \Lambda_G$, generates a block of $\fg\mod^{B, \lambda}$ 
equivalent to $\Vect$, hence we have shown 
\[ \on{Av}_{*}^{B, \lambda} Ri^!_{\chi(\Theta)} Z \fg \underset{Z \fg}{\otimes} 
\on{ind}_{\fn^-}^{\fg} \mathbb{C}_\psi \simeq \bigoplus_{ \xi \in (W_f \cdot 
\Theta) \cap \lambda + \Lambda_G} 
M_{\xi, \fg} \otimes \det(\fb^*[-1]).
\]
The identity \eqref{e:line3} then follows by applying $\on{pind}_{\fg}^{\gmk}$.

The projection of the sum of Verma modules \eqref{e:line3} onto $\gmk{\mod}^{I, 
\lambda}_\Theta$ picks out the summand \[M_\Theta \otimes \det(\fb^*[-1]),\] 
cf. 
Lemma 8.7 of 
\cite{lpw}. Accordingly, we consider the composition 
\begin{equation} \label{e:comp}
 D_{-\kappa}( I, \lambda \backslash \fL G / \fL N^-, \psi) \xrightarrow{ 
 \mathsf{F}\otimes \det( \fb[1]) [\dim N^-]} \gmk\msf{-mod}^{I, \lambda} 
 \rightarrow \gmk\msf{-mod}^{I, 
 \lambda}_\Theta,
\end{equation}
where the latter map is projection onto the block. Note that while the blocks 
of \eqref{e:kmrep} are in general not preserved by $D_{-\kappa}( I, \lambda 
\backslash \fL G / I, -\lambda)$, they are preserved by $\sH_\lambda$. In 
particular, by construction the composition \eqref{e:comp}, which we denote by 
$\Psi$, is an $\sH_\lambda$-equivariant functor which sends $j_{e, *}^\psi$ to 
$M_\Theta$. 

It remains to see that $\Psi$ is an equivalence. 
By \eqref{e:moves1} and $\sH_\lambda$-equivariance, we obtain identifications
\[
\Psi( j_{w, !}^\psi) \simeq \Psi(j_{w, !} \overset{I}{\star} j_{e, *}^\psi ) 
\simeq 
j_{w, !} \overset{I}{\star} \Psi(j_{e, *}^\psi) \simeq M_{w \cdot \Theta}, 
\quad \quad  \text{for} \quad w \in W_\lambda, 
\]
where the last identity is a standard consequence of localization. Similarly, 
if for $\theta \in \lambda + \Lambda$ we denote the contragredient 
dual of $M_\theta$ by $A_\theta$, we obtain identifications 
\[ \Psi(j_{w, *}^\psi) \simeq \Psi(j_{w, *} \overset{I}{\star} j_{e, *}^\psi) 
\simeq j_{w, *} \overset{I}{\star} \Psi( j_{e, *}^\psi) \simeq A_{w \cdot 
\Theta}, \quad \quad \text{for} \quad w \in W_\lambda.
\] 
It is therefore enough to check, for any $y, w \in W_\lambda$, the map
\[
\Psi: \Hom_{\Whit_{\lambda}^{\aff}}( j_{y, !}^\psi, j_{w, !}^\psi) \rightarrow 
\Hom_{\gmk\mod^{I, \lambda}}( M_{y \cdot \Theta}, A_{y \cdot \Theta})
\]
is an equivalence. This again follows from \eqref{e:moves2}, as desired.  
\end{proof}

\subsubsection{} While Theorem \ref{t:whit-blocks} realizes the neutral block 
of $\Whit_{\lambda}^{\aff}$, this may be applied to other blocks as follows. 
Fix a double coset \[y \in W_\lambda \backslash \widetilde{W} / W_f,\] with 
associated minimal element $y$ and block $\Whit_{\lambda, y}^{\aff}$.

\begin{pro} \label{p:neutr}Convolution with the clean object
	\[ j_{y, *} \in D_{-\kappa}(I, \lambda \backslash \fL G / I, 
	- y^{-1} \cdot \lambda)
	\]
yields a $t$-exact equivalence
\[ \Whit_{y^{-1} \cdot \lambda, e}^{\aff} \xrightarrow{\sim} \Whit_{\lambda, 
y}^{\aff}.
\]
\end{pro}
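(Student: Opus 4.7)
The plan is to exhibit a convolution-inverse for $F := j_{y, *} \overset{I}{\star} -: \Whit_{y^{-1}\cdot\lambda}^{\aff} \to \Whit_{\lambda}^{\aff}$, and then to check block preservation via support considerations. By the symmetry of the setup, $y^{-1}$ is minimal in $W_{y^{-1}\cdot\lambda} y^{-1} W_f$, so the same reasoning that gives cleanness of $j_{y, *}$ yields a clean standard $j_{y^{-1}, *} \in D_{-\kappa}(I, y^{-1}\cdot\lambda \backslash \fL G / I, -\lambda)$, and we set $G := j_{y^{-1}, *} \overset{I}{\star} -$. The standard identity $j_{y^{-1}, !} \overset{I}{\star} j_{y, *} \simeq j_{e, *}$ in the twisted Hecke category—obtained via a reduced expression for $y$ from the simple-reflection relation $j_{s, !} \overset{I}{\star} j_{s, *} \simeq j_{e, *}$—together with cleanness $j_{y^{-1}, *} \simeq j_{y^{-1}, !}$, yields $G \circ F \simeq \mathrm{id}$; symmetrically $F \circ G \simeq \mathrm{id}$, so $F$ is an equivalence of the ambient Whittaker categories.

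For block preservation, if $X \in \Whit_{y^{-1}\cdot\lambda, e}^{\aff}$ is supported on strata $Ix\fL N^-$ with $x \in W_{y^{-1}\cdot\lambda} W_f \cap \widetilde{W}^f$, then $F(X)$ is supported in the closure of strata indexed by $yx \in y W_{y^{-1}\cdot\lambda} W_f = W_\lambda y W_f$, hence $F(X)$ lies in $\Whit_{\lambda, y}^{\aff}$. A symmetric argument shows $G$ sends $\Whit_{\lambda, y}^{\aff}$ into the neutral block, so $F$ restricts to the asserted equivalence on blocks. As a sanity check on the distinguished generator, \eqref{e:convid} applied with $w = y \in \widetilde{W}^f$ gives $F(j_{e, *}^\psi) \simeq j_{y, *}^\psi$, which by Corollary \ref{c:minel} is the clean generator of $\Whit_{\lambda, y}^{\aff}$. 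For $t$-exactness, convolution with $j_{y, !}$ is right $t$-exact and with $j_{y, *}$ is left $t$-exact, so cleanness $j_{y, !} \simeq j_{y, *}$ implies $F$ is $t$-exact.

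The principal technical point is verifying the Hecke-category identity $j_{y^{-1}, !} \overset{I}{\star} j_{y, *} \simeq j_{e, *}$ and the asserted cleanness of $j_{y, *}$ and $j_{y^{-1}, *}$ in the twisted setting; both reduce to the Bruhat-minimality statements as in Lemma \ref{l:dcs} together with a stratum-by-stratum analysis of the $\mathbb{A}^1$-type geometry of simple-reflection cells, with careful tracking of the left and right twists across each convolution.
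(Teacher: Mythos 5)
Your overall strategy — exhibit $j_{y^{-1},*}$ as a convolution-inverse to $j_{y,*}$, then restrict to blocks — is the right one, and the paper in fact simply defers to \cite{ly} Proposition~5.2, which proceeds along exactly these lines (cleanness of the standard at a coset-minimal element, factorization along a reduced expression, and the resulting $\Hom$-matching). Your verification of invertibility and $t$-exactness via $j_{y,!}\simeq j_{y,*}$ is correct and matches the expected argument.

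The one genuine gap is in the block-preservation step. From ``$F(X)$ is supported in the closure of the strata $Iyx\fL N^-$ for $yx \in W_\lambda y W_f$'' you conclude ``hence $F(X)$ lies in $\Whit_{\lambda,y}^{\aff}$,'' but this inference is false as stated: supports of distinct blocks overlap heavily (every block contains objects supported at the closed stratum), so a support bound on $F(X)$ does not locate it in a summand of the direct-sum decomposition of Theorem~\ref{t:whit-blocks}. Being contained in the closure of $\bigcup_{z\in W_\lambda y W_f}Iz\fL N^-$ permits nonzero $!$-fibres at strata $Iz\fL N^-$ with $z\leqslant yx$ and $z\notin W_\lambda y W_f$. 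What actually closes this gap is the observation you relegate to a ``sanity check'': $F(j_{e,*}^\psi)\simeq j_{y,*}^\psi$, combined with the fact that convolution by $j_{y,*}$ intertwines the $\sH_{y^{-1}\cdot\lambda}$- and $\sH_{\lambda}$-actions (via the isomorphism $j_{y,*}\overset{I}{\star}-\overset{I}{\star}j_{y^{-1},*}$), and that by \eqref{e:moves1} each of the blocks $\Whit_{y^{-1}\cdot\lambda,e}^{\aff}$ and $\Whit_{\lambda,y}^{\aff}$ is generated under the respective Hecke action by its distinguished clean object. That equivariance, not the support estimate, is what forces $F$ to respect the block decompositions. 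So the proposal is essentially correct, but you should promote the equivariance argument from ``sanity check'' to the main content of the block-preservation step and drop the support-only inference.
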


\begin{proof}
The argument for \cite{ly} Proposition 5.2 applies
\emph{mutatis mutandis}.
\end{proof}

\subsection{Application of Whittaker-singular duality and the classification of 
good levels}

\subsubsection{}\label{sss:changeintlevel}We would like to relate an 
arbitrary block  
$\Whit_{\lambda, y}^{\aff}$ of $\Whit_{\lambda}^{\aff}$ to Kac--Moody 
representations. Via Proposition  
\ref{p:neutr}, we should apply Theorem \ref{t:whit-blocks} to $\Whit_{y^{-1} 
\cdot 
\lambda, e}^{\aff}$. Therefore, we would like to 
produce a suitable Verma module in 
\begin{equation} \label{e:thatoneeq} \gmk\mod^{I, y^{-1} \cdot \lambda}.
\end{equation}
It will useful to expand the collection of available Verma modules. For example,
if $-\kappa$ is positive rational and the twist $y^{-1} \cdot 
\lambda$ is trivial, there are no irreducible Verma modules in 
\eqref{e:thatoneeq}. To address this, 
note that for any integral level $\kappa'$ for $G$ one has a 
tautological $t$-exact equivalence
\[ D_{-\kappa}(I, y^{-1} \cdot \lambda \backslash \fL G / \fL N^-, \psi) \simeq 
D_{-\kappa + 
\kappa'}(I, y^{-1} \cdot \lambda \backslash \fL G / \fL N^-, \psi).
\]
We may further increase our supply of integral levels and characters as 
follows. Write $G_s$ for the simply 
connected form of $G$, and $I_s$ for the 
Iwahori subgroup of its loop group. Then the tautological embedding
\[ D_{-\kappa}( I_s, y^{-1} \cdot \lambda \backslash \fL G_s / \fL N^-, \psi) 
\rightarrow 
D_{-\kappa}(I, y^{-1} \cdot \lambda \backslash \fL G / \fL N^-, \psi)
\]
induces an equivalence of neutral blocks.

\subsubsection{}To analyze when we may find the desired Verma modules, we will 
need some basic properties of the relevant integral Weyl group. So, for an 
arbitrary level $\kappa_{\circ}$, let us denote by $W_{\fg, \kappa_{\circ}}$ 
the integral Weyl group of $0 \in \ft^*$ at level $\kappa_\circ$, i.e. what was 
denoted by
$W_0$ in the notation of \ref{sss:intweylgp}.

To describe the simple reflections in $W_{\fg, \kappa_{\circ}}$, recall the 
canonical identification of $W$ with the semidirect product
\[ W_f \ltimes \check{Q}.
\]
For any finite coroot $\halpha$ we denote by $s_{\halpha}$ the corresponding 
reflection in $W_f$, and for an element $\check{\lambda}$ of the coroot lattice 
$\check{Q}$ we write $t^{\check{\lambda}}$ for the corresponding translation in 
$W$. In addition, let us write $\check{\theta}_s$ for the short dominant 
coroot, $\check{\theta}_l$ for the long dominant coroot, and $r$ for the lacing 
number of $\fg$.  

\begin{lemma} \label{l:simprefs}For any $\kappa_{\circ}$ and integral level 
$\kappa'$, there are 
canonical identifications
	\begin{equation} \label{e:coxeq1}
	W_{\fg, \kappa_{\circ}} \simeq W_{\fg, -\kappa_{\circ}} \simeq W_{\fg, 
	\kappa_{\circ} + \kappa'}
	\end{equation}
	intertwining their inclusions into $W$. If $\kappa_{\circ}$ is irrational, 
	then $W_{\fg, \kappa_{\circ}} 
\simeq W_f$, i.e., they coincide as subgroups of $W$. If $\kappa_{\circ}$ is 
rational, write it as  
\[ \kappa_{\circ} = (-h^\vee + \frac{p}{q})\hspace{1mm} \kappa_b,
\]	
where $h^\vee$ is the dual Coxeter number, $p$ and $q$ are coprime integers, 
and $\kappa_b$ is the basic level. Then $W_{\fg, \kappa_{\circ}}$ has simple 
reflections given by the simple reflections 
of $W_f$ and the additional reflection
\begin{equation} \label{e:cases} s_{0, \kappa_\circ} = \begin{cases} 
s_{\check{\theta}_s} t^{-q 
	\check{\theta}_s }, & \text{if } (q, r) = 1 \\[2mm] 
s_{\check{\theta}_l} 
t^{-q/r 
	\check{\theta}_l }, & \text{if } (q, r) = r \end{cases}.
\end{equation}
\end{lemma}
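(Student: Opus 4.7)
The plan is to proceed by a direct computation of the integrality condition on affine real coroots. Recall from Sections \ref{sss:intweylgp} and \ref{sss:affcoroots} that $W_{\fg, \kappa_\circ}$ is the subgroup of $\widetilde{W}$ generated by reflections $s_{\halpha}$ in affine real coroots $\halpha$ satisfying $\langle \halpha, 0 \rangle \in \mathbb{Z}$, where the pairing is the affine functional on $\ft^*$ determined by the level $\kappa_\circ$. In the untwisted setting, every affine real coroot takes the form $\halpha_{cl} + n K_{\halpha_{cl}}$, with $\halpha_{cl}$ a finite coroot, $n \in \mathbb{Z}$, and $K_{\halpha_{cl}}$ the appropriately normalized central element. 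Using the explicit formulas recalled in \cite{lpw} Sections 3.1 and 3.4, one computes that $\langle \halpha_{cl} + n K_{\halpha_{cl}}, 0\rangle$ is, modulo an integer depending only on $\halpha_{cl}$, a rational multiple of $n(\kappa_\circ - \kappa_c)(\halpha_{cl}, \halpha_{cl})$, with normalization governed by $(\halpha_{cl}, \halpha_{cl})$ in the basic form.

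The irrational case is then immediate: if $\kappa_\circ$ is irrational, this expression is irrational whenever $n \neq 0$, so only the finite coroots satisfy the integrality condition, yielding $W_{\fg, \kappa_\circ} = W_f$. For rational $\kappa_\circ = (-h^\vee + p/q)\kappa_b$ with $(p, q) = 1$, the condition simplifies to a divisibility condition on $n$, namely $q \mid n$ for short coroots $\halpha_{cl}$ and $q/(q, r) \mid n$ for long coroots, the factor of $r$ reflecting the ratio of squared lengths between long and short coroots in the basic normalization $\kappa_b$.

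Next I would invoke the standard fact that a subgroup of an affine Weyl group generated by reflections in a root subsystem is itself a Coxeter group, with simple reflections corresponding to the indecomposable positive coroots of that subsystem. The finite simple reflections $s_i$ for $i \in \mathscr{I}$ are trivially integral and indecomposable, so they generate $W_f \subset W_{\fg, \kappa_\circ}$. It remains to identify one additional simple reflection. In the case $(q, r) = 1$, the minimal positive integral affine coroot outside the finite system is $\check{\theta}_s + q K_{\check{\theta}_s}$, whose associated reflection is $s_{\check{\theta}_s} t^{-q \check{\theta}_s}$ in the presentation $W = W_f \ltimes \check{Q}$. In the case $r \mid q$, the long coroots contribute the strictly smaller positive integral coroot $\check{\theta}_l + (q/r) K_{\check{\theta}_l}$, yielding $s_{\check{\theta}_l} t^{-(q/r)\check{\theta}_l}$. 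This reproduces formula \eqref{e:cases}.

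Finally, the canonical identifications $W_{\fg, \kappa_\circ} \simeq W_{\fg, -\kappa_\circ} \simeq W_{\fg, \kappa_\circ + \kappa'}$ follow by inspection: writing $-\kappa_\circ = (-h^\vee + (2qh^\vee - p)/q)\kappa_b$ and $\kappa_\circ + \kappa' = (-h^\vee + (p + nq)/q)\kappa_b$ for $\kappa' = n \kappa_b$, we see that $q$ and hence $(q, r)$ are preserved while $p$ is replaced by another integer still coprime to $q$. Hence the data determining the simple reflections is unchanged, and these integral Weyl groups literally coincide as subgroups of $W$. I expect the main technical obstacle to lie in the third step: verifying that the proposed extra coroot is indeed \emph{indecomposable} among positive integral affine coroots, which requires a careful comparison against both the finite simple coroots and other positive integral affine coroots, and depends crucially on the divisibility relation between $q$ and the lacing number $r$.
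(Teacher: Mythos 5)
Your proof follows essentially the same route as the paper's: both compute the integrality condition via the explicit formula for the affine real coroots $\halpha_n = \halpha + n\,\kappa_\circ(\halpha,\halpha)/2\cdot\mathbf{1}$, extract the divisibility conditions $q\mid n$ (short coroots) and $\tfrac{q}{(q,r)}\mid n$ (long coroots), and then read off the extra simple reflection and the invariance under $\kappa_\circ\mapsto -\kappa_\circ$ and $\kappa_\circ\mapsto\kappa_\circ+\kappa'$. The paper compresses the identification of the simple reflections into the phrase ``straightforwardly implies the claims,'' whereas you correctly isolate the remaining Coxeter-theoretic step (indecomposability of the extra coroot within the integral subsystem) as the residual technical point; that is the right thing to flag, and otherwise your argument lines up with the paper's.
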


\begin{proof} Recall the standard enumeration of the affine real coroots as 
	\[\check{\Phi} 
	\simeq \check{\Phi}_f 
	\times \mathbb{Z},\] where $\check{\Phi}_f$ denotes the finite  
	coroots. With 
	this 
	enumeration, the 
	element $\halpha_n \in \ft \oplus \mathbb{C} \mathbf{1}$, for $\halpha \in 
	\check{\Phi}_f$ 
	and $n 
	\in \mathbb{Z}$, is given by 
	\[ \halpha + n \frac{\kappa_{\circ}(\halpha, \halpha)}{2} \hspace{1mm} 
	\mathbf{1}.
	\]
	In particular $\halpha_n$ belongs to $W_{\fg, \kappa_\circ}$ if and only if 
	\[ \langle \halpha_n, 0 \rangle = n\frac{\kappa_\circ( \halpha, 
		\halpha)}{2}\]
	is an integer, which straightforwardly implies the claims of the lemma. 
\end{proof}

\subsubsection{}\label{sss:good-defin}
Having explicitly identified the Coxeter generators of the integral Weyl group, 
we will now obtain for 
most levels highest weights with prescribed stabilizers within it. Let us 
formulate this problem precisely. If we write $\Lambda_{G_s}$ for the weight 
lattice, and 
recall that $I_s$ denotes the Iwahori subgroup of $\fL G_s$, then
\[\widehat{\fg}_{\kappa_{\circ}}\mod^{I_s}\]
is compactly generated by the Verma modules $M_\lambda$, for $\lambda \in 
\Lambda_{G_s}$. 
In particular, this category has highest weights consisting of the weight 
lattice. Let us say a level $\kappa_{\circ}$ is {\em good} if for any finite
parabolic subgroup $W_\circ$ of $W_{\fg, \kappa_\circ}$ there exists 
an integral 
level $\kappa'$ and a simple Verma module 
\[ M_\nu \in \widehat{\fg}_{\kappa_{\circ} + \kappa'}\mod^{I_s}
\] 
whose highest weight has stabilizer $W_\circ$. Let us classify the good 
levels. 

\begin{pro}\label{p:adapt} Every irrational level is good. A rational level
	\[ \kappa_\circ = (-h^\vee + 
	\frac{p}{q}\hspace{1mm})\hspace{1mm}\kappa_b, 
	\]
	where $p$ and $q$ are coprime integers, is good if and only if $q$ is 
	coprime to the number $n(\fg)$ associated to 
	$\fg$ in Figure \ref{f:badprimes}. 
\end{pro}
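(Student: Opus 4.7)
My plan is to treat the irrational and rational cases separately, using the explicit description of the Coxeter generators of $W_{\fg,\kappa_\circ}$ provided by Lemma \ref{l:simprefs}. For $\kappa_\circ$ irrational, $W_{\fg,\kappa_\circ} = W_f$, so every finite parabolic subgroup is conjugate to a standard parabolic $W_I$ for some $I \subset \mathscr{I}$. After such conjugation, I take $\nu := -\rho - \sum_{i \notin I} \omega_i \in \Lambda_{G_s}$: a direct check shows its dot-stabilizer in $W_f$ is exactly $W_I$, and irrationality of $\kappa_\circ$ prevents any positive real affine coroot from pairing with $\nu+\rho$ to yield a positive integer, so $M_\nu$ is simple by the Kac--Kazhdan criterion. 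No integral level shift is required in this case.

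For rational $\kappa_\circ = (-h^\vee + p/q)\kappa_b$, the group $W_{\fg,\kappa_\circ}$ is an affine Coxeter group, with simple reflections $s_1,\ldots,s_n$ from $W_f$ together with the additional reflection $s_{0,\kappa_\circ}$ of Lemma \ref{l:simprefs}. Any finite parabolic subgroup is conjugate, via an element of $W_{\fg,\kappa_\circ}$, to some standard parabolic $W_J$ for a proper subset $J \subsetneq S := \{s_0, s_1, \ldots, s_n\}$. Since $W_{\fg,\kappa_\circ}$ preserves $\Lambda_{G_s}$ up to translations that can be absorbed into an integral level twist $\kappa' = m\kappa_b$, the problem reduces to realizing each $W_J$ as the dot-stabilizer of some $\nu \in \Lambda_{G_s}$ at level $\kappa_\circ + \kappa'$. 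The conditions from finite reflections $s_i$, $i \in J \cap \mathscr{I}$, amount to $\langle \halpha_i, \nu + \rho\rangle = 0$, which are standard linear equations solvable in $\Lambda_{G_s}$ via fundamental weights. Unwinding the dot action with the formulas of Lemma \ref{l:simprefs}, the condition from $s_{0,\kappa_\circ} \in J$ becomes a single affine equation equating a $\mathbb{Z}$-linear form in $\nu + \rho$ (built from $\check\theta_s$ in Case 1, or from $\check\theta_l$ in Case 2) to a scalar proportional to $P := p + mq$, where $m$ is at our disposal.

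Sufficiency will follow by choosing $m$ so that $P$ falls in a prescribed residue class modulo $q$ compatible with $p$, then solving the resulting combined linear-affine system over $\Lambda_{G_s}$: this is possible precisely when $q$ is coprime to the integer coefficients arising from expanding the relevant highest coroot into simple coroots, coefficients whose prime divisors make up $n(\fg)$. Once $\nu$ is placed on the prescribed walls, one further adds a strictly antidominant combination $\sum_{j \notin J} c_j \omega_j$ with $c_j$ sufficiently negative to ensure both the remaining strict wall inequalities and, by Kac--Kazhdan, the simpleness of $M_\nu$. Necessity is then proven in reverse: if a prime $\ell$ divides both $q$ and a coefficient $n_i$ in the expansion of $\check\theta_s$ or $\check\theta_l$, taking $J := S \setminus \{s_i\}$ produces a system whose solvability over $\Lambda_{G_s}$ would force a denominator of $\ell$, obstructing the construction. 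I expect the principal obstacle to be the necessity direction, which requires a careful case analysis in terms of the type of $\fg$ to identify, for each bad prime $\ell$ dividing $q$, the obstructing parabolic subgroup and to verify the forced denominator; the sufficiency direction is essentially B\'ezout's lemma combined with the fact that $\Lambda_{G_s}$ is the full weight lattice.
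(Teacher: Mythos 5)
Your approach tracks the paper's closely: both reduce to producing a simple Verma module whose highest weight has a prescribed standard parabolic stabilizer $W_J \subset W_{\fg,\kappa_\circ}$, both use the freedom to shift $p \rightsquigarrow p + mq$ via integral level twists, and both read off the obstruction from the coefficients $n_i$ (resp. $m_i$) of $\check{\theta}_s$ (resp. $\check{\theta}_l$). Your irrational case and your necessity argument are fine and essentially match the paper's. However, the sufficiency direction has a genuine gap. Your two-step procedure --- (i) place $\nu$ on the walls of $J$, then (ii) add $\sum_{j\notin J}c_j\omega_j$ with $c_j<0$ to enforce the remaining strict inequalities --- breaks whenever $s_0 \in J$: since $\langle\check{\theta}_s, \omega_j\rangle = n_j > 0$ for every finite $j$, step (ii) changes $\langle\check{\theta}_s, \nu+\rho\rangle$ and knocks $\nu$ off the affine wall you just arranged in step (i). The repair is to solve for the $c_j$ and the integral shift $m$ \emph{simultaneously}: one needs integers $c_j < 0$ for $j \notin J$ and $m \in \mathbb{Z}$ with $\sum_{j\notin J}n_j c_j = -(p+mq)$, which is solvable exactly when $\gcd(\{n_j : j\notin J\},\,q)=1$; running over the maximal choices $J = S\setminus\{s_i\}$ recovers the stated criterion. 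The paper's version avoids this pitfall by phrasing everything geometrically: it produces a lattice point of $\Lambda_{G_s}$ in the relative interior of each face of the simplex with vertices $0$ and $\tfrac{p}{n_i}\omega_i$, so the equalities and strict inequalities are handled in a single stroke.

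A secondary issue: ``choosing $m$ so that $P$ falls in a prescribed residue class modulo $q$'' is vacuous, since $P = p+mq \equiv p \pmod{q}$ for every $m$. What varying $m$ actually controls is the residue of $P$ modulo the relevant $n_i$, and the constraint $n_i \mid P$ is solvable in $m$ precisely when $\gcd(n_i,q)=1$, again because $\gcd(p,q)=1$.
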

\begin{center}
	\begin{figure}

		\begin{tabular}{ |m{2cm}  | m{2cm} | } 
			
			\hline
			\vspace{2mm}
			$\hspace{.9cm} \fg$ \vspace{2mm} & \vspace{2mm} 
			$\hspace{.8cm}n(\fg)$\vspace{2mm} 
			\\ 
			\hline
			
			\vspace{2mm} $\hspace{.9cm}A_n$ 	& \vspace{2mm} 
			$\hspace{.9cm}1$  
			\\[2mm] 
			$\hspace{.9cm}B_n$ 	& $\hspace{.9cm}2$  \\[2mm] 
			$\hspace{.9cm}C_n $	& $\hspace{.9cm}2$  \\[2mm] 
			$\hspace{.9cm}D_n $	& $\hspace{.9cm}2$ \\[2mm] 
			$\hspace{.9cm}E_6 $	& $\hspace{.7cm}2 \cdot 3$ \\[2mm]
			$\hspace{.9cm}E_7 $	& $\hspace{.7cm}2 \cdot 3$ \\[2mm]
			$\hspace{.9cm}E_8$  & $\hspace{.5cm}2 \cdot 3 \cdot 5$ \\[2mm]
			$\hspace{.9cm}F_4$ 	& $\hspace{.7cm}2 \cdot 3$ \\[2mm] 
			$\hspace{.9cm}G_2 $	& $\hspace{.7cm}2 \cdot 3 $\\[2mm]
			\hline
			
		\end{tabular}
		\caption{Bad primes for each simple Lie algebra}
		
		\label{f:badprimes}
	\end{figure}
\end{center}

\begin{proof} For $\kappa_{\circ}$ irrational, the claim is clear as $W_{\fg, 
		\kappa_{\circ}} \simeq W_f$. For $\kappa_{\circ}$ rational, which we 
	may take 
	to be negative, a weight $\lambda \in \Lambda_{G_s}$ is antidominant if and 
	only if 
	\[ \langle \lambda + \rho, \halpha_i \rangle \leqslant 0, 
	\quad 
	\text{for } i \in \mathscr{I}, \quad \quad  \text{and } \quad \quad 
	\begin{cases} 
	\langle  \lambda + \rho, \check{\theta}_s \rangle \geqslant -p , & \text{if 
	} 
	(q,r) = 1, \\ \langle  \lambda + \rho, \check{\theta}_l 
	\rangle \geqslant -p, & \text{if } (q,r) = r, \end{cases}
	\]
	as follows from \eqref{e:cases}. Let us write $\omega_i,$ for $i \in 
	\mathscr{I}$, for the 
	fundamental 
	weights, and write the dominant coroots as sums of simple coroots
	\[ \check{\theta}_s = \sum_{i \in \mathscr{I}} n_i \halpha_i, \quad \quad 
	\check{\theta}_l 
	= \sum_{i \in \mathscr{I}} m_i \halpha_i, \quad \quad \text{for } n_i, m_i 
	\in 
	\mathbb{Z}^{\geqslant 
		0}.
	\] 	
	Recall the standard correspondence between finite parabolic subgroups 
	$W_\circ$ of $W_{\fg, \kappa_{\circ}}$ and nonempty faces of the above 
	alcove, which 
	associates to a face the stabilizer of any interior point. It follows that, 
	after the transformation $\lambda \mapsto -\lambda - 
	\rho$, we 
	are looking for points of $\Lambda_{G_s}$ within the alcove with 
	vertices at
	\[ 0  \quad  \text{ and }  \quad \begin{cases} \frac{p}{n_i} \hspace{1mm}
	\omega_i, \quad \text{for } i \in \mathscr{I}, 
	& \text{if } (q,r) = 
	1, \\[2mm] \frac{p}{m_i} \hspace{1mm} \omega_i,  \quad \text{for } i \in 
	\mathscr{I}, 
	&\text{if } (q,r) 
	= r. 
	\end{cases}
	\]
	Recalling that we are free to replace $p$ by any element of $p + q 
	\mathbb{Z}$, 
	it is straightforward to see that we can find points of $\Lambda_{G_s}$ in 
	the 
	interior of every face of the alcove if and only if for each $i \in 
	\mathscr{I}$ one has%
	\[ \begin{cases} p \in (n_i, q), \quad \text{for } i \in \mathscr{I}, & 
	\text{if } (q,r) = 1, \\ p \in (m_i, q),\quad \text{for } i \in 
	\mathscr{I},  
	& 
	\text{if } (q,r) = r. \end{cases} 
	\]
	To see this, note these conditions are tautologically equivalent to being 
	able to realize each vertex of the alcove as a point of $\Lambda_{G_s}$, 
	hence they are necessary. To see they are sufficient, suppose they are 
	satisfied. Via this assumption, for any positive integer $N$ we may replace 
	$p$ with an element 
	of $p + q \mathbb{Z}$ so that 
	\[
	\frac{p}{n_i} \in \mathbb{Z}^{\geqslant N}, \quad \text{for all } i \in 
	\mathscr{I}.
	\]
	In particular, we may assume that $N$ is greater than the number of 
	vertices of the alcove, i.e. $N > \lvert I \rvert + 1$. In this case, 
	every face of the alcove contains an interior point expressible as a convex 
	combination of the vertices with coefficients in $\frac{1}{N}\mathbb{Z}$. 
	As such a convex combination is a point of $\Lambda_{G_s}$, we are done.

	Finally, recalling the $n_i$ and $m_i$ for each type, cf. Plates I-IX of 
	\cite{bour}, yields the entries of Figure \ref{f:badprimes}. Namely, 
	if $n_i$ is prime, $p \in (n_i, q)$ if and only if $(n_i, q) = 1$, and if 
	$n_i$ 	is composite, then each of its prime divisors occurs as another 
	$n_{i'}$.  
	\end{proof} 
	
\subsubsection{}

In what follows, we will be most concerned with the Whittaker category
on $\fL G/I$, i.e., with $\Whit_{\lambda}^{\aff}$ for $\lambda = 0$.
In this case, we replace the notation $\lambda$ by the level $\kappa$.

In other words, $\Whit_{\kappa}^{\aff} \coloneqq \Whit_0^{\aff}$.
We similarly denote the summands $\Whit_{0,y}^{\aff}$ 
of this category considered in
Theorem \ref{t:whit-blocks} by $\Whit_{\kappa,y}^{\aff}$.

\subsubsection{}

Let us obtain, for good levels $\kappa_\circ$, the Kac--Moody realization of 
blocks of the Whittaker category. Fix a double coset, whose minimal length 
element we denote by $y$,  in
\[ W_{\fg, \kappa_{\circ}} \backslash \widetilde{W} / W_f.
\] 

\begin{cor}\label{c:wtokm} If $\kappa_{\circ}$ is good, then for any $y$ as 
	above the 
	corresponding  block $\Whit_{\kappa, y}^{\aff}$ admits an
	equivalence with a block of $\widehat{\fg}_{\kappa_{\circ} + 
		\kappa'}\mod^{I, y^{-1} \cdot 0},$
	for some integral level $\kappa'$. 
	
\end{cor}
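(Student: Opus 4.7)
The plan is to reduce to the neutral-block setting of Theorem \ref{t:whit-km-neut}, after an integral level shift chosen to make the appropriate Verma module available. First, I would invoke Proposition \ref{p:neutr}: convolution with the clean object $j_{y, *}$ gives a $t$-exact equivalence $\Whit_{y^{-1} \cdot 0, \, e}^{\aff} \xrightarrow{\sim} \Whit_{\kappa_\circ, y}^{\aff}$, reducing the problem to treating the neutral block at the twist $\lambda \coloneqq y^{-1} \cdot 0$.

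Next, I would identify the combinatorial input required by Theorem \ref{t:whit-km-neut}, namely a weight $\Theta$ in $\lambda + \Lambda_G$ whose $W_\lambda$-stabilizer under the dot action is exactly $W_\lambda \cap W_f$. By the proof of Lemma \ref{l:dcs}, conjugation by $y^{-1}$ is an isomorphism of Coxeter systems $W_{\fg, \kappa_\circ} \simeq W_\lambda$ which carries the finite parabolic $y W_f y^{-1} \cap W_{\fg, \kappa_\circ}$ to $W_\lambda \cap W_f$. In particular, $W_\lambda \cap W_f$ is a finite parabolic subgroup of $W_\lambda$, and equivalently the sought-for stabilizer is encoded by a finite parabolic of $W_{\fg, \kappa_\circ}$.

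Now I would apply the goodness of $\kappa_\circ$. By Lemma \ref{l:simprefs}, for any integral level $\kappa'$ the Coxeter system $W_{\fg, \kappa_\circ + \kappa'}$ is canonically identified with $W_{\fg, \kappa_\circ}$ intertwining their embeddings into $W$, so the notion of finite parabolic subgroup does not depend on such shifts. The goodness hypothesis then produces an integral $\kappa'$ together with a simple Verma module $M_\nu \in \widehat{\fg}_{\kappa_\circ + \kappa'}\mod^{I_s}$ whose highest weight has the prescribed finite parabolic as stabilizer. Translating $\nu$ by an element of $\Lambda_G$ to land in $\lambda + \Lambda_G$ and passing from the simply connected to the adjoint form, I obtain the required $\Theta$. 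Combining the tautological $t$-exact equivalence of Section \ref{sss:changeintlevel} with Theorem \ref{t:whit-km-neut} applied to this $\Theta$ delivers the equivalence with the block of $\widehat{\fg}_{\kappa_\circ + \kappa'}\mod^{I, \lambda}$ generated by $M_\Theta$.

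The only substantive obstacle is matching the highest-weight combinatorics exactly, and this is precisely what isolates the class of good levels: Proposition \ref{p:adapt} shows that for bad levels the fundamental alcove contains no point of $\Lambda_{G_s}$ in the interior of the relevant face, so such a $\Theta$ simply does not exist. The good-level hypothesis is tailored to rule out exactly this failure, and once it is in hand the rest of the argument is a direct synthesis of Proposition \ref{p:neutr}, Lemma \ref{l:dcs}, Lemma \ref{l:simprefs}, and Theorem \ref{t:whit-km-neut}.
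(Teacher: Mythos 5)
Your proof follows the paper's argument in essentially the same way: reduce to the neutral block via Proposition \ref{p:neutr}, use Lemma \ref{l:dcs} (and the positivity property of $y$) to identify the required stabilizer $W_\lambda \cap W_f$ with the finite parabolic $W_{\fg,\kappa_\circ}\cap yW_f y^{-1}$, and then invoke the good-level hypothesis (via Proposition \ref{p:adapt} together with the Coxeter system identification of Lemma \ref{l:simprefs}) to produce the needed simple Verma module; you also correctly read the paper's apparent typo citing Theorem \ref{t:whit-blocks} where Theorem \ref{t:whit-km-neut} is meant.

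One small caution: the phrase ``translating $\nu$ by an element of $\Lambda_G$ to land in $\lambda + \Lambda_G$ and passing from the simply connected to the adjoint form'' is slightly misleading. In general $\nu - \lambda$ need not lie in $\Lambda_G$, so no such translation exists. The paper's actual mechanism, spelled out in Section \ref{sss:changeintlevel}, is to pass the Whittaker side from $G$ to $G_s$ (an equivalence of neutral blocks), apply the tautological integral level shift, and then run Theorem \ref{t:whit-km-neut} entirely in the $G_s$ setting with the $I_s$-integrable Verma module provided by goodness. It is the enlargement of the Whittaker category (equivalently, passage to $I_s$-equivariance on the Kac--Moody side), not a translation of the weight, that makes the required $\Theta$ available. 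With that phrasing adjusted, your argument coincides with the paper's.
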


\begin{proof} By Proposition \ref{p:neutr} and Theorem \ref{t:whit-blocks}, it 
is 	enough to produce a simple Verma module in 
	\[\widehat{\fg}_{\kappa_{\circ} + 
		\kappa'}\mod^{I, y^{-1} \cdot 0}\]
	whose (antidominant) highest weight $\Theta$ has stabilizer under the dot 
	action of $W$ 
	given 
	by 
	\[y^{-1} W_{\fg, \kappa_{\circ}} y \hspace{1mm} \cap \hspace{1mm} W_f.	\]
	By \eqref{e:posy}, it is equivalent to produce a simple Verma module 
	in 
	\[\widehat{\fg}_{\kappa_{\circ} + 
		\kappa'}\mod^{I} \]
	whose highest weight $y \cdot \Theta$ has stabilizer given by 
	\begin{equation} \label{e:stab} W_{\fg, \kappa_{\circ}} \cap \hspace{1mm} 
	yW_fy^{-1}. \end{equation}
	The latter is provided by Proposition \ref{p:adapt}, as desired. 
\end{proof}

\begin{re}\label{r:sf-2}

As in Remark \ref{r:sf-1}, and at the risk of 
redundancy, we emphasize: 
by construction, the integral Weyl group of the above block
is identified as a Coxeter system with $W_{\fg,\kappa}$, and the stabilizer of 
$\Theta$ in 
$W_{\fg,\kappa}$ is 
$W_{\fg, \kappa_{\circ}}  \cap yW_fy^{-1}$.

\end{re}

\begin{re} We recall that our running assumption in this section is that 
$\kappa$ is negative, as will be important for the arguments when we get to the 
fundamental local equivalences. However,  everything until this point, in 
particular our analysis of the twisted Whittaker categories, applies 
to an arbitrary level $\kappa$. 
\end{re}

\subsection{From $\widehat{\fg}$-modules to $\widehat{\check{\fg}}$-modules}

\subsubsection{}
To relate blocks of Category $\mathscr{O}$ for $\gmk$ and $\cgk$, we would like 
to use the following result of Fiebig.

Let $\mathfrak{k}$ be an affine Lie algebra with Cartan subalgebra $\fh$. Fix  
$\alpha \in \fh^*$ such that the Verma module $M_\alpha$ is 
simple, and write $\OO_\alpha$ for the corresponding block of Category 
$\mathscr{O}$ for $\mathfrak{k}$. Let us write $W_\alpha$ for the integral Weyl 
group of $\alpha$, and $W_\alpha^\circ$ for its subgroup stabilizing $\alpha$ 
under the dot action.

\begin{theo}[\cite{fiebig} Thm. 4.1]\label{t:fiebig} 
As an abelian highest weight 
category, $\OO_\alpha$ is determined up to equivalence by the Coxeter system 
$W_\alpha$ along with its subgroup $W_\alpha^\circ$. \end{theo}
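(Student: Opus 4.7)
The plan is to follow the approach of Soergel and Fiebig, which reconstructs $\OO_\alpha$ as an abelian highest weight category from endomorphism rings of projective covers, and then identifies these endomorphism rings with combinatorial invariants built from the pair $(W_\alpha, W_\alpha^\circ)$.

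First I would reduce to the case where $\alpha$ is antidominant. Assuming $M_\alpha$ is simple, there is a well-defined notion of antidominant weight in the $W_\alpha$-orbit, and using the dot action one can move $\alpha$ to the unique antidominant element of $W_\alpha\cdot \alpha$ stabilized by $W_\alpha^\circ$; since the block $\OO_\alpha$ depends only on the orbit, this reduction is harmless. Next I would truncate: for each finite upward-closed subset $\Xi$ of the $W_\alpha$-orbit (with respect to the order given by $W_\alpha^\circ$-cosets), one has the truncated subcategory $\OO_\alpha^{\Xi}$ consisting of modules whose weights lie in $\Xi$. This subcategory is a genuine highest weight category with finitely many simples, and it admits a projective generator $P^\Xi$, obtained as the direct sum of projective covers of the simple highest weight modules. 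The whole category $\OO_\alpha$ is recovered as the filtered colimit over $\Xi$.

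The heart of the argument is to give a combinatorial description of $\End(P^\Xi)$ depending only on $(W_\alpha, W_\alpha^\circ)$. Following Soergel, one introduces a deformation of $\OO_\alpha$ over the symmetric algebra $S = \Sym(\fh)$ (or a suitable localization), so that each projective cover lifts to a deformed projective $P^\Xi_S$. The key structural input is Soergel's ``Struktursatz'': the deformed Hom-functor $\VV := \Hom(P^\Xi_S, -)$ is fully faithful on projectives, and its essential image is identified with the category of \emph{Braden--MacPherson sheaves} on the moment graph $\mathcal{G}(W_\alpha, W_\alpha^\circ)$ associated to the Coxeter system $W_\alpha$ and the parabolic subgroup $W_\alpha^\circ$. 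This moment graph is built purely from the combinatorics of $(W_\alpha, W_\alpha^\circ)$: its vertices are cosets $W_\alpha/W_\alpha^\circ$ and its edges are labelled by reflection hyperplanes. Specializing the deformation parameter, $\End(P^\Xi)$ is thus recovered from moment-graph data.

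Once $\End(P^\Xi)$ is intrinsically combinatorial, the category $\OO_\alpha^\Xi$ is equivalent to finitely generated modules over $\End(P^\Xi)$, and this equivalence matches standard/costandard/simple objects in the two descriptions, so the highest weight structure is preserved. Passing to the colimit over $\Xi$ gives the description of all of $\OO_\alpha$ from $(W_\alpha, W_\alpha^\circ)$.

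The main obstacle, and the part requiring real work, is the Struktursatz for Kac--Moody Category $\OO$: establishing that the deformed Hom-functor is fully faithful on projectives and that its image coincides with Braden--MacPherson sheaves. In finite type this is Soergel's original theorem; in the affine/Kac--Moody setting this is exactly what \cite{fiebig} accomplishes, and its extension beyond integral weights to arbitrary $\alpha$ (allowing subgeneric blocks and non-crystallographic integral Weyl groups in the rational case) is the substantive content. Given that result, the reconstruction of $\OO_\alpha$ from $(W_\alpha, W_\alpha^\circ)$ follows formally.
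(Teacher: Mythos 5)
The paper does not actually prove this statement: it is stated as a citation (\cite{fiebig}, Theorem~4.1) and used as a black box. What you have written is a reasonable outline of the argument that Fiebig himself gives — reduce to the antidominant representative of the orbit, truncate to co-finite subsets admitting projective generators, deform over a completion of $\Sym(\fh)$, and apply a Kac--Moody Struktursatz to compute the endomorphism ring of the deformed projective generator as an object that depends only on the combinatorics of $(W_\alpha, W_\alpha^\circ)$. So you are not taking a different route from the paper; you are filling in the proof that the paper deliberately outsources. One small anachronism: in the cited 2006 paper Fiebig works with the structure algebra of the orbit and its special modules rather than explicitly with Braden--MacPherson sheaves on the moment graph (the latter formalism appears in his subsequent work), though the two are equivalent and the moment graph is the structure you describe.

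There is, however, a point your sketch skips over that matters for the way this theorem is actually used in the paper. Fiebig's theorem is stated for Category $\OO$ over symmetrizable Kac--Moody algebras, which for an affine algebra includes the degree operator $t\partial_t$. The category appearing in the paper is Category $\OO$ for the affine Lie algebra $\gmk$ (or $\cgk$) \emph{without} the degree operator. The paper handles this in the remark following Theorem~\ref{t:fiebig}: at any noncritical level, one realizes Category $\OO$ for the affine Lie algebra as a Serre subcategory of Category $\OO$ for the full affine Kac--Moody algebra by having $t\partial_t$ act via the semisimple part of $-L_0$, where $L_0$ is the Segal--Sugawara operator. Your reconstruction should acknowledge this bridging step, since without it Fiebig's theorem does not literally apply to the categories $\gmk\mod^{I,\lambda}_\Theta$ and $\cgk\mod^{\check{I}}_{\check{\nu}}$ that the paper wants to compare.
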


\begin{re} Theorem \ref{t:fiebig}, as written in {\em loc. cit.}, applies to 
symmetrizable Kac--Moody algebras, and in particular affine Kac--Moody algebras. 
Recall the latter consists of a Laurent polynomial version of the affine Lie 
algebra along with a degree operator $t \partial_t$. One may apply it to the 
present situation as follows. At any 
noncritical level, Category $\mathscr{O}$ for the affine Lie algebra 
canonically embeds as a Serre subcategory of Category $\mathscr{O}$ for the 
affine Kac--Moody algebra. Namely, one sets $t \partial_t$ to act by the 
semisimple part of $-L_0$, where $L_0$ is the Segal--Sugawara energy operator.  
\end{re}

\subsubsection{}
To apply Theorem \ref{t:fiebig} in our situation, we must relate $W_{-\kappa, 
\fg}$ and 
$W_{\check{\kappa}, 
\check{\fg}}$.

To do so, fix a level $\kappa_{\circ}$ for $\fg$. Let us write 
$\check{Q}$ for the coroot lattice and $Q$ for the 
root lattice of $\fg$. Associated to $\kappa_\circ$ is a map 
\[ (\kappa_\circ - {\kappa_{\fg,c}}): \check{Q} \rightarrow Q 
\otimes_{\mathbb{Z}} 
\mathbb{C},
\]
where $\kappa_{\fg, c}$ denotes the critical level for $\fg$. In particular we 
may 
consider 
the 
sublattice of $\check{Q}$ given by 
\[
\check{Q}_{\kappa_\circ} := \{ \check{\lambda} \in \check{Q}: (\kappa_\circ - 
\kappa_{\fg, c}) 
(\check{\lambda}) \in Q \}.
\]
Suppose that $\kappa_\circ$ is noncritical. Recall that, if we write 
$\check{\kappa}_{\check{\fg}, c}$ for the critical level for $\check{\fg}$, 
then by 
definition, the dual nondegenerate bilinear form to $\kappa_{\circ} - 
\kappa_{\fg, c}$ 
is $\check{\kappa}_\circ - \check{\kappa}_{\check{\fg},c}$. It follows we have 
a canonical identification
\begin{equation} \label{e:coxeq3} (\kappa_\circ - \kappa_{\fg, c}):  
\check{Q}_{\kappa_\circ} 
\simeq Q_{\check{\kappa}_\circ}: 
(\check{\kappa}_\circ - \check{\kappa}_{\check{\fg}, c}).
\end{equation}

\subsubsection{}
With this, we may canonically identify the integral Weyl groups on the opposite 
sides of quantum Langlands duality. 

\begin{theo}\label{t:intw} For any level $\kappa_\circ$, under the 
identification $W \simeq W_f \ltimes \check{Q}$ one has
\begin{equation} \label{e:subafw}
W_{\fg, \kappa_\circ} \simeq W_f \ltimes \check{Q}_{\kappa_\circ},
\end{equation}
	 Moreover, for a noncritical 
	level $\kappa_\circ$, there is a canonical isomorphism of Coxeter systems
	\begin{equation}\label{e:coxeq2} W_{\fg, \kappa_\circ} \simeq 
	W_{\check{\fg}, 
		\check{\kappa}_\circ}.
	\end{equation}
\end{theo}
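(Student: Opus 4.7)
The plan is to deduce both assertions from the explicit description of simple reflections provided by Lemma \ref{l:simprefs}.

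For the first identification \eqref{e:subafw}, I would first note that $W_f \subset W_{\fg, \kappa_\circ}$, since its simple reflections correspond to finite coroots whose pairing with $0$ is automatically integral. Hence $W_{\fg, \kappa_\circ}$ decomposes as $W_f$ acting semidirectly on a sublattice $L_{\kappa_\circ} \subset \check{Q}$, and the task is to identify $L_{\kappa_\circ}$ with $\check{Q}_{\kappa_\circ}$. The irrational case is immediate since both lattices reduce to $\{0\}$. In the rational case $\kappa_\circ = (-h^\vee + p/q)\kappa_b$ with $(q, r) = 1$, the additional simple reflection $s_{\check{\theta}_s} t^{-q\check{\theta}_s}$ composed with $s_{\check{\theta}_s}$ produces the translation $t^{-q\check{\theta}_s}$, whose $W_f$-orbit generates the span of $q$ times short coroots; a direct computation using $\kappa_\circ - \kappa_c = (p/q)\kappa_b$ and the normalization $\kappa_b(\check{\alpha}_s, \check{\alpha}_s) = 2$ shows this lattice coincides with $\{\check{\lambda} \in \check{Q} : (p/q)\kappa_b(\check{\lambda}) \in Q\} = \check{Q}_{\kappa_\circ}$. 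The case $(q, r) = r$ is entirely parallel, with short coroots replaced by long ones.

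For the second identification \eqref{e:coxeq2}, I would apply \eqref{e:subafw} on both sides of Langlands duality and chain the resulting isomorphisms with the canonical lattice identification \eqref{e:coxeq3}:
\[
W_{\fg, \kappa_\circ} \simeq W_f \ltimes \check{Q}_{\kappa_\circ} \simeq W_f \ltimes Q_{\check{\kappa}_\circ} \simeq W_{\check{\fg}, \check{\kappa}_\circ},
\]
using that $\fg$ and $\check{\fg}$ share the finite Weyl group $W_f$ and that \eqref{e:coxeq3} is $W_f$-equivariant (being induced by the $G$-invariant form $\kappa_\circ - \kappa_{\fg,c}$). This immediately yields an isomorphism of underlying groups compatible with the semidirect product structure.

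The content to verify is that this isomorphism is one of Coxeter systems, i.e.\ matches distinguished generators. The finite simple reflections correspond identically; it remains to check that the additional affine simple reflection $s_{0, \kappa_\circ}$ maps to $s_{0, \check{\kappa}_\circ}$. Here I would appeal to Lemma \ref{l:simprefs} on both sides: writing $\kappa_\circ = (-h^\vee_\fg + p/q)\kappa_b$ and expressing $\check{\kappa}_\circ$ in its dual normalization, one tracks how the dichotomy $(q,r) = 1$ versus $(q,r) = r$ on the $\fg$-side interchanges with its counterpart on the $\check{\fg}$-side, reflecting that Langlands duality exchanges short and long coroots. The explicit formula \eqref{e:cases} then identifies the distinguished elements on each side as corresponding under \eqref{e:coxeq3}.

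The main obstacle will be this last verification. One must carefully reconcile the normalizations of $\kappa_b$ and $\check{\kappa}_b$, together with the behavior of \eqref{e:coxeq3} restricted to the specific short or long coroot generating the affine simple reflection, to confirm that $s_{\check{\theta}_s} t^{-q\check{\theta}_s}$ (or its long-coroot analogue) on one side corresponds to the analogous element on the other. Once this matching is established for the single affine generator, the remaining Coxeter-system data is inherited automatically from the already-known $W_f$-structure.
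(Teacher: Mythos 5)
Your proposal follows the same two-step strategy as the paper: deduce \eqref{e:subafw} from the arithmetic underlying Lemma \ref{l:simprefs}, then obtain \eqref{e:coxeq2} by chaining \eqref{e:subafw} through \eqref{e:coxeq3} and matching the single affine simple reflection via \eqref{e:cases} on both sides. The route you take to \eqref{e:subafw}, however, is technically different. You compute the translation lattice of the group generated by $W_f$ and $s_{0,\kappa_\circ}$, i.e.\ the $W_f$-submodule of $\check{Q}$ generated by $q\check{\theta}_s$ (resp.\ $(q/r)\check{\theta}_l$), and must then identify it with $\check{Q}_{\kappa_\circ}$. This does work, but the two cases are not ``entirely parallel'' as you claim: short coroots generate all of $\check{Q}$, so in the $(q,r)=1$ case you get $q\check{Q}=\check{Q}_{\kappa_\circ}$ directly, whereas long coroots generate only a proper sublattice $L_l\subsetneq\check{Q}$ in non-simply-laced types, and the identity $(q/r)L_l=\check{Q}_{\kappa_\circ}$ needs a separate (if similar) computation tracking the index of $L_l$. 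The paper avoids this bookkeeping by characterizing which affine coroots $\halpha_n$ are integral and then arguing simple-coroot-by-simple-coroot via the reflections attached to $(\halpha_i)_{n_i}$, proving both inclusions without appealing to the generation of $W_{\fg,\kappa_\circ}$ by its Coxeter generators. For \eqref{e:coxeq2} your outline matches the paper's; the verification you flag as ``the main obstacle'' is exactly what the paper does, namely computing $\check{\kappa}_\circ = (-h^\vee_{\check{\fg}} + q/(pr))\,\check{\kappa}_{\check{\fg},b}$, applying the map \eqref{e:coxla} to the formula \eqref{e:cases} for $s_{0,\kappa_\circ}$, and comparing with \eqref{e:cases} for $\check{\fg}$; note also that the paper first reduces to $\kappa_\circ$ positive via \eqref{e:coxeq1}.
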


\begin{re}
As $G$ and $\check{G}$ in general have different 
affine Weyl groups, there is, perhaps, something surprising about 
Theorem \ref{t:intw}. 
\end{re}

\begin{proof}[Proof of Theorem \ref{t:intw}] We begin with \eqref{e:subafw}. Recall from the proof of 
Lemma \ref{l:simprefs} that for a finite coroot $\halpha$ and integer 
$n$, the affine coroot $\halpha_n$, belongs to $W_{\fg, \kappa_\circ}$ if and 
only if
	\[ \langle \halpha_n, 0 \rangle = n\frac{\kappa_\circ( \halpha, 
		\halpha)}{2}.\]
	is an integer. This integrality condition may be rewritten as
	\[ n \kappa_\circ(\halpha) \in \mathbb{Z} \alpha, \]
	which in turn is equivalent to  $n \halpha \in \check{Q}_{\kappa_\circ}.$ 
    As the affine 
	reflection in $W$ corresponding to $\halpha_n$ 
	is explicitly given by $
	s_{\halpha} t^{n \halpha},$
	it follows that we have an inclusion 
	\begin{equation} \label{e:coxinc} W_{\fg, \kappa_\circ} \subset W_f \ltimes 
	\check{Q}_{\kappa_\circ},
	\end{equation}
	and that the left-hand side includes the translations $t^{n \halpha},$ for 
	$n \halpha$ as above. To see that \eqref{e:coxinc} is an equality, it 
	suffices to see that $\check{Q}_{\kappa_\circ}$ lies in the left-hand side. 
	But if we write an element $\check{\lambda}$ of $\check{Q}$ as a linear 
	combination
	\[ \check{\lambda} = \sum_{i \in \mathscr{I}} n_i \halpha_i, \quad \quad 
	\text{for } 
	i \in \mathscr{I},
	\]
	we have that $\kappa_\circ(\lambda)$ lies in $Q$ if and only if 
	$\kappa_\circ( n_i \halpha_i)$ lies in $Q$, for all $i \in \mathscr{I}$. 
	Considering 
	the affine coroots $(\halpha_i)_{n_i}$ for $i \in \mathscr{I}$ and 
	composing the
	translational parts of their reflections yields the desired equality.

	Let us use the equality \eqref{e:subafw} to prove \eqref{e:coxeq2}. Namely, 
	via \eqref{e:coxeq1}, we may assume that $\kappa_\circ$ is positive. Under 
	this assumption, we will show the composite identification 
	\[ W_{\fg, \kappa_\circ} \overset{\eqref{e:subafw}}{\simeq} W_f \ltimes 
	\check{Q}_{\kappa_\circ} \overset{\eqref{e:coxeq3}}{\simeq} 
	W_f \ltimes Q_{\check{\kappa}_\circ} \overset{\eqref{e:subafw}}{\simeq}  
	W_{\check{\fg}, 
		\check{\kappa}_\circ}
	\]
	is an isomorphism of Coxeter systems. That is, we claim the sets of simple 
	reflections are exchanged under the identification 
	\begin{equation} \label{e:coxla} (\kappa_{\circ} - \kappa_{\fg, c}): W_f 
	\ltimes 
	\check{Q}_{\kappa_\circ} {\simeq} \hspace{1mm}
	W_f \ltimes Q_{\check{\kappa}_\circ}.
	\end{equation}
	If $\kappa_\circ$ is irrational, this is clear, as both sides are $W_f$. 
	Otherwise, let us write the level as
	\begin{equation} \label{e:frac} \kappa_\circ = (-h^\vee + 
	\frac{p}{q}\hspace{1mm})\hspace{1mm}\kappa_b, 
	\end{equation}
where $p, q$ are positive coprime integers. Recall the affine simple reflection 
$s_{0, \kappa_{\circ}}$ from Lemma \ref{l:simprefs}. Applying \eqref{e:coxla} 
to it, we obtain, 
	writing $\theta_s$ for the short dominant root and $\theta_l$ for the long 
	dominant root, 
	\begin{equation} \label{e:cases2} (\kappa_\circ - \kappa_{\check{\fg}, 
		c})\hspace{1mm} s_{0, \kappa_\circ} = \begin{cases} 
	s_{\theta_l} t^{-p 
		\theta_l }, & \text{if } (q, r) = 1 \\[2mm] s_{\theta_s} 
	t^{-p 
		\theta_s }, & \text{if } (q, r) = r \end{cases},
	\end{equation}
	To finish, recall that $\check{\kappa}_\circ$ is 
	given by 
	\[ \check{\kappa}_\circ = (-h^\vee_{\check{\fg}} + \frac{q}{pr} 
	\hspace{1mm} ) \hspace{1mm} \check{\kappa}_{\check{\fg},b}, 
	\]
	where $h^\vee_{\check{\fg}}$ and $\check{\kappa}_{\check{\fg},b}$ are the 
	dual Coxeter number and basic level for $\check{\fg}$, respectively. 
	Comparing the analog of \eqref{e:cases} for $\check{\fg}$ to 
	\eqref{e:cases2} shows the intertwining of the affine simple reflections by 
	\eqref{e:coxla}, as desired. \end{proof}

\subsubsection{}
We may apply these as follows. Suppose $\kappa$ and $\check{\kappa}$ are dual 
levels, and $\kappa'$ is an integral level for $G_s$. Suppose we are given 
a $y \in 
\widetilde{W}$ of minimal length in $W_{\fg, -\kappa + \kappa'}y$, and 
a  simple Verma 
module 
\[M_\mu \in \widehat{\fg}_{-\kappa + \kappa'}\msf{-mod}^{I_s, y^{-1} \cdot 
0}. 
\]
Suppose we are further given a simple Verma module $M_{\check{\nu}}$ in 
$\widehat{\check{\fg}}_{\check{\kappa}}\msf{-mod}^{\check{I}_s}$, such that the 
stabilizers of $\mu$ and $\check{\nu}$ are identified via
\begin{equation}
\label{e:stabs} y^{-1}W_{\fg, -\kappa + \kappa'}y \simeq W_{\fg, -\kappa + 
\kappa'} \overset{\eqref{e:coxeq1}}{\simeq} W_{\fg, \kappa} 
\overset{\eqref{e:coxeq2}}{\simeq} W_{\check{\fg}, 
\check{\kappa}}. 
\end{equation}
\begin{cor} In the above situation, there is a $t$-exact equivalence 
	\[ \widehat{\fg}_{-\kappa + \kappa'}\msf{-mod}^{I_s, y^{-1} \cdot 
		0}_\mu \simeq 
		\widehat{\check{\fg}}_{\check{\kappa}}\msf{-mod}^{\check{I}_s}_{\check{\nu}}
	\]
	\label{c:theend}
\end{cor}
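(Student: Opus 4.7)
The plan is to invoke Fiebig's theorem (Theorem \ref{t:fiebig}) on both sides and match the combinatorial data via Theorem \ref{t:intw}. As a preliminary step, to legitimately apply Theorem \ref{t:fiebig}, which is stated for symmetrizable Kac--Moody algebras, I pass from affine Lie algebras to their Kac--Moody counterparts as in the remark following that theorem; since both $-\kappa + \kappa'$ and $\check{\kappa}$ are noncritical, each block under consideration sits as a Serre subcategory of the corresponding block for the Kac--Moody algebra once the Segal--Sugawara operator is used to extend the action. This identification evidently preserves highest weight structure and is compatible with the natural $t$-structure.

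Next, I compute the Fiebig data for each block. For the left-hand block, the integral Weyl group attached to a weight $\theta \in y^{-1} \cdot 0 + \Lambda_G$ is $y^{-1} W_{\fg, -\kappa+\kappa'} y$, and as already observed in the proof of Lemma \ref{l:dcs}, conjugation by $y^{-1}$ furnishes a canonical isomorphism of Coxeter systems from $W_{\fg, -\kappa+\kappa'}$ onto this subgroup. Thus the Fiebig datum on the left is the Coxeter system $W_{\fg, -\kappa+\kappa'}$ together with the stabilizer of $\mu$, which after transporting back is a subgroup of $W_{\fg, -\kappa+\kappa'}$. For the right-hand block the Fiebig datum is $(W_{\check{\fg}, \check{\kappa}}, \operatorname{Stab}_{\check{\nu}})$.

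The crux is then to check that the chain of isomorphisms in \eqref{e:stabs} is an isomorphism of Coxeter systems, and that under it the two stabilizers are identified. The first map (conjugation by $y^{-1}$) preserves simple reflections by the positivity property \eqref{e:posy} of the minimal-length representative $y$. The middle identification \eqref{e:coxeq1} is an isomorphism of Coxeter systems by Lemma \ref{l:simprefs}. The last identification \eqref{e:coxeq2} is precisely the content of the Langlands-dual Coxeter isomorphism established in Theorem \ref{t:intw}. The hypothesis of the corollary matches stabilizers along this chain, so Theorem \ref{t:fiebig} produces an equivalence of abelian highest weight categories.

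Finally, the abelian equivalence lifts to a $t$-exact equivalence of DG categories: both sides are compactly generated with the standard Verma modules as generators, both have bounded-below subcategories that identify with the bounded-below derived categories of their hearts (as discussed in the paragraph preceding Section \ref{sss:dotaction}), and Fiebig's equivalence preserves standards and costandards, hence $\on{Ext}$-groups between them. The main obstacle is purely bookkeeping: keeping track of which level and which twist the integral Weyl group is being computed against as we move through the chain \eqref{e:stabs}, and verifying that at each stage the identification really is one of Coxeter systems rather than merely of abstract groups. Theorem \ref{t:intw} is the key nontrivial input that makes this possible.
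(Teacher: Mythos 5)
Your proposal is correct and follows essentially the same approach as the paper's proof: invoke Fiebig's Theorem \ref{t:fiebig} on both sides, match the Coxeter-system data via the chain \eqref{e:stabs} (whose crucial link is Theorem \ref{t:intw}), and lift the abelian equivalence to a $t$-exact DG equivalence via compact generation and the identification $\scc^+ \simeq D^+(\scc^\heartsuit)$. You make explicit the Segal--Sugawara passage from the affine Lie algebra to the Kac--Moody algebra needed to apply Fiebig, which the paper leaves implicit in the remark following Theorem \ref{t:fiebig}; conversely, the paper is slightly more precise on the DG bookkeeping, formulating it as the statement that each side is the ind-completion of $D^b$ of finite-length objects in the heart, with compactness of the finite-length objects following from the presence of a simple Verma module in the block.
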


\begin{proof} For either category, which we temporarily denote by $\scc$, and 
by $\scc^{\heartsuit, c}$ the finite length objects in its heart, the canonical 
map 
	\[ D^b( \scc^{\heartsuit, c}) \rightarrow \scc
	\]
realizes the latter as the ind-completion of the former. To see this, note that 
since the blocks contain a simple Verma module, $\scc^{\heartsuit, c}$ indeed 
consists of compact objects, and the fully faithfulness may be checked from 
Verma to dual Verma modules. Either $\scc^{\heartsuit, c}$ is a block of 
Category $\mathscr{O}$ for the corresponding affine algebra, whence 
we are done by the assumptions on $\mu$ and $\check{\nu}$, the identification 
of Coxeter systems
\eqref{e:stabs}, and 
Theorem \ref{t:fiebig}. 
\end{proof}

\subsection{The tamely ramified fundamental local equivalence}
Recall we have assumed that $G$ is simple of adjoint type and that $\kappa$ is negative. 
\begin{theo}\label{t:fle-tame} 

For $\kappa$ good, there is a $t$-exact equivalence
	\begin{equation} \label{e:tbs} 
	\Whit_{\kappa}^{\aff} 
	\overset{Sect. \ref{sss:indexing}}{\simeq} 
	D_{-\kappa}(I \backslash \fL G / 
	\fL N^-, \psi) \simeq 
	\cgk\mod^{\check{I}} 
	\end{equation}
	\end{theo}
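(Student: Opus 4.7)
The strategy is to decompose both sides into blocks and match them one-by-one, using the Coxeter-theoretic comparison of integral Weyl groups as the combinatorial bridge.

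First, on the Whittaker side, applying Theorem \ref{t:whit-blocks} with $\lambda=0$ gives a decomposition
\[
\Whit_\kappa^{\aff} \simeq \bigoplus_{y \in W_{\fg,-\kappa} \backslash \widetilde{W}/W_f} \Whit_{\kappa,y}^{\aff},
\]
where the indexing uses minimal-length double coset representatives $y$. Since $\kappa$ is good (and hence so is $-\kappa$ by \eqref{e:coxeq1}), Corollary \ref{c:wtokm} identifies each summand $\Whit_{\kappa,y}^{\aff}$ $t$-exactly with a block $\widehat{\fg}_{-\kappa+\kappa'_y}\mod^{I_s,\,y^{-1}\cdot 0}_{\mu_y}$ for an appropriate integral level $\kappa'_y$ and antidominant weight $\mu_y$; by Remark \ref{r:sf-2}, the integral Weyl group of this block is canonically $W_{\fg,\kappa}$, and the stabilizer of $\mu_y$ is $W_{\fg,\kappa}\cap yW_fy^{-1}$.

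Next, on the Kac--Moody side for $\check{G}$, a standard block decomposition of $\cgk\mod^{\check{I}}$ indexes the blocks by $W_{\check\fg,\check\kappa}$-orbits on the set of antidominant highest weights in $\check\ft^*$ integrating to $\check{I}$, with the corresponding integral Weyl group being $W_{\check\fg,\check\kappa}$. The key bridge is Theorem \ref{t:intw}, which gives a canonical isomorphism of Coxeter systems
\[
W_{\fg,\kappa} \simeq W_{\check\fg,\check\kappa}
\]
fixing $W_f$ pointwise. Thus the finite parabolic subgroup $W_{\fg,\kappa}\cap yW_fy^{-1}$ of $W_{\fg,\kappa}$ transports to a finite parabolic subgroup of $W_{\check\fg,\check\kappa}$; by the classification of finite parabolic subgroups via alcove faces (as in the proof of Proposition \ref{p:adapt}, applied now on the Langlands dual side), one produces an antidominant $\check\nu_y \in \Lambda_{\check{G}}$ whose stabilizer under the dot action at level $\check\kappa$ is exactly this subgroup. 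Corollary \ref{c:theend} then furnishes a $t$-exact equivalence
\[
\widehat{\fg}_{-\kappa+\kappa'_y}\mod^{I_s,\,y^{-1}\cdot 0}_{\mu_y} \simeq \widehat{\check\fg}_{\check\kappa}\mod^{\check I_s}_{\check\nu_y}.
\]

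Composing, one gets a $t$-exact fully faithful embedding $\Whit_\kappa^{\aff} \hookrightarrow \cgk\mod^{\check I}$ realized as a direct sum of block equivalences. To conclude, one must verify that as $y$ ranges over $W_{\fg,-\kappa}\backslash\widetilde{W}/W_f$, the weights $\check\nu_y$ range over a set of representatives for all blocks of $\cgk\mod^{\check I}$. This is essentially the surjectivity step: every antidominant $\check\nu \in \Lambda_{\check G}$ has a stabilizer in $W_{\check\fg,\check\kappa}$ which is a finite parabolic, hence under the inverse of Theorem \ref{t:intw} corresponds to a subgroup of the form $W_{\fg,\kappa}\cap yW_fy^{-1}$ for a unique minimal-length double coset $y$. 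Passing from $G_s$ to $G$ adjoint accounts for the distinction between $\check I_s$-integrable and $\check I$-integrable categories, which is a minor bookkeeping step since $G$ adjoint makes $\check G = \check G_s$ simply connected.

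The main obstacle I anticipate is the matching of indexing sets in the last paragraph: one must show that every finite parabolic subgroup of $W_{\check\fg,\check\kappa}$ that arises as the stabilizer of an integral antidominant weight (for $\check I$) is of the form $W_{\fg,\kappa}\cap yW_fy^{-1}$ under the isomorphism of Theorem \ref{t:intw}, for a unique such $y$. This is the counterpart, on the dual side, of the existence assertion in Proposition \ref{p:adapt}, and is precisely where the goodness hypothesis on $\kappa$ (equivalently on $\check\kappa$, via \eqref{e:coxeq2}) intervenes to ensure that the relevant alcove-vertex weight-lattice condition holds symmetrically on both sides of Langlands duality.
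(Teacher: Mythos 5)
Your overall strategy — block-by-block matching driven by Corollary \ref{c:wtokm}, Theorem \ref{t:intw}, and Corollary \ref{c:theend} — is the same as the paper's, but the final step has a real gap that the paper resolves differently. You need to produce a \emph{bijection} between the index set $W_{\fg,-\kappa}\backslash\widetilde{W}/W_f$ on the Whittaker side and the set of $W_{\check\fg,\check\kappa}$-dot-orbits on $\check\Lambda_G$ indexing blocks of $\cgk\mod^{\check I}$, compatible with the combinatorial data $(W_{\lambda}, W_\lambda^\circ)$ fed into Fiebig's theorem. Your proposed argument — transport the stabilizer $W_{\fg,\kappa}\cap yW_fy^{-1}$ across Theorem \ref{t:intw}, then invoke the alcove-face argument of Proposition \ref{p:adapt} on the dual side to ``produce'' an antidominant $\check\nu_y$ — does not give such a bijection, because (i) Proposition \ref{p:adapt} only produces \emph{some} weight with the prescribed stabilizer and there is no canonical choice; (ii) distinct double cosets $y$ can yield the same parabolic $W_{\fg,\kappa}\cap yW_fy^{-1}$ (e.g.\ the trivial group for many $y$), so your asserted ``unique $y$'' in the surjectivity step is false; and (iii) matching stabilizers does not match orbits, so you cannot conclude each dual block is hit exactly once.

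The paper closes this gap with a different mechanism: it constructs an explicit $\widetilde W$-equivariant bijection $\widetilde W / W_f \simeq \check\Lambda_G$ by acting on $-\check\rho$ (equation \eqref{e:sgn}), observes that restricting this action along $W_{\check\fg,\check\kappa}\simeq W_{\fg,-\kappa}\hookrightarrow\widetilde W$ (via \eqref{e:coxeq1} and \eqref{e:coxeq2}) gives precisely the standard dot action of $W_{\check\fg,\check\kappa}$ on $\check\Lambda_G$, and proves a Bruhat-order compatibility \eqref{e:bru} showing that the minimal double coset representative $y$ corresponds to the antidominant weight in its orbit. This identifies the two indexing sets, matches orbit to orbit canonically, and automatically matches stabilizers (yielding \eqref{e:dada} $=$ \eqref{e:tada}), at which point Corollary \ref{c:theend} applies. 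Without some version of this explicit intertwining bijection, your ``matching of indexing sets'' remains an acknowledged obstacle rather than a completed step.
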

\begin{proof} Recall 
	our identification from \ref{sss:iwd} of the 
	isomorphism classes of simple objects in the left-hand side of 
	\eqref{e:tbs} with the coset 
	space
	\[ \widetilde{W} / W_f. \]
	We showed in Lemma \ref{l:dcs} that each orbit of $W_{\fg, -\kappa}$ on the 
	latter contains a minimal 
	element $y$ with respect to the Bruhat order. In Corollary \ref{c:wtokm} we 
	showed
	the corresponding block $\Whit_{\kappa, y}^{\aff}$ of the left-hand side 
	of \eqref{e:tbs} identifies
	with a block of twisted $I_s$-integrable modules for 
	$\widehat{\fg}_{-\kappa + 
	\kappa_\circ}$, for an integral level $\kappa'$. 
	As discussed in the proof of Corollary \ref{c:wtokm}, its integral Weyl 
	group 
	identifies as a Coxeter system with 
	$W_{\fg, -\kappa}$, and with this identification the stabilizer of the 
	highest 
	weight of a simple Verma module is given by 
	\begin{equation} \label{e:dada} W_{\fg, -\kappa} \cap \hspace{1mm} y W_f 
	y^{-1}.\end{equation}

	On the other hand, if we denote by $\check{\Lambda}_G$ the cocharacter 
	lattice of 
	$T$, 
	i.e. the character lattice of $\check{T}$, recall that $\widetilde{W}$ is 
	explicitly 
	\[ \widetilde{W} \simeq W_f \ltimes \check{\Lambda}_G,
	\]
	Consider its action on $\check{\Lambda}_G$, where $W_f$ acts by the dot 
	action 
	and an element $\check{\lambda}$ in $\check{\Lambda}_G$ acts as translation 
	by 
	{\em minus} $\check{\lambda}$. Acting on $-\check{\rho}$ yields a 
	$\widetilde{W}$-equivariant identification 
	\begin{equation} \label{e:sgn} \widetilde{W}/W_f \simeq \check{\Lambda}_G.
	\end{equation}

	With this, recalling that $\kappa$ is negative, the restriction of 
	\eqref{e:sgn} along the composition
	\[
	W_{\check{\fg}, \check{\kappa}} \overset{\eqref{e:coxeq1}}{\simeq} 
	W_{\check{\fg}, -\check{\kappa}} \overset{\eqref{e:coxeq2}}{\simeq} W_{\fg, 
		-\kappa} \hookrightarrow \widetilde{W} 
	\]
	yields the standard dot action of $W_{\check{\fg}, \check{\kappa}}$ on 
	$\check{\Lambda}_G$. Moreover, under the equivalence \eqref{e:sgn}, for any 
	reflection $s_\alpha$ in $W_{\check{\fg}, \check{\kappa}}$ and element $x$ 
	of 
	$\widetilde{W}$, one has that
	\begin{equation} \label{e:bru} xW_f \hspace{1mm}\leqslant 
	\hspace{1mm}s_\alpha 
	x W_f \quad\quad \text{ if 
		and 
		only if } \quad\quad  x 
	\cdot-\check{\rho}\hspace{1mm} 
	\leqslant\hspace{1mm} s_\alpha x \cdot - \check{\rho}, \end{equation}
	where the latter $\leqslant$ denotes the standard partial order on 
	$\check{\Lambda}_G$. To see this note that, if we write $\Phi^+$ for the 
	positive real coroots of $\check{\fg}$ and $\Phi_f$ for the finite coroots 
	of 
	$\check{\fg}$, both sides of \eqref{e:bru} are equivalent to 
	\[ x^{-1}(\alpha) \in \Phi^+ \cup \Phi_f.
	\]

	Using \eqref{e:bru}, we may describe the block decomposition of 
	\[ \cgk\mod^{\check{I}}
	\]
	as follows. In its usual formulation, due to Deodhar--Gabber--Kac 
	\cite{dgk}, blocks correspond to $W_{\check{\fg}, \check{\kappa}}$ dot 
	orbits on $\check{\Lambda}_G$, and each contains a unique simple Verma 
	module. Under the identification of its highest weights with 
	$\widetilde{W}/W_f$ via \eqref{e:sgn}, each orbit of $W_{\check{\fg}, 
		\check{\kappa}}$ contains a minimal element $y$ with respect to the 
		Bruhat 
	order. By \eqref{e:bru}, the corresponding Verma module is antidominant, 
	i.e. simple, and has stabilizer 
	\begin{equation} \label{e:tada} W_{\check{\fg}, \check{\kappa}} \cap 
	\hspace{1mm} y W_f 
	y^{-1}.
	\end{equation}
	Comparing Equations \eqref{e:dada} and \eqref{e:tada}, we are done by 
	Corollary \ref{c:theend}. 
\end{proof}

\subsubsection{}
To conclude, we record some properties of the above equivalence, which will be 
used in Proposition \ref{p:isog} below.

\begin{pro}\label{p:whogoeswhere} For $\kappa$ good, the obtained 
	equivalence
	\[ \mathsf{F}: D_\kappa( \fL N, \psi \backslash \fL G / 
	I) \simeq 
	\cgk\mod^{\check{I}}
	\]
	interchanges, for any $\check{\lambda}$ in $\check{\Lambda}_G$, the 
	(co)standard and simple objects, i.e.,
	\[
	\mathsf{F}( \hspace{.5mm}j_{\check{\lambda}, !}^\psi\hspace{.1mm} )\simeq 
	M_{\check{\lambda}}, \quad 
	\quad 
	\mathsf{F}(\hspace{.5mm} j_{\check{\lambda}, !*}^\psi \hspace{.1mm})\simeq 
	L_{\check{\lambda}}, \quad 
	\quad \text{and } \quad \quad 
	\mathsf{F}(\hspace{.5mm}j_{\check{\lambda}, *}^\psi\hspace{.1mm}) \simeq 
	A_{\check{\lambda}}. 
	\]
\end{pro}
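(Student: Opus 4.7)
The plan is to track the standard, simple, and costandard objects through each link of the chain of equivalences used to build $\mathsf{F}$ in the proof of Theorem \ref{t:fle-tame}. The overarching observation is that every constituent functor is $t$-exact and an equivalence of highest weight categories, so it automatically carries standards to standards, simples to simples, and costandards to costandards. The real content is thus to verify the indexing.

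Concretely, fix $\check{\lambda} \in \check{\Lambda}_G$ and let $x \in \widetilde{W}^f$ be its preimage under the identification \eqref{e:sgn}. Write $x = wy$, where $y$ is the minimal-length representative of its $W_{\fg,-\kappa}$-orbit in $\widetilde{W}/W_f$ and $w \in W_{\fg,-\kappa}$. The object $j_{x,!}^\psi$ lies in the block $\Whit_{\kappa, y}^{\aff}$, and by the first identity in \eqref{e:moves1} satisfies $j_{x,!}^\psi \simeq j_{w,!} \overset{I}{\star} j_{y,*}^\psi$. Thus Proposition \ref{p:neutr} transports $j_{x,!}^\psi$ to $j_{w,!}^\psi \in \Whit_{y^{-1}\cdot 0,\, e}^{\aff}$. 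The functor $\Psi$ of Theorem \ref{t:whit-km-neut} then sends this to the Verma module $M_{w\cdot \Theta}$ in the appropriate block of $\widehat{\fg}_{-\kappa+\kappa'}\mod^{I_s,\, y^{-1}\cdot 0}$, and finally Fiebig's equivalence (Corollary \ref{c:theend}) carries Verma modules to Verma modules by its highest-weight nature, yielding an object $M_{\check{\mu}} \in \cgk\mod^{\check{I}}$.

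The identification $\check{\mu} = \check{\lambda}$ is effectively the content of the indexing comparison already carried out in the proof of Theorem \ref{t:fle-tame}: the stabilizers \eqref{e:dada} and \eqref{e:tada} agree under the Coxeter isomorphism \eqref{e:coxeq2}, while \eqref{e:bru} ensures that minimal-length representatives of $W_{\fg,-\kappa}$-orbits on $\widetilde{W}/W_f$ correspond under \eqref{e:sgn} to antidominant elements of $\check{\Lambda}_G$ under the dot action of $W_{\check{\fg},\check{\kappa}}$. The argument for $j_{x,*}^\psi$ is identical, using the second identity in \eqref{e:moves1} and the fact that $\Psi(j_{w,*}^\psi) \simeq A_{w\cdot \Theta}$; and the case of $j_{x,!*}^\psi$ then follows because the simple object in each block of either highest weight category is determined as the unique simple subquotient of the standard with nontrivial pairing with the costandard.

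The main point requiring care is verifying that the abstract Coxeter-theoretic normalization of Fiebig's equivalence lines up with the dot-action parameterization of highest weights in $\cgk\mod^{\check{I}}$. This is controlled by the antidominance of the chosen $\Theta$ together with the matching of Bruhat minima with antidominant weights via \eqref{e:bru} and \eqref{e:sgn}; since these ingredients are already invoked in the proof of Theorem \ref{t:fle-tame}, the proposition amounts to making the correspondence of simples established there functorial at the level of standards and costandards.
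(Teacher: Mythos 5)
Your proof is correct and fills in what the paper leaves implicit: Proposition \ref{p:whogoeswhere} is stated without an explicit argument, as a direct bookkeeping consequence of the chain of $t$-exact, block-preserving equivalences assembled in the proof of Theorem \ref{t:fle-tame}. Your tracking of standards and costandards through \eqref{e:moves1}, Proposition \ref{p:neutr}, Theorem \ref{t:whit-km-neut}, and Fiebig's equivalence, together with the identification of indexing via \eqref{e:sgn} and \eqref{e:bru}, is precisely the intended argument.
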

Proposition \ref{p:whogoeswhere} immediately 
implies, via the constructions of the appendix, a similar statement for 
general reductive groups $G$ at negative level. 
 
 \begin{re} \label{r:Soergelmethods}Having obtained the tamely ramified 
 fundamental local equivalence 
 for 
 good levels, let us outline a variant of the proof which may be 
 desirable.
 
 Presently, we relate blocks of $\Whit_\kappa^{\aff}$ and 
 $\cgk\mod^{\check{I}}$ by relating the former to $\gk\mod^{I}$ and applying 
 Fiebig's results. However, it should be possible to adapt the arguments of 
 Bezrukavnikov--Yun on $\mathbb{V}$-functors provided in Section 4 and 5 of 
 \cite{by} 
 to $\Whit_{\kappa}^{\aff}$, and thereby identify each block with a category of 
 (possibly singular) Soergel modules. Comparing this with the similar 
 identification provided by 
 Fiebig, and matching the combinatorics exactly as in the proof of Theorem 
 \ref{t:fle-tame} should yield the desired equivalence. 
 
 This would remove the assumption of 
 goodness on $\kappa$, and such a description of the Whittaker 
 category should be equally applicable in other sheaf-theoretic contexts, e.g. 
 metaplectic Whittaker sheaves over function 
 fields.

 \end{re}

\subsection{Parahoric fundamental local equivalences}\label{ss:parahoric}

\subsubsection{}Recall the canonical bijection between the simple roots of 
$\fg$ and 
$\check{\fg}$, which were indexed by $\mathscr{I}$. In particular, for a 
standard parahoric subgroup 
\[ P \subset \fL G,\]
which  corresponds to a subset $\sJ$ of $\mathscr{I}$, we may 
associate a dual parahoric 
\[\check{P} \subset \fL\cG. \]
\subsubsection{}
Let us obtain 
a parahoric extension of the tamely ramified fundamental local equivalence. In 
particular, we continue to assume that $\kappa$ is negative and $G$ is simple 
of adjoint type. 

\begin{theo}\label{t:galu} For $\kappa$ good, there is an 
	equivalence
	\begin{equation}  D_{\kappa}( \fL N, \psi \backslash 
	\fL G / P) \simeq 
	\cgk\mod^{\check{P}}.
	\end{equation}
\end{theo}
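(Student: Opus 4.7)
The strategy is to deduce the parahoric equivalence from Theorem \ref{t:fle-tame} by realizing both sides as full subcategories (at the abelian level) of the corresponding Iwahori categories and showing that the Iwahori equivalence restricts appropriately. The essential inputs are the $t$-exactness of the Iwahori equivalence and the explicit matching of standards, costandards, and simples established in Proposition \ref{p:whogoeswhere}, both guaranteed here by our assumption that $\kappa$ is negative.

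Concretely, the natural functor
\[
D_\kappa(\fL N, \psi \backslash \fL G / P) \to \Whit_\kappa^{\aff}
\]
is fully faithful on hearts, its essential image consisting of the Whittaker sheaves on $\fL G / I$ that descend along the smooth surjection $\fL G / I \to \fL G / P$ --- equivalently, that are invariant (in the appropriate monodromic sense) under the natural action of $P/I$, whose reductive quotient has Weyl group $W_\sJ$. Combinatorially, using the identification $\widetilde{W}/W_f \simeq \check{\Lambda}_G$ from the proof of Theorem \ref{t:fle-tame}, this essential image is generated by those simples $j_{\check{\lambda}, !*}^\psi$ with $\check{\lambda}$ antidominant for the Levi subgroup of $P$ associated to $\sJ$. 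Symmetrically, the restriction functor $\cgk\mod^{\check{P}} \to \cgk\mod^{\check{I}}$ is fully faithful on hearts, with essential image generated by simples $L_{\check{\lambda}}$ integrable for the Levi of $\check{P}$ --- equivalently, with $\check{\lambda}$ antidominant for that Levi. Since $P$ and $\check{P}$ correspond to the same $\sJ \subset \mathscr{I}$, the two Levis share the common Weyl group $W_\sJ$, so the two antidominance conditions on $\check{\lambda}$ coincide; combined with the matching $j_{\check{\lambda}, !*}^\psi \leftrightarrow L_{\check{\lambda}}$ from Proposition \ref{p:whogoeswhere}, this shows that Theorem \ref{t:fle-tame} restricts to an equivalence of hearts between the two parahoric categories.

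To promote this heart equivalence to a DG equivalence, I would follow the pattern of the proof of Corollary \ref{c:theend}: exhibit both parahoric categories as ind-completions of the bounded derived categories of finite-length objects in their hearts, reducing the derived statement to the heart matching just established. The step I expect to require the most care is precisely this promotion --- verifying that the heart-level identification does not get derailed by higher Exts when passing to the DG level on either side, and that the fully faithful inclusions into the Iwahori categories interact correctly with the relevant parahoric compact generators (parabolic inductions from the Levi on the Kac--Moody side, and $!$- and $*$-averages of Iwahori standards along $\fL G / I \to \fL G / P$ on the Whittaker side). The $t$-exactness of Theorem \ref{t:fle-tame} and its compatibility with highest weight structures are what make this tractable; as emphasized in the introduction, this is precisely the payoff of having built these properties into the Iwahori equivalence from the outset.
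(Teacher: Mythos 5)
Your overall strategy is correct and matches the paper's proof: embed both parahoric categories into the Iwahori categories as full Serre subcategories (on hearts), match essential images using the explicit matching of simples from Proposition \ref{p:whogoeswhere} and the $t$-exactness of the Iwahori equivalence, then promote the heart equivalence to a DG equivalence by the same ind-completion device used in Corollary \ref{c:theend}.

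However, you have mislocated the genuine difficulty, and in doing so passed over the step that actually carries the content. The DG promotion you flag as requiring ``the most care'' is in fact routine given the highest weight and $t$-exactness properties already established; the paper disposes of it with a one-line reference back to Corollary \ref{c:theend}. What is \emph{not} routine is the combinatorial claim you assert in passing: that the Whittaker simples $j_{\check{\lambda},!*}^\psi$ lying in the essential image of the descent functor from $\fL G/P$ are exactly those with $\check{\lambda}$ dominant for $\sJ$. On the Kac--Moody side the analogous characterization of $\check{P}$-integrable simples in terms of $\sJ$-dominance of the highest weight is immediate. On the Whittaker side, though, the natural description of the essential image is in terms of the stratification of $\fL G/P$ by $\fL N^-$-orbits, and if one unwinds this through the identification $\widetilde{W}/W_f \simeq \check{\Lambda}_G$ from the proof of Theorem \ref{t:fle-tame}, one lands on the condition \eqref{e:whithw} on a representative $x \in \widetilde{W}$, which involves the double coset structure in $W_\sJ \backslash \widetilde{W} / W_f$, not a single dominance inequality. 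The paper's proof then shows, by a nontrivial Coxeter-theoretic verification, that \eqref{e:whithw} and the translate \eqref{e:parhw2} of the Kac--Moody dominance condition cut out the same subset of $\widetilde{W}$ (namely, elements of maximal length in double cosets $W_\sJ x W_f$ satisfying $x(\check{\Phi}_f) \cap \check{\Phi}_\sJ = \emptyset$). Your proposal asserts the conclusion of this verification, namely that the two conditions ``coincide'' because both Levis have Weyl group $W_\sJ$, but does not prove it; this is the gap.

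One smaller point: the correct condition is \emph{dominance}, not antidominance, for $\sJ$. The paper's condition \eqref{e:parhw} is $s_j \cdot \check{\lambda} < \check{\lambda}$ for $j \in \sJ$, i.e. $\langle \check{\lambda} + \check{\rho}, \alpha_j\rangle > 0$, and the highest weight set $\check{\Lambda}_{G,P}^+$ introduced just after Theorem \ref{t:galu} is defined by $\langle \alpha_j, \check{\lambda}\rangle \geq 0$ for $j \in \sJ$. Your use of ``antidominant'' on both sides is internally consistent, so it does not compromise the logic, but it should be corrected.
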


\begin{proof}It is enough to produce an equivalence, which we 
	denote provisionally by a dotted line, fitting into commutative diagram
	\begin{equation} \label{e:diag} \begin{gathered}
	\xymatrix{ \hspace{-7mm} D_{-\kappa}(P \backslash \fL G / 
		\fL N^-, 
		\psi)^{\heartsuit, c}  
		\ar@{--}[rr]^\sim \ar[d]_{\pi^{!*}} 
		& &
		\cgk\mod^{\check{P}, \heartsuit, c} \ar[d]^{\on{Oblv}} 
		\\\hspace{-7mm} 
		D_{-\kappa}(I \backslash \fL G / \fL N^-, \psi )^{\heartsuit, 
			c} 
		\ar@{-}[rr]^{\eqref{e:tbs}} && \cgk\mod^{\check{I}, 
			\heartsuit, c}, }\end{gathered}
	\end{equation}
	where we normalize the pull-back $\pi^{!*}$ associated to $\pi: \fL G / I
	\rightarrow \fL 
	G/ P$ to be $t$-exact. Here, as in the proof of Corollary \ref{c:theend}, 
	the superscripts $\heartsuit, c$ denote compact objects in the heart, which 
	in the present situation is equivalent to finite length objects in the 
	heart. Noting 
	that both vertical arrows in \eqref{e:diag} 
	are full 
	embeddings 
	of Serre subcategories,  it is enough 
	to 
	see that the essential images of the simple objects under are intertwined 
	by \eqref{e:tbs}. To see this claim, 
	recall 
	we denoted the subset of simple roots corresponding to the dual parahorics  
	$P$ and $\check{P}$ by $\sJ$. With this, the simple objects on the 
	Whittaker 
	side lying in the essential image are 
	intermediate extensions from the orbits
	\[ Ix\fL N^-, \quad \quad \text{for } x \in \widetilde{W},
	\]
	where $x$ satisfies the three conditions 
	\begin{align} \label{e:whithw}
	s_j x < x, \hspace{2mm}  \forall j \in \sJ, \quad \quad xs_i < x, 
	\hspace{2mm}  
	\forall i 
	\in \mathscr{I}, \quad \quad \text{and } \quad \quad 	
	W_\sJ x s_i < W_\sJ x,  \hspace{2mm}  \forall i \in \mathscr{I}.
	\end{align}
	The simple objects on the Kac--Moody side lying in the essential image have 
	highest weights $\check{\lambda}$ satisfying 
	\begin{equation} \label{e:parhw} s_j \cdot \check{\lambda} < 
	\check{\lambda}, 
	\quad \forall j \in \sJ.
	\end{equation}
	Write $\check{\lambda} = x \cdot -\check{\rho}$, for $x \in \widetilde{W}$ 
	acting as in \eqref{e:sgn}. We may assume $x$ is of maximal length in its 
	left 
	$W_f$ coset, in which case \eqref{e:parhw} is equivalent to $x$ satisfying 
	the 
	three conditions 
	\begin{equation}\label{e:parhw2}
	s_j x < x, \hspace{2mm}  \forall j \in \sJ, \quad \quad xs_i < x, 
	\hspace{2mm}  
	\forall i 
	\in \mathscr{I}, \quad \quad \text{and } \quad \quad 	
	s_j x W_f < x W_f,  \hspace{2mm}  \forall j \in \mathscr{J}.
	\end{equation}
	We finish by noting that \eqref{e:whithw} and \eqref{e:parhw2} describe the 
	same subset of $\widetilde{W}$, namely the unique elements of maximal 
	length in 
	double cosets $W_\sJ x W_f$, such that the double coset satisfies
	\[  x( \check{\Phi}_f) \cap \check{\Phi}_{\sJ} = \emptyset,
	\]
	where $\check{\Phi}_f$ denotes the finite coroots and $\check{\Phi}_{\sJ}$ 
	the 
	coroots of the Levi associated to $\mathscr{J}$. \end{proof}

\subsubsection{}

We next observe that the analog of Proposition \ref{p:whogoeswhere} again holds 
in the parahoric setting. As notation for the highest weights of 
$\cgk\mod^{\check{P}}$, let us write
\[ \check{\Lambda}_{G, P}^+ = \{ \check{\lambda} \in 
\check{\Lambda}_G:\hspace{2mm} \langle 
\alpha_j, \check{\lambda} \rangle \in \mathbb{Z}^{\geqslant 0}, \text{  for 
	all 
} j \in 
\sJ \}. 
\]

\begin{pro} For $\kappa$ good, the obtained equivalence 
	\[ \mathsf{F}: D_\kappa( \fL N, \psi \backslash \fL G / 
	P) \simeq 
	\cgk\mod^{\check{P}} \]
	interchanges, for $\check{\lambda}$ in $\check{\Lambda}^+_{G, P}$, the 
	corresponding (co)standard and simple objects.
	
\end{pro}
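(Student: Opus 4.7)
The plan is to argue that $\mathsf{F}$ is an equivalence of highest weight categories indexed by $\check\Lambda^+_{G,P}$, reducing the desired interchange to the Iwahori case of Proposition \ref{p:whogoeswhere}.

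First, I would match the simples. By construction of $\mathsf{F}$ in the diagram \eqref{e:diag}, the embeddings $\pi^{!*}$ on the Whittaker side and $\on{Oblv}$ on the Kac--Moody side identify the parahoric simples $j^\psi_{\check\lambda,!*}$ and $L_{\check\lambda}$ (for $\check\lambda \in \check\Lambda^+_{G,P}$) with the corresponding Iwahori simples of the same index. Since Proposition \ref{p:whogoeswhere} matches these Iwahori-level simples to each other, we immediately obtain $\mathsf{F}(j^\psi_{\check\lambda,!*}) \simeq L_{\check\lambda}$ for all $\check\lambda \in \check\Lambda^+_{G,P}$.

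Next, I would match standards and costandards via an abstract highest-weight argument. Both parahoric categories are highest weight categories on $\check\Lambda^+_{G,P}$ with respect to the usual dominance order on $\check\Lambda_G$. Via the combinatorial correspondence from the proof of Theorem \ref{t:galu} combined with \eqref{e:bru}, $\mathsf{F}$ preserves this partial order on simple indices. A $t$-exact equivalence of highest weight categories that matches simples and preserves the partial order necessarily matches the truncated Serre subcategories generated by simples with index $\leqslant \check\lambda$, and within each such subcategory the standard (resp. costandard) is uniquely characterized as the projective cover (resp. injective envelope) of $L_{\check\lambda}$. Hence $\mathsf{F}(j^\psi_{\check\lambda,!}) \simeq M^P_{\check\lambda}$ and $\mathsf{F}(j^\psi_{\check\lambda,*}) \simeq A^P_{\check\lambda}$, where $M^P_{\check\lambda}$ and $A^P_{\check\lambda}$ denote the parabolic Verma module and its contragredient dual.

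The main obstacle is the careful verification of the compatibility of the partial orders under the combinatorial bijection between the index sets described by \eqref{e:whithw} and \eqref{e:parhw2}. Concretely, one must identify the closure order on $\fL N$-orbits in $\fL G/P$ with the dominance order on $\check\Lambda^+_{G,P}$ arising from parabolic Category $\mathscr{O}$ for $\cgk$, using \eqref{e:sgn} and the Bruhat structure on $\widetilde W / W_f$. This is a purely combinatorial statement about maximal length representatives in double cosets $W_\sJ x W_f$, but it is the essential geometric input; once it is established, the highest-weight argument above closes the proof with no further calculation.
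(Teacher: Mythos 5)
Your matching of the simple objects follows the paper exactly: both routes read it off the commutative diagram \eqref{e:diag} and the combinatorial analysis in the proof of Theorem \ref{t:galu}.

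For the standards and costandards, however, you take a genuinely different route from the paper, and that route has a substantive gap. You propose to characterize the parahoric standard $j^{\psi}_{\check\lambda,!}$ and its Kac--Moody counterpart as the projective cover of the simple in the truncated Serre subcategory $\mathscr{O}^{\leqslant \check\lambda}$, then argue that a $t$-exact equivalence matching simples and preserving the order must match these truncations. The highest-weight formalism is sound, but you are explicitly deferring the order-compatibility claim --- that the closure order on $\fL N$-orbits in $\fL G/P$ agrees, under \eqref{e:sgn}, with the order on highest weights for parabolic Category $\mathscr{O}$ over $\cgk$. You call this ``the essential geometric input'' and do not supply it. In the Iwahori case, \eqref{e:bru} gives exactly this comparison, but the parahoric version is not established in your argument, and it is not a formal consequence of the Iwahori statement: passing to maximal-length representatives in $W_{\sJ}\backslash\widetilde{W}/W_f$ changes both orders, so a separate check is needed. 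As written, the proof is therefore incomplete.

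The paper avoids this entirely by transferring the Iwahori statement functorially. The vertical arrows in \eqref{e:diag} are full embeddings of Serre subcategories intertwined by the horizontal equivalences, so their left and right adjoints are also intertwined. Applying the left adjoints to Iwahori standards $j^{\psi}_{w,!}$ and Verma modules $M_{\check\lambda}$ produces the parahoric standards and parabolic Verma modules; applying the right adjoints produces the costandards. Proposition \ref{p:whogoeswhere} then finishes the proof. This adjoint-based argument uses no combinatorial comparison of partial orders and no intrinsic characterization of standards, so it is both shorter and more robust. If you want to keep your abstract highest-weight argument, you must actually carry out the Bruhat-order comparison for $W_{\sJ}$-double cosets; otherwise, the adjunction argument is the cleaner path.
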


\begin{proof} For simple objects this is contained in the proof of Theorem 
	\ref{t:galu}. For the remaining claims, note that the standard and 
	costandard 
	objects of the parahoric categories may be obtained from the standard and 
	costandard objects of the Iwahori categories by appling the left and right 
	adjoints of the vertical arrows in \eqref{e:diag}, respectively. Hence the 
	claim follows from Proposition \ref{p:whogoeswhere}.
\end{proof}
Via the reductions in the appendix, a similar statement applies for 
general 
reductive $G$ at negative level. 

\subsubsection{}
We finish with two remarks.

\begin{re} Applying Theorem \ref{t:galu} in the maximal 
case, i.e., for the parahorics given by the arc groups $\fL^+ G$ and $\fL^+ 
\cG$, we obtain the spherical fundamental local equivalence for good 
levels, namely
\[ \Whit_{\kappa}^{\on{sph}} \coloneqq
D_\kappa( \fL N, \psi \backslash \fL G  / \fL ^+ G) \simeq 
\cgk\mod^{\fL^+ \cG}.\]
\end{re}

\begin{re} We would like to record here the expectation that, for dual 
parahorics 
$P$ and $\check{P}$ as above, local quantum Langlands duality exchanges the 
operations of taking $P$ and $\check{P}$ invariants. For the Iwahori and arc 
subgroups, this already appears in the literature \cite{campja}, 
\cite{dennispro}. However, while one has a canonical bijection between the 
affine simple roots 
for $\fg$ and $\check{\fg}$, we do {\em not} expect an interchanging of the 
corresponding invariants of more general parahoric subgroups. Indeed, already 
the analog of Theorem \ref{t:galu} will typically fail, unless $\check{\kappa}$ 
is 
integral and $\fg$ is simply laced. 
\end{re}

\appendix

\section{Proof in the general case} \label{a:reduc}

In this appendix, we spell out how to deduce the general case of the 
conjectures from the case of $G$ of adjoint type and $\kappa$ a negative 
level. While we write the reductions in the tamely ramified case, they apply 
{\em mutatis mutandis} in the parahoric cases as well.

\subsection{Good levels for general $G$} Recall the notion of a good level for 
a simple group, cf. Section \ref{sss:good-defin} and Proposition \ref{p:adapt}. 
Let us a say a level $\kappa$ for a reductive group $G$ is good if it is good 
after restriction to each simple factor of $G$.

The following reductions in 
fact show the general case of the conjectures for reductive $G$ and $\kappa$ 
good reduce to the case of simple $G$ of adjoint type and $\kappa$ a good 
negative 
level. In particular, via Theorems \ref{t:fle-tame} and \ref{t:galu}, we obtain 
the fundamental local equivalences for general $G$ at good levels.

\subsection{Finite isogenies} \label{ss:isog} Suppose we are given a finite 
isogeny of 
pinned connected reductive groups \[\iota: G_1 \rightarrow G_2.\]The morphism 
$i$ 
yields a closed embedding of 
affine flag varieties, and hence 
a fully faithful embedding
\begin{equation}
\label{e:ewhit}D_\kappa( \mathfrak{L}N_1, \psi \backslash \fL G_1 / 
I_1) 
\rightarrow D_\kappa(\mathfrak{L}N_2, \psi \backslash \fL G_2 / I_2).
\end{equation}
Consider the Langlands dual isogeny of connected reductive groups 
\[\check{\iota}: 
\check{G}_2 \rightarrow 
\check{G}_1.\] Associated to $\check{\iota}$ is a fully faithful restriction 
map 
\begin{equation} \label{e:ecato}
\cgk\mod^{\check{I}_1} \rightarrow \cgk\mod^{\check{I}_2}.
\end{equation}
To see the claimed fully faithfulness, one may use the following lemma. 

\begin{lem} Suppose one is a given a fibre sequence of quasi-compact affine 
	group schemes 
	\[ 1 \rightarrow K \rightarrow H \rightarrow Q \rightarrow 1\]
	where $K$ is of finite type, the prounipotent radical $H^u$ is of finite 
	codimension in $H$, and $pt/K$ is homologically 
	contractible, i.e.,   \[H^*(pt/K, \mathbb{Q}) \simeq \mathbb{Q}.\]Then 
	for any $D(Q)$-module $\scc$, the restriction map 
	$\scc^{Q} \rightarrow \scc^H
	$
	is fully faithful. 
	
\end{lem}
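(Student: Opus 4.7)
The plan is to factor the restriction $\scc^Q \to \scc^H$ through $K$-invariants and reduce the fully faithfulness to the cohomological triviality assumption on $pt/K$. Since $\scc$ is a $D(Q)$-module viewed as a $D(H)$-module via pullback along $H \to Q$, the normal subgroup $K \subset H$ acts trivially on $\scc$; this triviality, together with cohomological triviality of $pt/K$, is the essential input.

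First, using the finite codimension of $H^u$ in $H$ and the finite-type hypothesis on $K$, I would invoke Beraldo's formalism so that the forgetful functors from equivariant categories are continuous with continuous right adjoints, and $H$-invariants factor through $K$-invariants to give a canonical equivalence $\scc^H \simeq (\scc^K)^Q$ (since $K$ is normal in $H$, the $D(Q)$-module structure on $\scc^K$ is well-defined). Because $K$ acts trivially on $\scc$, one has a canonical identification
\[
\scc^K \simeq D(pt/K) \otimes \scc,
\]
arising from the formula $\Hom_{D(K)\mod}(\Vect, \Vect \otimes \scc) \simeq D(pt/K) \otimes \scc$, under which the restriction $\scc^Q \to \scc^H$ is induced by the $Q$-equivariant unit $\Vect \to D(pt/K)$ (equivariance holds because conjugation on $K$ fixes the identity element, hence fixes the trivial $D(K)$-module). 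The hypothesis $H^*(pt/K, \mathbb{Q}) \simeq \mathbb{Q}$ says exactly that $\on{End}_{D(pt/K)}(\Vect) \simeq \mathbb{Q}$, so that $\Vect \to D(pt/K)$ is a fully faithful embedding of presentable DG categories, and this property is preserved by tensoring with $\scc$ and passing to $Q$-invariants.

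The main obstacle is making precise the identification $\scc^K \simeq D(pt/K) \otimes \scc$ together with its $D(Q)$-module structure, since $Q$ may act nontrivially on $D(pt/K)$ via the conjugation action of $H$ on its normal subgroup $K$. This is handled by observing that the key unit map $\Vect \to D(pt/K)$ is canonically $Q$-equivariant, so the compatibility needed for the restriction functor passes through regardless of the precise $Q$-action on the $D(pt/K)$ factor; alternatively one may bypass this point entirely by computing $\scc^H$ and $\scc^Q$ via the cobar (simplicial) resolutions associated to $B^\bullet H \to B^\bullet Q$ with fibre $B^\bullet K$, where cohomological triviality of $pt/K$ forces the induced map on totalizations to be fully faithful.
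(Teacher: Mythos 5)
Your argument is correct, but it takes a genuinely different route from the paper's. You factor $\scc^H \simeq (\scc^K)^Q$ through intermediate $K$-invariants, use that $K$ acts trivially on $\scc$ (since the $D(H)$-action is pulled back from $D(Q)$) to identify $\scc^K \simeq D(pt/K) \otimes \scc$, and thereby reduce the whole statement to fully faithfulness of the $Q$-equivariant unit $\Vect \to D(pt/K)$ tensored with $\scc$. The paper instead handles the free case $\scc = D(Q)$ first, where $D(Q)^Q \simeq \Vect$ and $D(Q)^H \simeq D(Q/H) \simeq D(pt/K)$, so that fully faithfulness is literally the contractibility hypothesis; it then bootstraps to arbitrary $\scc$ by resolving $\scc$ over $D(Q)$ and using that both $(-)^Q$ and $(-)^H$ commute with the relevant colimits (they agree with coinvariants in this setting, thanks to the prounipotence hypothesis). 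Both proofs ultimately hinge on the same fully faithful functor $\Vect \to D(pt/K)$. The trade-off is that your main route must address the compatibility of the identification $\scc^K \simeq D(pt/K) \otimes \scc$ with the residual $Q$-action — a point you correctly flag and handle by noting that the unit is conjugation-invariant — whereas the resolution argument sidesteps this entirely by only ever manipulating free modules, for which the equivariance is automatic. Your alternative sketch at the end, computing invariants via (co)simplicial resolutions, is essentially the paper's proof.
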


\begin{proof} For $\scc \simeq D(Q)$, this is exactly the assumption of 
	homological contractibility. The case of general $\scc$ follows from taking 
	its cobar resolution as a $D(Q)$-module and using the commutation of $Q$ 
	and $H$ invariants with colimits. 
\end{proof}

We may apply the lemma to $\check{I}_2 \rightarrow \check{I}_1$, as the kernel 
identifies with the kernel of $\check{\iota}$. Combining these assertions, we 
obtain:

\begin{pro}\label{p:isog} Suppose that one has an equivalence of the form 
	\eqref{e:tfle} for $(G_2, \kappa)$ and $(\check{G}_2, 
	\check{\kappa})$. 
	Assume it exchanges the full subcategory generated under shifts and 
	colimits by the 
	Whittaker 
	sheaves
	\begin{align*} &j_{\check{\lambda}, !}^\psi, \quad  \quad \text{for } 
	\check{\lambda} \in  
	\check{\Lambda}_{G_1} \intertext{
		with the full subcategory generated under shifts and colimits by the 
		Verma modules}  
	&M_{\check{\lambda}}, \quad \quad  \text{for }
	\check{\lambda} \in \check{\Lambda}_{G_1}.\end{align*}
	Then it induces an equivalence of 
	the form \eqref{e:tfle} for $(G_1, \kappa)$ and $(\check{G}_1, 
	\check{\kappa})$, fitting into a commutative diagram
	\[\xymatrix{ \hspace{-7mm} D_\kappa(\mathfrak{L}N, \psi \backslash 
		\fL G_1 / I_1) 
		\ar@{-}[rr]^\sim \ar[d]_{\eqref{e:ewhit}} & &
		\cgk\mod^{\check{I}_1} \ar[d]^{\eqref{e:ecato}} \\\hspace{-7mm} 
		D_\kappa(\mathfrak{L}N, 
		\psi \backslash 
		\fL G_2/ I_2) \ar@{-}[rr]^\sim & & \cgk\mod^{\check{I}_2}.  }
	\]

\end{pro}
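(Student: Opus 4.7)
The plan is to identify the essential images of the two fully faithful embeddings \eqref{e:ewhit} and \eqref{e:ecato} with precisely the two subcategories singled out in the hypothesis, and then to produce the desired equivalence for $(G_1,\kappa)$ by restricting the given equivalence for $(G_2,\kappa)$.

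For \eqref{e:ewhit}, I would analyze the closed embedding of affine flag varieties $\fL G_1/I_1 \hookrightarrow \fL G_2/I_2$ orbit-by-orbit. Since $\iota$ is an isogeny, the two groups share the finite Weyl group $W_f$ and their extended affine Weyl groups differ only through their translation lattices: $\widetilde{W}_j = W_f \ltimes \check{\Lambda}_{G_j}$ with $\check{\Lambda}_{G_1} \subset \check{\Lambda}_{G_2}$. Under the bijection $\widetilde{W}_j/W_f \simeq \check{\Lambda}_{G_j}$ of \eqref{e:sgn}, this matches the lattice inclusion appearing in the statement. Hence the essential image of \eqref{e:ewhit} is the full subcategory generated under colimits and shifts by $\{j_{\check{\lambda},!}^\psi : \check{\lambda} \in \check{\Lambda}_{G_1}\}$.

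For \eqref{e:ecato}, I would apply the lemma preceding the proposition to the isogeny $\check{\iota}|_{\check{I}_2}: \check{I}_2 \to \check{I}_1$: its kernel is a finite central group scheme, and the classifying stack of any finite group in characteristic zero is rationally contractible, so the hypotheses of the lemma are met and the functor is fully faithful. To identify the essential image, I would use that $\cgk\mod^{\check{I}_j}$ is compactly generated by Verma modules whose highest weights range over characters of $\check{I}_j/\mathring{\check{I}}_j \simeq \check{T}_j$, i.e., over $\check{\Lambda}_{G_j}$. The functor \eqref{e:ecato} sends the Verma module $M_{\check{\lambda}}$ with $\check{\lambda} \in \check{\Lambda}_{G_1}$ to the Verma module of the same highest weight, now regarded as a character of $\check{T}_2$ through the isogeny dual to $\iota$; so its essential image is the full subcategory generated by $\{M_{\check{\lambda}} : \check{\lambda} \in \check{\Lambda}_{G_1}\}$.

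With these identifications established, the hypothesis of the proposition is exactly the statement that the given equivalence for $(G_2,\kappa)$ carries the essential image of \eqref{e:ewhit} isomorphically onto the essential image of \eqref{e:ecato}. Restricting then yields the desired equivalence for $(G_1,\kappa)$, and the commutativity of the square holds tautologically by construction. The main subtlety I anticipate is verifying the orbit-level claim underlying the identification of the essential image of \eqref{e:ewhit}, in particular checking that \eqref{e:sgn} transports the lattice inclusion $\check{\Lambda}_{G_1} \subset \check{\Lambda}_{G_2}$ to the expected inclusion of coset spaces $\widetilde{W}_1/W_f \subset \widetilde{W}_2/W_f$ and that this is compatible with the twisting data.
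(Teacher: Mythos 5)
Your proposal is correct and matches what the paper does: the paper omits an explicit proof (it simply says ``Combining these assertions, we obtain:'') precisely because the argument is to identify the essential images of \eqref{e:ewhit} and \eqref{e:ecato} as the two subcategories in the hypothesis and then restrict, which is what you spell out. Your checks — the lattice inclusion $\check{\Lambda}_{G_1} \subset \check{\Lambda}_{G_2}$ matching the orbits under \eqref{e:sgn}, the kernel of $\check{I}_2 \to \check{I}_1$ being finite hence rationally contractible, and the Verma modules on the dual side pulling back along $\check{T}_2 \to \check{T}_1$ — are exactly the implicit content.
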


\subsection{Products} By the preceding subsection, we may replace our group, 
after passing to a finite quotient thereof, by the product of a semisimple 
group of adjoint type and a torus. We next reduce to the case of a single 
factor.

Suppose that $G$ factors as a product of pinned connected reductive groups $G 
\simeq G_1 \times G_2.$
Associated to this is a tensor product decomposition  
\[ D_\kappa(\mathfrak{L}N, \psi \backslash\fL G/I) \simeq 
D_{\kappa_1}(\fL N_1, \psi_1 \backslash \fL G_1 / I_1) 
\otimes D_{\kappa_2}(\fL 
N_2, \psi_2 \backslash \fL G_2 / I_2).
\]
On the Langlands dual side, we obtain a decomposition $\cG \simeq \cG_1 \times 
\cG_2$, and a similar tensor product decomposition 
\[\cgk\mod^{\check{I}} \simeq \widehat{\check{\mathfrak{g}}}_{1, 
	\kappa_1}\hspace{-1mm}\mod^{\check{I}_1}
\otimes \hspace{1mm} \widehat{\check{\mathfrak{g}}}_{2, 
	\kappa_2}\hspace{-1mm}\mod^{\check{I}_2}.
\]
In particular, to provide an equivalence as in \eqref{e:tfle} for $G_1 
\times G_2$, it is enough to do so for $G_1$ and $G_2$ separately. 

\subsection{Tori}Let us dispose of the torus factor of $G$. Given  dual tori 
$T$ and $\check{T}$, it is clear that both sides of 
\eqref{e:tfle} canonically identify as 
\[ \bigoplus_{\check{\lambda} \in \check{\Lambda}_T} \mathsf{Vect},
\]
corresponding to the components of the affine Grassmannian of $T$ and the Fock 
modules 
for $\widehat{\check{\ft}}_{\check{\kappa}}$, respectively. 

\subsection{Positive level} We have reduced the conjecture to $G$ simple of 
adjoint type, and a $\kappa$ arbitrary. We now reduce to $\kappa$ negative via 
cohomological duality.

For a connected reductive group $G$, there is a canonical 
$D_\kappa(\fL G)$-equivariant 
duality 
\[ D_\kappa(\fL G / I)^\vee \simeq D_{-\kappa}(\fL G / I),
\]
induced by Verdier duality. This yields a duality of the 
Whittaker invariants, cf. Section 4 of \cite{lpw}, i.e., 
\begin{equation} \label{e:whitd}  D_\kappa(\fL N, \psi \backslash 
\fL G / 
I)^\vee \simeq D_{-\kappa}(\fL N, 
-\psi \backslash \fL G / I).
\end{equation}
On the Kac--Moody side, recall that semi-infinite 
cohomology (defined with respect to any compact open subalgebra) induces an 
$D_\kappa(\fL G)$-equivariant duality 
\[ \cgk\mod^\vee \simeq 
\widehat{\check{\mathfrak{g}}}_{-\check{\kappa}}\mod, \]
cf. Section 9 of \cite{mys}.Passing to $\check{I}$ invariants, we 
obtain a duality 
\begin{equation} \label{e:catod} (\cgk\mod^{\check{I}})^\vee \simeq 
\widehat{\check{\mathfrak{g}}}_{-\check{\kappa}}\mod^{\check{I}}. 
\end{equation}
Accordingly, an equivalence as in \eqref{e:tfle} at level $\kappa$ 
follows by duality from such an equivalence at level $-\kappa$.   

We now check the compatibility of the above with the assumption of Proposition 
\ref{p:isog} concerning essential images. For dual categories $\scc$ and 
$\scc^\vee$, let us write $\scc^c$ and $(\scc^\vee)^c$ for their full 
subcategories of compact objects, and denote their induced identification by 
\[ \mathbb{D}: \scc^{c, op} \simeq (\scc^\vee)^c. \]

\begin{lem} Fix any $\check{\lambda}$ in $\check{\Lambda}_G$, and write $\rho$ 
	and $\check{\rho}$ for the half sum of the positive roots and coroots of 
	$G$, 
	respectively. With respect to 
	the duality datum \eqref{e:whitd}, we have
	\[ \mathbb{D} j_{\check{\lambda}, !} \simeq j_{\check{\lambda}, *} [-2 
	\langle \check{\rho}, \rho \rangle].\]
	\noindent With respect to the duality datum \eqref{e:catod}, normalized 
	with 
	respect to 
	the Lie algebra of the Iwahori $\check{I}$, we have 
	\[ \mathbb{D} M_{\check{\lambda}} \simeq M_{- \check{\lambda} - 2 
		\check{\rho}}, \]

\end{lem}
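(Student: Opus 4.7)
The plan is to verify the two asserted identifications independently, as they live on opposite sides of Langlands duality and are governed by different duality data.

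For the Whittaker claim, the duality \eqref{e:whitd} is the descent of Verdier duality on twisted $D$-modules on $\fL G$ to the $\fL N$-Whittaker and $I$-coinvariant quotients; this is precisely why the level flips $\kappa \leftrightarrow -\kappa$ and the character flips $\psi \leftrightarrow -\psi$. Since Verdier duality exchanges $!$- and $*$-pushforwards along each stratum of the Iwasawa stratification of Section~\ref{sss:iwd}, one obtains an equivalence
\[
\mathbb{D}\, j_{\check\lambda,!}^\psi \simeq j_{\check\lambda,*}^\psi[n(\check\lambda)]
\]
for some integer $n(\check\lambda)$. To see that $n(\check\lambda)$ is independent of $\check\lambda$, I would use compatibility of Verdier duality with Hecke convolution, together with the identifications \eqref{e:moves1} that express every $j_{\check\lambda,!}^\psi$ as a Hecke convolution acting on a clean object; the duality on the Hecke side is standard and transports the shift uniformly across cosets. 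The constant is then pinned down at the unique clean orbit, which by \eqref{e:sgn} corresponds to $\check\lambda = -\check\rho$ (so $y=e$ and $j_{-\check\rho,!}^\psi = j_{-\check\rho,*}^\psi$ by Corollary~\ref{c:minel}); a direct computation of the Verdier self-duality shift for this clean sheaf yields the integer $-2\langle\check\rho,\rho\rangle$.

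For the Verma claim, the duality \eqref{e:catod} is by definition semi-infinite cohomology, normalized with respect to $\operatorname{Lie}\check I$ as in Section~9 of \cite{mys}. The relevant computation is classical: via an Arkhipov-type resolution of $\mathbb{C}_{\check\lambda}$, or directly from the definition, the semi-infinite cohomology of the Verma module $M_{\check\lambda}$ is concentrated in a single degree, is one-dimensional, and carries $\check\ft$-weight $-\check\lambda - 2\check\rho$. Here the $-2\check\rho$ is the $\check\ft$-weight of the determinant line on $\operatorname{Lie}\check I/\check\ft$. Dualizing recovers the Verma module $M_{-\check\lambda - 2\check\rho}$ as claimed.

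The main obstacle is not conceptual but one of careful bookkeeping: one must track the normalization of the dualizing complex on the Whittaker side, and the relative Lie subalgebra and cohomological degree in the semi-infinite cohomology on the Kac--Moody side, so that the integers $-2\langle\check\rho,\rho\rangle$ and $-2\check\rho$ emerge exactly as stated (and are compatible under the equivalence of Theorem~\ref{t:fle-tame}, which is implicitly required when the lemma is applied in the proof of Proposition~\ref{p:isog}). Once the clean stratum (respectively, the distinguished cohomological degree) is singled out, both computations reduce to essentially finite-dimensional checks.
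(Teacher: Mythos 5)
Your plan for the Kac--Moody side matches what the paper actually cites: Lemma 9.8 of \cite{lpw} is precisely the semi-infinite cohomology computation you sketch. One imprecision, though: $\operatorname{Lie}\check{I}/\check{\ft}\simeq\check{\fn}\oplus t\check{\fg}[[t]]$ is infinite-dimensional, so its determinant line is not defined without regularization; the $-2\check{\rho}$ comes from the finite piece $\check{\fn}$ (equivalently the relative determinant intrinsic to the semi-infinite complex), not from the full quotient as you state. The conclusion is right but the stated source of the shift doesn't parse.

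On the Whittaker side there is a concrete error and a genuine gap. The error: ``the unique clean orbit'' does not exist. Corollary \ref{c:minel} produces one clean orbit in \emph{every} block of $\Whit_{\kappa}^{\aff}$, one for each double coset in $W_{\fg,-\kappa}\backslash\widetilde{W}/W_f$, and the identifications \eqref{e:moves1} only propagate the shift within a single block. You therefore still have to address constancy across blocks, say via Proposition \ref{p:neutr}, or run a uniform stratum-by-stratum argument. The gap: the ``direct computation'' of the constant at the clean stratum is asserted rather than done, and it is not the naive codimension/length shift one might guess---already for $\fsl_3$ one has $2\langle\check{\rho},\rho\rangle = 4$ while $\ell(w_\circ) = 3$. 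The missing contribution is accounted for by the translation $\check{\rho}(t^{-1})$ built into the identification of Section \ref{sss:indexing}; indeed $2\langle\check{\rho},\rho\rangle=\ell(t^{\check{\rho}})$, which is exactly the length budget that translation introduces. Tracking this translation is where the specific integer comes from, so deferring it as ``bookkeeping'' elides the crux of the Whittaker half of the lemma.
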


The proof of the lemma is straightforward, cf. Lemma 9.8 of \cite{lpw} for the 
assertion regarding Verma modules.

\bibliography{sample}
\bibliographystyle{alpha}

\end{document}